\newcommand{\change}[1]{{\color{black}#1}}
\newcommand{\dt}{\,\mathsf{d}t}
\def\VR{\kern-\arraycolsep\strut\vrule &\kern-\arraycolsep}
\def\vr{\kern-\arraycolsep & \kern-\arraycolsep}
\newcommand{\normal}{{\mathcal N}}
\newcommand{\R}{{\mathbb R}}
\def\trace{\mathop{\rm trace}\nolimits}
\newcommand{\lam}{\lambda}
\newcommand{\Lam}{\Lambda}
\newcommand{\mb}[1]{\left[\begin{array}{#1}}
\newcommand{\me}{\end{array}\right]}
\newcommand{\smb}{\left[\begin{smallmatrix}}
\newcommand{\sme}{\end{smallmatrix}\right]}
\newcommand{\EE}{\mathbb{E}}
\newcommand{\PP}[1]{\mathbb{P}\left\{#1\right\}}
\newcommand{\CDF}{F}
\newcommand{\CHAR}{\varphi}
\numberwithin{equation}{section}
\newtheorem{theorem}{Theorem}[section]
\newtheorem{example}[theorem]{Example}
\newtheorem{lemma}[theorem]{Lemma}
\newtheorem{corollary}[theorem]{Corollary}
\author{
  Zvonimir Bujanovi\'c\thanks{University of Zagreb, Faculty of Science, Dept. of Mathematics, Croatia, {\tt zbujanov@math.hr}. Part of this work was done while the author was
  a postdoctoral researcher at EPF Lausanne, Switzerland.}
\and Daniel Kressner\thanks{Institute of Mathematics, EPF
  Lausanne, Switzerland, {\tt daniel.kressner@epfl.ch}.}
}
\title{Norm and trace estimation with random rank-one vectors}
\begin{document}

\maketitle

\begin{abstract}
A few matrix-vector multiplications with random vectors are often sufficient to obtain reasonably good estimates for the norm of a general matrix or the trace of a symmetric positive semi-definite matrix. Several such probabilistic estimators have been proposed and analyzed for standard Gaussian and Rademacher random vectors. In this work, we consider the use of rank-one random vectors, that is, Kronecker products of (smaller) Gaussian or Rademacher vectors. It is not only cheaper to sample such vectors but it can sometimes also be much cheaper to multiply a matrix with a rank-one vector instead of a general vector.
In this work, theoretical and numerical evidence is given that the use of rank-one instead of unstructured random vectors still leads to good estimates. In particular, it is shown that our rank-one estimators multiplied with a modest constant constitute, with high probability, upper bounds of the quantity of interest. Partial results are provided for the case of lower bounds. The application of our techniques to condition number estimation for matrix functions is illustrated.
\end{abstract}


\section{Introduction}

This work is concerned with estimating the norm of a matrix $A \in \R^{m\times n}$ or the trace of a symmetric positive semi-definite matrix $B \in \R^{n\times n}$, which are given implicitly via matrix-vector products. Given an integer factorization $n = \hat n \cdot \tilde n$, we say that $x \in \R^n$ has rank one if it takes the form $x = \tilde x\otimes \hat x$, where $\tilde x \in \R^{\tilde n}$, $\hat x \in \R^{\hat n}$ are nonzero and $\otimes$ denotes the Kronecker product. Equivalently, the $\hat n\times \tilde n$ matrix obtained from reshaping $x$ has rank one. 
In this work, we specifically target a setting where it is (much) cheaper to multiply with a rank-one vector than with a general vector. 
Short sums of Kronecker products have this property and such matrices arise in a variety of applications; see, e.g.,~\cite{Grasedyck2013,Langville2004}. More intricately, condition number estimation for matrix equations and matrix functions involves linear operators that can often be cheaply applied to rank-one vectors; see Section~\ref{sec:applications} for details. Another situation involving such operators arises in the context of error estimates for low-rank tensor approximation~\cite{KressnerPerisa}.

\begin{paragraph}{Existing work.}
 Norm estimation is a classical topic in matrix analysis; see Chapter 15 in the book by Higham~\cite{Higham2002} for a comprehensive overview.
A single matrix-vector product $Ax$ with a random vector $x$ chosen from a suitable distribution can already give a good first estimate of the norm of $A$.
A classical result by Dixon~\cite{Dixon83} shows that the spectral norm $\|A\|_2$ is bounded by $\theta \|Ax\|_2$ with probability at least $1-0.8\,\sqrt{n}\, \theta^{-1/2}$ if $x$ is distributed uniformly on the unit sphere in $\R^n$. The normalization of $x$ implies that $\|Ax\|_2$ is always a lower bound but it also leads to the unfavorable appearance of $\sqrt{n}$ in the tail probability. The latter can be avoided when choosing $x$ from $\mathcal N(0,I_n)$. This choice of $x$ is called standard Gaussian (or normal) random vector. In this case, Lemma 4.1 in~\cite{Tropp11} states that
\begin{equation} \label{eq:tropp}
 \|A\|_2 \le \theta \|Ax\|_2 \quad \text{holds with probability at least $1-\sqrt{2\pi^{-1}}\, \theta^{-1}$.}
\end{equation}
\change{Note that} the expected value of $\|Ax\|_2^2$ is not $\|A\|^2_2$ but $\|A\|^2_F$, where $\|\cdot\|_F$ denotes the Frobenius norm of a matrix. In turn, one expects that $\|Ax\|_2$ tends to overestimate $\|A\|_2$ by a factor $\|A\|_F/\|A\|_2$, which can be between $1$ and $\sqrt{n}$ depending on the singular value distribution of $A$. \change{For the setting in~\cite{Tropp11}, which considers matrices $A$ that can be well approximated by low-rank matrices, this factor is usually modest.}

There are numerous approaches to go beyond the simple estimate $\|Ax\|_2$ and construct estimators that improve upon~\eqref{eq:tropp}. A straightforward modification suggested, e.g., in~\cite{Tropp11} is to choose $k\ge 2$ independent Gaussian vectors $x_1,\ldots,x_k$ and return the maximum estimator $\max \{ \|Ax_1\|_2,\ldots, \|A x_k\|_2\}$. On the one hand, this increases the success probability in~\eqref{eq:tropp} to $1-\big( \sqrt{2\pi^{-1}}\, \theta^{-1} \big)^k$, but, on the other hand, this also increases the risk of overestimation.

A second approach starts with the observation that $\|A\|^2_F = \text{trace}(B)$, where $B = A^T A$, which suggests the use of the stochastic trace estimator
\[
 \mathsf{Est}_k = \frac1k \sum_{i = 1}^k x_i^T B x_i,
\]
going back to Hutchinson~\cite{Hutchinson1990}.
Because $\mathsf{Est}_k$ is a sub-exponential random variable with mean $\text{trace}(B)$, one can apply Chernoff bounds to obtain concentration inequalities that yield relative upper and lower bounds~\cite{Avron2011,Gratton2018,Roosta2015}. For example, the proof of Corollary 3.3 in~\cite{Gratton2018} establishes 
\begin{align} \label{eq:uppertrace}
  \text{trace}(B) & \le \frac{\mathsf{Est}_k}{1-\varepsilon} \quad \text{holds with probability at least $1-\exp(-k\rho\varepsilon^2/4)$ for all $0<\varepsilon < 1$,} \\
 \label{eq:lowertrace}
  \text{trace}(B) & \ge \frac{\mathsf{Est}_k}{1+\varepsilon} \quad \text{holds with probability at least $1-\exp(-k\rho\varepsilon^2/2)$ for all $0< \varepsilon$,}
\end{align}
with the stable rank $\rho = \|B\|_F^2 / \|B\|_2^2$. One disadvantage of~\eqref{eq:uppertrace} compared to~\eqref{eq:tropp} is that the bound on the failure probability for fixed $k$ does not converge to $0$ when loosening the upper bound (that is, when letting $\varepsilon \to 1$). For example, for $\rho = 1$, the success probability cannot be larger than $1-\exp(-k/4)\approx 1-0.78^k$. This issue is addressed in~\cite{Gratton2018} by a combination with the techniques from~\cite{Gudmundsson1995}.

The entries of a Rademacher vector are independent random variables that are either $1$ or $-1$, both with
probability $1/2$. \change{As shown in~\cite[Theorem 1]{Roosta2015}, each of the bounds~\eqref{eq:uppertrace} and~\eqref{eq:lowertrace} holds for independent Rademacher vectors $x_1,\ldots,x_k$ with probability at least $1-\exp(-k( \varepsilon^2/4 - \varepsilon^3/6))$. A more recent result~\cite[Corollary 13]{Cortinovis2020} establishes that both bounds hold jointly with probability at least $1-2\exp(-k\rho\varepsilon^2 (1+\varepsilon)^{-1}/8)$.}

A third approach to improve~\eqref{eq:tropp} is to apply a few steps of the power or Lanczos method to $B$ with a standard Gaussian starting vector; see, e.g.,~\cite{Dixon83,Hochstenbach2013,Kuczynski1992}. We will not consider such an approach in this work because repeated application of $B$ to the same vector makes it difficult to benefit from rank-one structure of the starting vector.
\end{paragraph}

\begin{paragraph}{New results.}
In this work, we develop and analyze estimators that operate with vectors $\tilde x\otimes \hat x$, where $\tilde x$ and $\hat x$ are independent random vectors. A simple calculation shows that $\|A(\tilde x\otimes \hat x)\|_2^2$ retains the property of being an unbiased estimator of $\|A\|_F^2$ for common choices of distributions for the entries of $\tilde x,\hat x$; see Lemma~\ref{lemma:expectedvalue}. For the specific case that both $\tilde x,\hat x$ are standard Gaussian vectors, Theorem~\ref{tm:norm2-left-tail}, one of our main results, shows that
\begin{equation} \label{eq:troppkron}
 \|A\|_2 \le \theta \|A(\tilde x\otimes \hat x)\|_2 \quad \text{holds with probability at least $1-2 \pi^{-1} ( 2 + \ln(1 + 2\theta) )\theta^{-1}$.}
\end{equation}
Compared to~\eqref{eq:tropp}, the success probability only becomes slightly worse. Using the maximum estimator mentioned above, this probability can be easily improved, while the risk of overestimation can be quantified with the lower bounds from Theorems~\ref{tm:norm2-right-tail} and~\ref{tm:normF-right-tail}. We also explain why it is not possible to have a result of the form~\eqref{eq:troppkron} when $\tilde x,\hat x$ are Rademacher vectors. In contrast, an upper bound of the form~\eqref{eq:uppertrace} holds for rank-one standard Gaussian \emph{and} Rademacher vectors, see Theorem~\ref{tm:traceest}. \change{We also establish lower bounds of the form~\eqref{eq:lowertrace}, see Theorems~\ref{tm:tracelowerrademacher} and~\ref{tm:tracelower}, but they come with additional unfavorable factors in the exponent.}

We would like to stress that the rank-one structure significantly complicates the theory because the techniques used in existing work on norm and trace estimation do not carry over. For example, when $x,\tilde x,\hat x$ are standard Gaussian vectors the distribution of $x$ is invariant under orthogonal transformations but the distribution of $\tilde x\otimes \hat x$ is not. The random variable $\|Ax\|_2^2$ is sub-exponential but this property, which is frequently assumed in concentration inequalities, is not enjoyed by $\|A(\tilde x\otimes \hat x)\|^2_2$.

\change{Random} rank-one vectors/measurements have been used and analyzed in compressed sensing, particularly in the context of matrix recovery~\cite{Cai2015,Chen2015,Kliesch2016,Kueng2017} and phase retrieval~\cite{Tropp2015}, \change{as well as in dimensionality reduction~\cite{Jin2019,Sun2018}. While much of the existing analyses cannot directly be used to address the questions discussed in this paper, a notable exception is recent work by Versyhnin~\cite{Ver19}; see Section~\ref{sec:largesample}. }
\end{paragraph}


\section{Small-sample estimation} \label{sec:theory}

Hutchinson~\cite{Hutchinson1990} has shown that $\|Ax\|_2^2$ is an unbiased estimator of $\|A\|_F^2$ if $x$ contains independent
random variables with mean zero and variance one, which includes standard normal and Rademacher random variables. The following lemma extends this result to rank-one vectors.

\begin{lemma} \label{lemma:expectedvalue}
 Let $A \in \R^{m\times n}$, $n = \hat n \cdot \tilde n$, and let  $\hat x$, $\tilde x$ be random vectors that contain $\hat n+\tilde n$ independent random variables with mean zero and variance one. Then $\EE\big[ \|A(\tilde x\otimes \hat x)\|_2^2 \big] = \|A\|_F^2$.
\end{lemma}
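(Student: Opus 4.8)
The plan is to expand the squared norm entrywise and exploit linearity of expectation, reducing everything to second moments of the entries of $\tilde x \otimes \hat x$. First I would write $\|A(\tilde x\otimes\hat x)\|_2^2 = (\tilde x\otimes\hat x)^T A^T A (\tilde x\otimes\hat x)$ and set $M = A^TA \in \R^{n\times n}$, so that $\|A\|_F^2 = \trace(M)$. Indexing the $n = \hat n\cdot\tilde n$ coordinates by pairs $(i,j)$ with $1\le i\le \tilde n$, $1\le j\le \hat n$ (so that the $(i,j)$ entry of $\tilde x\otimes\hat x$ is $\tilde x_i \hat x_j$), the quadratic form becomes
\[
 (\tilde x\otimes\hat x)^T M (\tilde x\otimes\hat x) = \sum_{i,i'=1}^{\tilde n}\sum_{j,j'=1}^{\hat n} M_{(i,j),(i',j')}\, \tilde x_i \tilde x_{i'} \hat x_j \hat x_{j'}.
\]
Taking expectations and using independence of all $\hat n + \tilde n$ scalar variables, the factor $\EE[\tilde x_i\tilde x_{i'}\hat x_j\hat x_{j'}] = \EE[\tilde x_i\tilde x_{i'}]\,\EE[\hat x_j\hat x_{j'}]$ vanishes unless $i=i'$ and $j=j'$, in which case it equals $\EE[\tilde x_i^2]\,\EE[\hat x_j^2] = 1\cdot 1 = 1$ by the mean-zero, variance-one assumption.

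Hence only the diagonal terms survive, giving $\EE\big[(\tilde x\otimes\hat x)^T M (\tilde x\otimes\hat x)\big] = \sum_{i=1}^{\tilde n}\sum_{j=1}^{\hat n} M_{(i,j),(i,j)} = \trace(M) = \|A\|_F^2$, which is the claim. (Equivalently, one can phrase the middle step as $\EE[(\tilde x\otimes\hat x)(\tilde x\otimes\hat x)^T] = \EE[\tilde x\tilde x^T]\otimes\EE[\hat x\hat x^T] = I_{\tilde n}\otimes I_{\hat n} = I_n$ and then use $\EE[z^TMz] = \trace(M\,\EE[zz^T])$ for $z = \tilde x\otimes\hat x$.)

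There is essentially no obstacle here: the only thing to be careful about is the bookkeeping of the Kronecker indexing and the clean separation of the expectation into a product over the $\tilde x$-block and the $\hat x$-block, which is exactly where independence between the two vectors (not just within each) is used. No higher moments of the entries are needed, so the argument applies uniformly to Gaussian and Rademacher entries, and indeed to any distribution with mean zero and unit variance.
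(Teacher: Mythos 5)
Your proof is correct, but it takes a different route from the paper's. You work with the full quadratic form $(\tilde x\otimes\hat x)^T A^TA\,(\tilde x\otimes\hat x)$ and expand it entrywise in the Kronecker-indexed coordinates, killing all cross terms via $\EE[\tilde x_i\tilde x_{i'}]\,\EE[\hat x_j\hat x_{j'}]=\delta_{ii'}\delta_{jj'}$; equivalently, as you note, $\EE\big[(\tilde x\otimes\hat x)(\tilde x\otimes\hat x)^T\big]=I_{\tilde n}\otimes I_{\hat n}=I_n$ followed by $\EE[z^TMz]=\trace\!\big(M\,\EE[zz^T]\big)$. The paper instead decomposes $\|A(\tilde x\otimes\hat x)\|_2^2=\sum_{i=1}^m(\hat x^TA_i\tilde x)^2$, where $A_i\in\R^{\hat n\times\tilde n}$ is the $i$th row of $A$ reshaped into a matrix, and then evaluates $\EE[(\hat x^TA_i\tilde x)^2]=\|A_i\|_F^2$ by a two-stage conditional expectation (first over $\hat x$ given $\tilde x$, using $\EE[\hat x\hat x^T]=I_{\hat n}$, then over $\tilde x$). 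Both arguments use only second moments and the independence between the two blocks, so they are equally general (Gaussian, Rademacher, or any mean-zero unit-variance entries). Your version is arguably the more direct one-line computation; the paper's version has the side benefit of introducing the reshaping device $x^Tu=\hat x^TU\tilde x$ for $u=\mathrm{vec}(U)$, which is the workhorse of the subsequent proofs (e.g., Theorem~\ref{tm:norm2-left-tail}), so the extra structure there is not wasted.
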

\begin{proof}
We let $a_i \in \R^{\hat n \tilde n}$ denote the $i$th column of $A^T$ and reshape $a_i$ into the $\hat n \times \tilde n$ matrix $A_i$, that is, $\text{vec}(A_i) = a_i$. By standard properties of the Kronecker product~\cite{Golub2013},
\begin{equation} \label{eq:decompblabla}
 \|A(\tilde x\otimes \hat x)\|_2^2 = \sum_{i = 1}^m ( (\tilde x\otimes \hat x)^T a_i  )^2 = \sum_{i = 1}^m ( \hat x^T A_i \tilde x)^2.
\end{equation}
Because of the assumptions on $\hat x$ we have $\EE[\hat x\hat x^T] = I_{\hat n}$ and, therefore,
\begin{align*}
 \EE[( \hat x^T A_i \tilde x)^2\big] & = \EE\big[ \tilde x^T A_i^T \hat x \hat x^T A_i \tilde x  \big] 
 = \EE_{\tilde x}\big[ \EE_{\hat x} \big[ \tilde x^T A_i^T \hat x \hat x^T A_i \tilde x  \mid \tilde x \big]\big] \\
 &= \EE_{\tilde x}\big[ \tilde x^T A_i^T  A_i \tilde x \big] = \EE_{\tilde x}\big[ \|A_i \tilde x\|_2^2 \big] = \|A_i\|_F^2.
\end{align*}
Inserted into~\eqref{eq:decompblabla}, we obtain
$\EE\big[ \|A(\tilde x\otimes \hat x)\|_2^2 \big] = \sum \|A_i\|_F^2 = \sum \|a_i\|_2^2 = \|A\|_F^2$.
\end{proof}


\subsection{One-sample estimation: Upper bound for rank-one Gaussian vectors} \label{sec:uppergauss}

We proceed with our first main result, which bounds the risk of underestimating the spectral norm when using a single Kronecker product of standard Gaussian random vectors. Note that the derivation of our result relies on an anti-concentration inequality that exploits specific properties of the distribution.


\begin{theorem}
	\label{tm:norm2-left-tail}
	Let $A \in \R^{m \times n}$ and $n = \hat{n} \cdot \tilde{n}$.
	Suppose that $\hat{x} \sim \normal(0, I_{\hat{n}})$ and $\tilde{x} \sim \normal(0, I_{\tilde{n}})$, and let $\theta > 1$. Then the inequality
	\begin{equation}
		\label{tm:norm2-left-tail:claim}
		\|A\|_2 \leq \theta \|A (\tilde{x} \otimes \hat{x})\|_2
	\end{equation}
	holds with probability at least $1 - \frac{2}{\pi} \left( 2 + \ln(1 + 2\theta) \right) \theta^{-1}$.
\end{theorem}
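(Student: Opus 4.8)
The goal is to bound the probability that $\|A(\tilde x\otimes\hat x)\|_2$ is much smaller than $\|A\|_2$. I would start by reducing to the rank-one case. Write the SVD $A = U\Sigma V^T$ and note $\|A(\tilde x\otimes\hat x)\|_2 = \|\Sigma V^T(\tilde x\otimes\hat x)\|_2 \ge \sigma_1 |v_1^T(\tilde x\otimes\hat x)|$, where $\sigma_1 = \|A\|_2$ and $v_1$ is the leading right singular vector. Hence it suffices to show
\[
\PP{ |v_1^T(\tilde x\otimes\hat x)| \le 1/\theta } \le \frac{2}{\pi}\bigl(2 + \ln(1+2\theta)\bigr)\theta^{-1}.
\]
Reshaping $v_1$ into an $\hat n\times\tilde n$ matrix $M$ with $\|M\|_F = 1$, the quantity becomes $\hat x^T M\tilde x$ (as in \eqref{eq:decompblabla}). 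Taking a further SVD of $M$, say $M = \sum_j \mu_j p_j q_j^T$ with $\sum\mu_j^2 = 1$, and using orthogonal invariance of the Gaussian vectors $\hat x,\tilde x$ \emph{individually} (this is allowed — we rotate $\hat x$ and $\tilde x$ separately, which is exactly what the Kronecker structure permits), we get $\hat x^T M\tilde x \stackrel{d}{=} \sum_j \mu_j g_j h_j$ where $g_j,h_j$ are all independent standard Gaussians. So the core estimate is an anti-concentration bound: $\PP{ |\sum_j \mu_j g_j h_j| \le t } \le C(t)$ with the right dependence on $t = 1/\theta$.

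**The anti-concentration step.** This is the heart of the matter and the step I expect to be the main obstacle, since, as the authors themselves stress, $\hat x^T M\tilde x$ is not sub-exponential and the usual Gaussian tricks (rotation invariance of the full vector, sub-exponential concentration) do not apply. The plan is to condition on $\tilde x$: given $\tilde x$, the map $\hat x\mapsto \hat x^T M\tilde x$ is a linear functional with coefficient vector $M\tilde x$, so conditionally $\hat x^T M\tilde x \sim \normal(0, \|M\tilde x\|_2^2)$. A one-dimensional Gaussian of variance $s^2$ satisfies $\PP{|N| \le t} \le \sqrt{2/\pi}\, t/s$. Therefore
\[
\PP{ |\hat x^T M\tilde x| \le t } \le \EE_{\tilde x}\Bigl[ \min\bigl\{ 1, \sqrt{2\pi^{-1}}\, t/\|M\tilde x\|_2 \bigr\} \Bigr].
\]
Now $\|M\tilde x\|_2^2 = \tilde x^T M^T M\tilde x = \sum_j \mu_j^2 h_j^2$ with $\sum\mu_j^2 = 1$; in particular $\EE\|M\tilde x\|_2^2 = 1$, but $\|M\tilde x\|_2$ can be small. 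The remaining work is to bound $\EE[\min\{1, a/\|M\tilde x\|_2\}]$ with $a = \sqrt{2\pi^{-1}}\,t$, uniformly over all choices of $\{\mu_j\}$ on the unit sphere. The worst case is $M$ rank one, i.e. $\|M\tilde x\|_2 = |h_1|$ a standard Gaussian; then $\EE[\min\{1, a/|h_1|\}] = \PP{|h_1|\le a} + a\,\EE[|h_1|^{-1}\mathbf 1_{|h_1|>a}]$. The density of $|h_1|$ near $0$ is $\sqrt{2/\pi}$, so $\PP{|h_1|\le a}\approx \sqrt{2/\pi}\,a$ and $a\int_a^\infty x^{-1}\sqrt{2/\pi}e^{-x^2/2}\,dx \approx \sqrt{2/\pi}\,a(\ln(1/a) + \text{const})$, which produces the logarithmic factor $\ln(1+2\theta)$ after substituting $a = \sqrt{2\pi^{-1}}/\theta$ and tracking constants carefully. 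One needs a clean argument (a monotonicity/convexity comparison, or a direct bound $\PP{\|M\tilde x\|_2 \le u} \le \PP{|h_{j^\star}| \le u}$ for the largest weight $\mu_{j^\star}\ge$ something, or simply $\|M\tilde x\|_2 \ge \mu_1|h_1|$ for each fixed $j$) to show the rank-one $M$ is genuinely the worst case, or at least to get a bound of the stated form for all $M$.

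**Assembling the constants.** Once the bound $\PP{|\hat x^T M\tilde x| \le t} \le \sqrt{2\pi^{-1}}\,t\bigl(\text{something like } 2 + \ln(1 + \sqrt{2\pi^{-1}}^{-1}t^{-1})\bigr)$ is in hand, set $t = 1/\theta$ and simplify the logarithm to the cleaner $\ln(1+2\theta)$ (using $\theta>1$ and $\sqrt{2\pi^{-1}} < 1$ to absorb the discrepancy into the additive constant). Combining with the SVD reduction from the first paragraph, the failure probability of \eqref{tm:norm2-left-tail:claim} is at most $\frac{2}{\pi}(2 + \ln(1+2\theta))\theta^{-1}$, which is the claim. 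The only delicate points are (i) justifying that reducing $A$ to its top singular pair only loses a factor we can afford — here we genuinely discard the other singular values, which is fine because we only need a \emph{lower} bound on $\|A(\tilde x\otimes\hat x)\|_2$ — and (ii) the careful constant-tracking in the integral $\int_a^\infty x^{-1}e^{-x^2/2}dx$, for which splitting at $x=1$ (bounding $e^{-x^2/2}\le 1$ on $[a,1]$ to get $\ln(1/a)$, and $x^{-1}\le 1$ on $[1,\infty)$ to get a constant) gives exactly the "$2 + \ln(\cdots)$" shape.
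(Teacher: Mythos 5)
Your opening reduction coincides exactly with the paper's: discard all but the leading singular pair of $A$, reshape the leading singular vector into $M$ with $\|M\|_F=1$, take the SVD of $M$, and use the separate rotation invariance of $\hat x$ and $\tilde x$ to reduce everything to the anti-concentration of $X=\sum_j\mu_j g_j h_j$ with $\sum_j\mu_j^2=1$. From that point on you diverge. The paper exploits the independence of the summands $\mu_j g_j h_j$ to write the characteristic function of $X$ as $\prod_j(1+\mu_j^2t^2)^{-1/2}$, observes the elementary pointwise bound $\prod_j(1+\mu_j^2t^2)^{-1/2}\le(1+t^2)^{-1/2}$ (so the worst-case comparison with the rank-one case is free), and then applies L\'evy's inversion formula and splits the resulting oscillatory integral at $t=\theta$ to land exactly on $\frac{2}{\pi}\theta^{-1}(2+\ln(1+2\theta))$. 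Your route instead conditions on $\tilde x$ and uses the one-dimensional bound $\PP{|\normal(0,s^2)|\le t}\le\sqrt{2/\pi}\,t/s$, which is a legitimate and arguably more probabilistic alternative.

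The genuine gap is the step you yourself flag: you need a \emph{uniform, dimension-free} small-ball estimate $\PP{\|M\tilde x\|_2\le u}\le C u$ for all $M$ with $\|M\|_F=1$, with $C$ close to $\sqrt{2/\pi}$, and none of the fallbacks you sketch delivers it. The direct bound $\|M\tilde x\|_2\ge\mu_{j^\star}|h_{j^\star}|$ only gives $\PP{\|M\tilde x\|_2\le u}\le\sqrt{2/\pi}\,u/\mu_{\max}$, and $\mu_{\max}$ can be as small as $r^{-1/2}$ with $r=\min\{\hat n,\tilde n\}$, which would inject an inadmissible $\sqrt{r}$ into the final bound. The clean claim ``rank-one $M$ is stochastically worst'' is in fact \emph{false} as a statement about the full distribution of $\|M\tilde x\|_2$ (for equal weights and $r\to\infty$ one has $\|M\tilde x\|_2^2\to1$ a.s., so $\PP{\|M\tilde x\|_2\le u}\to1>\PP{|h_1|\le u}$ for any $u>1$); it is only the small-$u$ regime that behaves as you expect, and isolating that regime still requires a proof. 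A Chernoff/Markov bound on $\sum_j\mu_j^2h_j^2$ does give a dimension-free $\PP{\|M\tilde x\|_2\le u}\le\sqrt{e}\,u$, but $\sqrt{e}\approx1.65>\sqrt{2/\pi}\approx0.80$, and propagating this through your computation of $\EE[\min\{1,a/\|M\tilde x\|_2\}]$ roughly doubles the leading constant, so the specific bound $\frac{2}{\pi}(2+\ln(1+2\theta))\theta^{-1}$ claimed in the theorem would not be recovered. So the architecture of your argument is sound and would yield a bound of the correct shape $C\theta^{-1}(C'+\ln\theta)$, but the central anti-concentration lemma is missing, and the constants as stated are not justified; the paper's characteristic-function route is precisely the device that makes this comparison trivial.
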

\begin{proof}
	Let $x = \tilde{x} \otimes \hat{x}$. We will bound the probability that~\eqref{tm:norm2-left-tail:claim} fails:
	$$
		\PP{ \|A\|_2 > \theta \|Ax\|_2 } = \PP{ \|A\|^2_2 > \theta^2 \|Ax\|^2_2 } = \PP{ \|A^T A\|_2 > \theta^2 x^T A^T A x }.
	$$
	Consider the spectral decomposition
	$
		A^T A = U \Lam U^T = \lambda_1 u_1 u_1^T + \cdots + \lambda_n u_n u_n^T, \; \lambda_1 \ge \cdots \ge \lambda_n\ge 0,
	$
	and note that
	$$
		\theta^2 x^T A^T A x
			= \theta^2 \sum_{j=1}^n \lam_j ( x^Tu_j )^2
			\geq \theta^2 \lam_1 ( x^Tu_1 )^2.
	$$
	Therefore, if $\|A^T A\|_2 > \theta^2 x^T A^T A x$, then $\|A^T A\|_2 > \theta^2 \lam_1 ( x^Tu_1 )^2$, and, since $\|A^T A\|_2 = \lam_1$, we have
	\begin{equation}
		\label{tm:norm2-left-tail:eq1}
		\PP{ \|A^T A\|_2 > \theta^2 x^T A^T A x } \leq \PP{ \lam_1 > \theta^2 \lam_1 ( x^T \change{u_1} )^2 } = \PP{ ( x^Tu_1 )^2 < \frac{1}{\theta^2}}.
	\end{equation}
	To bound the last probability, we need to control the random variable $X:=x^Tu_1$. For this purpose, reshape the vector $u_1\in \R^{\hat n \tilde n}$ as a matrix: $u_1 = \text{vec}(U_1)$ with $U_1\in \R^{\hat n \times \tilde n}$,
	and consider its singular value decomposition
	$$
		U_1 = \hat V \Sigma \tilde V^T,
		\;
		\hat V = \begin{bmatrix} \hat v_1,\ldots, \hat v_r \end{bmatrix} \in \R^{\hat{n} \times r},
		\;
		\tilde V = \begin{bmatrix} \tilde v_1, \ldots, \tilde v_r \end{bmatrix} \in \R^{\tilde{n} \times r},
		\;
		\Sigma = \text{diag}(\sigma_1, \ldots, \sigma_r) \in \R^{r \times r},
	$$
	with $r = \min\{ \hat n, \tilde n\}$ and $\sum_{i=1}^r \sigma_i^2 = \|U_1\|_F^2 = \|u_1\|_2^2 = 1$. Using $x = \tilde{x} \otimes \hat{x} = \text{vec}(\hat x \tilde x^T)$, we obtain
	$$
		x^Tu_1
			= \text{vec}(\hat x \tilde x^T)^T \text{vec}(U_1)
			= \text{trace}( \tilde x \hat x^T U_1 )
			= \hat x^T U_1 \tilde x
			= \sum_{i = 1}^r \sigma_i \hat x_i \tilde x_i,
	$$
	where $\hat x_i := \hat v_i^T \hat x$ and $\tilde x_i := \tilde v_i^T \tilde x$ are independent standard normal random variables, because $\hat{V}^T \hat{x} \sim \normal(0, I_r)$ and $\tilde{V}^T \tilde{x} \sim \normal(0, I_r)$ follow from the orthogonality of $\hat{V}$ and $\tilde{V}$. For a random variable $Z$, let $\CDF_Z$ denote its cumulative distribution function, and let $\CHAR_Z(t)$ denote its characteristic function.
	The characteristic function for the product of two \change{independent} standard normal random variables is given by $1/\sqrt{1+t^2}$ and hence
    \[
		\CHAR_{\sigma_i \hat{x}_i \tilde{x}_i}(t) = \frac{1}{\sqrt{1+ \sigma_i^2 t^2}}.
	\]
	Since $X=\sum_{i = 1}^r \sigma_i \hat x_i \tilde x_i$ is a sum of independent random variables, its characteristic function is given by
	$$
		\CHAR_X(t) = \prod_{i=1}^r \frac{1}{\sqrt{1 + \sigma_i^2 t^2}}.
	$$
	We now apply L\'{e}vy's theorem, see, e.g., \cite[Corollary 2]{Shephard1991},
	to reformulate~\eqref{tm:norm2-left-tail:eq1} in terms of characteristic functions:
	\begin{align}
		\PP{( x^Tu_1 )^2 < \frac{1}{\theta^2}}
			&= \PP{\frac{-1}{\theta} <  x^Tu_1 < \frac{1}{\theta}}
			= \CDF_X\left( \frac{1}{\theta} \right) - \CDF_X\left( \frac{-1}{\theta} \right) \nonumber \\
			&= \frac{1}{\pi} \int_{-\infty}^{\infty} \frac{\sin(t / \theta)}{t} \CHAR_X(t) \dt
			= \frac{2}{\pi} \int_{0}^{\infty} \frac{\sin(t / \theta)}{t} \prod_{i=1}^r \frac{1}{\sqrt{1 + \sigma_i^2 t^2}} \dt. \label{tm:norm2-left-tail:eq_integralr}
	\end{align}
	To bound this oscillatory integral, we use $\sin(t / \theta) \leq t / \theta$ for $t \in [0, \theta]$ and $\sin(t / \theta) \leq 1$ elsewhere. Also, using $\sum_{i=1}^r \sigma_i^2 = 1$, note that
	$$
		\prod_{i=1}^r \frac{1}{\sqrt{1 + \sigma_i^2 t^2}}
			= \frac{1}{\sqrt{1 + \sum_{i=1}^r \sigma_i^2 t^2 + \text{positive terms}}}
			\leq \frac{1}{\sqrt{1 + t^2}}.
	$$
	This gives
	\begin{align*}
		 \int_{0}^{\infty} \frac{\sin(t / \theta)}{t}
			&\prod_{i=1}^r \frac{1}{\sqrt{1 + \sigma_i^2 t^2}} \dt \\
			&=  \int_{0}^{\theta} \frac{\sin(t / \theta)}{t} \prod_{i=1}^r \frac{1}{\sqrt{1 + \sigma_i^2 t^2}} \dt
			 +  \int_{\theta}^{\infty} \frac{\sin(t / \theta)}{t} \prod_{i=1}^r \frac{1}{\sqrt{1 + \sigma_i^2 t^2}} \dt \\
			& \leq  \int_{0}^{\theta} \frac{t / \theta}{t} \prod_{i=1}^r \frac{1}{\sqrt{1 + \sigma_i^2 t^2}} \dt
			 +  \int_{\theta}^{\infty} \frac{1}{t} \prod_{i=1}^r \frac{1}{\sqrt{1 + \sigma_i^2 t^2}} \dt \\
			& \leq  \int_{0}^{\theta} \frac{1}{\theta} \frac{1}{\sqrt{1 + t^2}} \dt
			 +  \int_{\theta}^{\infty} \frac{1}{t} \frac{1}{\sqrt{1 + t^2}} \dt \\
			& =  \theta^{-1} \ln(\theta + \sqrt{1+\theta^2})
			 + \ln \left( \theta^{-1} + \sqrt{1 + \theta^{-2}} \right)  \\
			& \leq  \theta^{-1} \ln(1 + 2\theta)
			 + \ln \left( 1 + 2 \theta^{-1} \right)
			\leq \theta^{-1} \ln(1 + 2\theta)
			 + 2 \theta^{-1}   \\
			& = \theta^{-1} \left( 2 + \ln(1 + 2\theta) \right),
	\end{align*}
	which, when inserted into~\eqref{tm:norm2-left-tail:eq_integralr}, implies the claim of the theorem.
\end{proof}

Comparing the bounds~\eqref{tm:norm2-left-tail:claim} and~\eqref{eq:tropp},
the use of rank-one Gaussian vectors instead of Gaussian vectors comes with a slight penalty. The following table shows the bounds for the failure probability $\mathcal{P} := \PP{ \|A\|_2 > \theta \|A x\|_2 }$ for different values of $\theta$:
	\begin{center}
		\begin{tabular}{|c|c|c|}
			\hline
			$\theta$ &
				\begin{tabular}[x]{@{}c@{}}Gaussian vectors: \\ $\mathcal{P} \leq \sqrt{\frac{2}{\pi}} \cdot \frac{1}{\theta}$ \end{tabular} &
				\begin{tabular}[x]{@{}c@{}}Rank-one Gaussian vectors: \\ $\mathcal{P} \leq \frac{2}{\pi} \cdot \frac{1}{\theta} \cdot \left( 2 + \ln(1 + 2\theta) \right)$ \end{tabular} \\ \hline
			5        &  0.159577   & 0.559957  \\
			10       &  0.079788   & 0.321144  \\
			20       &  0.039894   & 0.181869  \\
			\change{30}       &  \change{0.026596}   & \change{0.129677}  \\
			50       &  0.015957   & 0.084226  \\ \hline
		\end{tabular}
	\end{center}
\change{Note that taking larger values of $\theta$, such as $30$ or $50$, still makes sense for applications where one is interested in obtaining only a rough estimate of the matrix norm, such as the one described in Section \ref{sec:frechet}.}

Quite trivially, Theorem \ref{tm:norm2-left-tail} can also be applied to the Frobenius norm $\|A\|_F$.
Multiplying both sides of \eqref{tm:norm2-left-tail:claim} by $\sqrt{\rho} = \|A\|_F / \|A\|_2$, it follows that
\begin{equation}
	\label{eq:normF-left-tail-stable-rank}
	\PP{ \|A\|_F \leq \theta \sqrt{\rho} \cdot \|A (\tilde{x} \otimes \hat{x})\|_2 }
		\geq 1 - \frac{2}{\pi} \left( 2 + \ln(1 + 2\theta) \right) \theta^{-1}.
\end{equation}
Given that $1 \leq \rho \leq n$, this bound suggests that, as the stable rank of $A$ increases, $\|A(\tilde{x} \otimes \hat{x})\|_2$ needs to be multiplied with a larger constant in order to remain a reliable upper bound for $\|A\|_F$. However, this seems to be entirely an artifact of our derivation of the bound. The numerical experiments in Section~\ref{example:numexp} below demonstrate that \eqref{eq:normF-left-tail-stable-rank} tends to be tight when the stable rank is small but is overly pessimistic when the stable rank is large.



\subsection{One-sample estimation: Lower bounds for rank-one Gaussian vectors} \label{sec:lowergauss}

As discussed in the introduction, the result of Lemma~\ref{lemma:expectedvalue} indicates that $\|A (\tilde{x} \otimes \hat{x}) \|_2$ tends to overestimate
$\|A\|_2$. The following result guarantees that an overestimation by a factor much larger than $\sqrt{n}$ is unlikely.
\begin{theorem}
	\label{tm:norm2-right-tail}
	Let $A \in \R^{m \times n}$, and $n = \hat{n} \cdot \tilde{n}$ with $\tilde n \le \hat n$. Suppose that $\hat{x} \sim \normal(0, I_{\hat{n}})$ and $\tilde{x} \sim \normal(0, I_{\tilde{n}})$, and let $\theta > 1$. The inequality
	\begin{equation}
		\label{tm:norm2-right-tail:claim}
		\|A\|_2 \ge n^{-1/2} \theta^{-1} \|A (\tilde{x} \otimes \hat{x}) \|_2
	\end{equation}
	holds with probability at least $1 - 2 e^{-\tilde n (\theta - \ln\theta - 1)/2}$.
\end{theorem}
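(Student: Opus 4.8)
The plan is to bound the probability that $\|A(\tilde x\otimes\hat x)\|_2$ exceeds $n^{1/2}\theta\|A\|_2$ by controlling the norm of the rank-one vector $x=\tilde x\otimes\hat x$ itself, since $\|A x\|_2\le\|A\|_2\|x\|_2$ always holds. Thus it suffices to show that $\|x\|_2=\|\tilde x\|_2\|\hat x\|_2 > n^{1/2}\theta$ happens with probability at most $2e^{-\tilde n(\theta-\ln\theta-1)/2}$. Since $\|\tilde x\|_2^2\sim\chi^2_{\tilde n}$ and $\|\hat x\|_2^2\sim\chi^2_{\hat n}$ are independent, and $n=\hat n\tilde n$, the event $\|\tilde x\|_2^2\|\hat x\|_2^2 > n\theta^2 = \hat n\tilde n\theta^2$ is implied by the union of $\{\|\tilde x\|_2^2 > \tilde n\theta\}$ and $\{\|\hat x\|_2^2 > \hat n\theta\}$: if both chi-square variables stay below $\tilde n\theta$ and $\hat n\theta$ respectively, their product stays below $\hat n\tilde n\theta^2$. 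So by a union bound the failure probability is at most $\PP{\|\tilde x\|_2^2 > \tilde n\theta} + \PP{\|\hat x\|_2^2 > \hat n\theta}$.

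Next I would apply a standard upper-tail Chernoff bound for the chi-square distribution. For $Y\sim\chi^2_d$ and $\theta>1$ one has $\PP{Y > d\theta}\le e^{-d(\theta-\ln\theta-1)/2}$; this follows from optimizing $e^{-td\theta}\,\EE[e^{tY}] = e^{-td\theta}(1-2t)^{-d/2}$ over $t\in(0,1/2)$, with the optimum at $t=\tfrac12(1-1/\theta)$ giving exactly the stated rate. Applying this with $d=\tilde n$ and $d=\hat n$ gives the two terms $e^{-\tilde n(\theta-\ln\theta-1)/2}$ and $e^{-\hat n(\theta-\ln\theta-1)/2}$. Because the map $\theta\mapsto\theta-\ln\theta-1$ is nonnegative for $\theta>1$ and $\tilde n\le\hat n$, the second term is bounded by the first, so the sum is at most $2e^{-\tilde n(\theta-\ln\theta-1)/2}$, which is the claim.

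There is essentially no serious obstacle here: the argument is a clean union-bound-plus-Chernoff estimate, and the only thing to get right is the bookkeeping that makes the $\tilde n$ (the smaller of the two factors, by the hypothesis $\tilde n\le\hat n$) appear in the exponent rather than $\hat n$ — this is where the ordering assumption is used, and it is the reason the bound is genuinely weaker than what one gets for unstructured Gaussian vectors, where the full dimension $n$ would appear. One should also note this bound is deliberately crude: it discards the action of $A$ entirely except through $\|A\|_2$, so it will be pessimistic exactly when $A$ is far from rank one, mirroring the phenomenon already flagged after Theorem~\ref{tm:norm2-left-tail}.
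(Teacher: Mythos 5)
Your proposal is correct and follows essentially the same route as the paper: reduce to the event $\|\tilde x\|_2^2\|\hat x\|_2^2 > n\theta^2$ (the paper does this via the spectral decomposition of $A^TA$, which amounts to the same inequality $\|Ax\|_2\le\|A\|_2\|x\|_2$), split it by a union bound into the two chi-square tail events $\{\|\tilde x\|_2^2>\tilde n\theta\}$ and $\{\|\hat x\|_2^2>\hat n\theta\}$, and apply the standard Chernoff bound $(\theta e^{1-\theta})^{d/2}$ to each, using $\tilde n\le\hat n$ to collect the factor $2$ on the weaker exponent. No gaps.
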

\begin{proof}
	Proceeding with the spectral decomposition of $A^T A$ as in the proof of Theorem \ref{tm:norm2-left-tail}, it follows that the probability of \eqref{tm:norm2-right-tail:claim} failing for $x = \tilde{x} \otimes \hat{x}$ is
	\begin{equation}
		\label{eq:norm2-right-tail:1}
		\PP{ \|Ax\|_2 > \sqrt{n} \theta\, \|A\|_2 }
			= \PP{ \sum_{j=1}^n \lam_j (x^T u_j)^2 > \lam_1 n \theta^2 } \leq \PP{ \|x\|_2^2 > n \theta^2 },
	\end{equation}
	where the last inequality uses $\sum_{j=1}^n \lam_j (x^T u_j)^2 \leq \lam_1 \sum_{j=1}^n (x^T u_j)^2 =  \lam_1 \|x\|_2^2$.
Because $\|x\|_2 = \|\tilde x\|_2\|\hat x\|_2$ and $n = \hat{n} \cdot \tilde{n}$, it follows that
$\|x\|_2^2 > n \theta^2$ is only possible if $\|\tilde x\|_2^2 > \tilde{n}\theta$ or
$\|\hat x\|_2^2 >  \hat{n} \theta$. Thus,
\[
\PP{ \|\change{x}\|_2^2 > n \theta^2}
\leq \PP{ \|\tilde x\|_2^2 > \tilde{n} \theta } + \PP{ \|\hat x\|_2^2 > \hat{n} \theta } \le 2 \cdot \PP{ \|\tilde x\|_2^2 > \tilde{n} \theta }.
\]
Now, $\|\tilde x\|_2^2$ is a chi-square random variable with $\tilde n$ and its properties (see Lemma~\ref{lem:chi_bound} in the Appendix) imply
\[
 \PP{ \|\tilde x\|_2^2 > \tilde{n} \theta } \le (\theta e^{1-\theta})^{\tilde{n}/2},
\]
which concludes the proof.
\end{proof}

As already discussed for the upper bound, one directly obtains a corresponding result for the Frobenius norm by multiplying \eqref{tm:norm2-right-tail:claim} with $\sqrt{\rho}$:
\begin{equation}
	\label{eq:normF-right-tail-stable-rank}
	\PP{ \|A\|_F \ge \sqrt{\rho} n^{-1/2} \theta^{-1} \|A (\tilde{x} \otimes \hat{x}) \|_2 }
		\geq 1 - 2 e^{-\tilde n (\theta - \ln\theta - 1)/2}.
\end{equation}
Another bound for the Frobenius norm estimates is obtained by using a different approach.

\begin{theorem}
	\label{tm:normF-right-tail}
	Let $A \in \R^{m \times n}$ and $n = \hat{n} \cdot \tilde{n}$.
	Suppose that $\hat{x} \sim \normal(0, I_{\hat{n}})$ and $\tilde{x} \sim \normal(0, I_{\tilde{n}})$, and let $\theta > 2$. The inequality
	\begin{equation*}
		\|A\|_F \ge \theta^{-1} \|A (\tilde{x} \otimes \hat{x})\|_2
	\end{equation*}
	holds with probability at least
	$
		1 - \sqrt{2 \theta }\,e^{-\theta+2}.
	$
\end{theorem}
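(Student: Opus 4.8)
The plan is to bound the right tail via the \emph{moment method}, which avoids the obstruction mentioned in the introduction (that $\|A(\tilde x\otimes\hat x)\|_2^2$ is not sub-exponential). Concretely, I would first prove the moment estimate
$\EE\big[\|A(\tilde x\otimes\hat x)\|_2^{2p}\big]\le \big((2p-1)!!\big)^2\|A\|_F^{2p}$ for every positive integer $p$ (this is consistent with Lemma~\ref{lemma:expectedvalue} at $p=1$ and is in fact sharp for rank-one $A$, where $\|A(\tilde x\otimes\hat x)\|_2/\|A\|_F$ is a product of two independent standard normals), and then insert it into Markov's inequality applied to the $2p$-th power, optimizing $p\approx\theta/2$. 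I note in passing that conditioning and then applying a Chernoff bound to the inner probability would lose a spurious factor $\theta$ relative to the claimed constant, which is why I prefer the moment route.

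The moment estimate rests on an elementary convexity lemma: if $g_1,\dots,g_d$ are i.i.d.\ $\normal(0,1)$ and $\nu_1,\dots,\nu_d\ge 0$ with $\sum_j\nu_j=1$, then $\EE\big[(\sum_j\nu_j g_j^2)^p\big]\le \EE[g_1^{2p}]=(2p-1)!!$. The reason is that, for each fixed realization, $\nu\mapsto\big(\sum_j\nu_j g_j^2\big)^p$ is the composition of a nonnegative affine map with the convex increasing map $t\mapsto t^p$, hence convex on the probability simplex; taking expectations preserves convexity, and a convex function on a simplex attains its maximum at a vertex, where the value is $\EE[g_k^{2p}]=(2p-1)!!$. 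Rescaling, $\EE\big[(\sum_j\mu_j g_j^2)^p\big]\le (\sum_j\mu_j)^p (2p-1)!!$ for arbitrary $\mu_j\ge 0$.

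To reach the moment estimate for $\|A(\tilde x\otimes\hat x)\|_2^{2p}$ I would condition twice. With $x=\tilde x\otimes\hat x$ and $A_i$ the reshaping of the $i$-th column of $A^T$ as in the proof of Lemma~\ref{lemma:expectedvalue}, identity~\eqref{eq:decompblabla} gives $\|Ax\|_2^2=\sum_i(\hat x^TA_i\tilde x)^2=\|B_{\tilde x}\hat x\|_2^2$, where $B_{\tilde x}\in\R^{m\times\hat n}$ has rows $(A_i\tilde x)^T$. Conditionally on $\tilde x$, $\hat x\sim\normal(0,I_{\hat n})$, so diagonalizing $B_{\tilde x}^TB_{\tilde x}$ yields $\|B_{\tilde x}\hat x\|_2^2\stackrel{d}{=}\sum_j\mu_jg_j^2$ with $\sum_j\mu_j=\|B_{\tilde x}\|_F^2=\sum_i\|A_i\tilde x\|_2^2=:S(\tilde x)$, and the lemma gives $\EE[\|Ax\|_2^{2p}\mid\tilde x]\le S(\tilde x)^p(2p-1)!!$. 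Next, $S(\tilde x)=\tilde x^TG\tilde x$ with $G=\sum_iA_i^TA_i\succeq 0$ and $\trace G=\sum_i\|A_i\|_F^2=\|A\|_F^2$; diagonalizing $G$ and using $\tilde x\sim\normal(0,I_{\tilde n})$, the lemma applies once more to give $\EE[S(\tilde x)^p]\le\|A\|_F^{2p}(2p-1)!!$. Composing the two bounds yields $\EE[\|Ax\|_2^{2p}]\le\big((2p-1)!!\big)^2\|A\|_F^{2p}$.

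Finally, Markov's inequality gives $\PP{\|A(\tilde x\otimes\hat x)\|_2>\theta\|A\|_F}\le\big((2p-1)!!\big)^2\theta^{-2p}$ for every positive integer $p$. I would take $p=\lceil\theta/2\rceil$, so that $\theta\le 2p<\theta+2$ and $p\ge 2$ since $\theta>2$, and use the Stirling bound $(2p-1)!!\le\sqrt2\,(2p/e)^p e^{1/(24p)}$ to get $\big((2p-1)!!\big)^2\theta^{-2p}\le 2e^{1/(12p)}e^{-2p}(2p/\theta)^{2p}$. Writing $2p=\theta+r$ with $r\in[0,2)$ and bounding $(2p/\theta)^{2p}=(1+r/\theta)^{\theta+r}\le e^{r}(1+r/\theta)^{r}\le 4e^{r}$ collapses this to an absolute constant times $e^{-\theta}$, which lies below $\sqrt{2\theta}\,e^{-\theta+2}$ for all $\theta>2$. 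The genuinely new ingredient is the convexity lemma plus the double-conditioning that reduces everything to moments of weighted sums of independent $\chi^2_1$ variables; the only mildly delicate part is the last step, where the Stirling estimate and the integer rounding of $p$ must be organized carefully enough that the resulting constant fits under the clean expression $\sqrt{2\theta}\,e^{-\theta+2}$.
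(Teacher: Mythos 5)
Your proof is correct, and it takes a genuinely different route from the paper. The paper writes the failure probability as $\PP{\sum_i\mu_i(x^Tu_i)^2>\theta^2}$ with $\mu_i=\lambda_i/\trace(A^TA)$, applies Jensen's inequality with the convex increasing function $f(\xi)=e^{\sqrt{1+\xi}}$ to reduce to $\sum_i\mu_i\,\EE[e^{t|x^Tu_i|}]$, bounds each moment generating function by $2/\sqrt{1-t^2}$ via its Corollary~\ref{corollary:gausmgf}, and optimizes the continuous parameter $t=\sqrt{1-2/\theta}$ to land exactly on $\sqrt{2\theta}\,e^{-\theta+2}$. You instead bound the $2p$-th moment of the whole estimator, $\EE[\|Ax\|_2^{2p}]\le((2p-1)!!)^2\|A\|_F^{2p}$, by conditioning twice and invoking a clean convexity-on-the-simplex argument for $\EE[(\sum_j\nu_jg_j^2)^p]$, then apply polynomial Markov with $p=\lceil\theta/2\rceil$. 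Your moment inequality is in fact a strict generalization of the paper's bound~\eqref{eq:momentboundsgauss} in Lemma~\ref{lemma:gaussianchaos} from a single decoupled chaos variable $\hat x^TQ\tilde x$ to the full sum of squares $\sum_i(\hat x^TA_i\tilde x)^2$, proved by a different (and arguably more transparent) mechanism than the paper's diagonalization-and-direct-computation. What each approach buys: the paper's continuous optimization gives the stated closed form with minimal bookkeeping once the MGF corollary is in hand; your route avoids the somewhat ad hoc choice of $e^{\sqrt{1+\xi}}$ and actually yields the sharper tail $8e^{1/24}e^{-\theta}\approx 8.4\,e^{-\theta}$, which sits below $\sqrt{2\theta}\,e^{-\theta+2}\ge 2e^2e^{-\theta}$ for all $\theta>2$, at the cost of the integer rounding of $p$ and a Stirling estimate. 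All the individual steps check out: the vertex argument for the maximum of the convex function $\nu\mapsto\EE[(\sum_j\nu_jg_j^2)^p]$ on the simplex, the identification $\trace(\sum_iA_i^TA_i)=\|A\|_F^2$, and the final arithmetic with $2p=\theta+r$, $r\in[0,2)$.
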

\begin{proof}
Using, once again, the spectral decomposition of $A^T A$ as in the proof of Theorem \ref{tm:norm2-left-tail} and
denoting $x = \tilde{x} \otimes \hat{x}$, the failure probability equals
	\begin{align*}
		\PP{ \|A x \|_2 > \theta \|A\|_F }
			&= \PP{ x^T A^T A x > \theta^2 \cdot \text{trace}(A^T A) }
			= \PP{ \sum_{i=1}^n \lam_i (x^T u_i)^2 > \theta^2 \sum_{i=1}^n \lam_i } \\
			&= \PP{ \sum_{i=1}^n \mu_i (x^T u_i)^2 > \theta^2 },
	\end{align*}
	where $\mu_i := \frac{\lam_i}{\sum_{j=1}^n \lam_j} \in [0, 1]$, which satisfy $\sum_{i=1}^n \mu_i = 1$.
	Exploiting that the function $f(\xi) = e^{\sqrt{1 + \xi}}$ is convex on $[ 0, +\infty \rangle$,
Jensen's inequality gives
	\begin{align*}
		f \left( \sum_{i=1}^n \mu_i \cdot t^2 (x^T u_i)^2 \right)
			&\leq \sum_{i=1}^n \mu_i f \left(  t^2 (x^T u_i)^2 \right)
			= \sum_{i=1}^n \mu_i e^{\sqrt{ 1 + t^2 (x^T u_i)^2}} \\
			&\leq \sum_{i=1}^n \mu_i e^{1 + t |x^T u_i|}
			= e \sum_{i=1}^n \mu_i e^{t |x^T u_i|}
	\end{align*}
	for all $t > 0$. Combined with the monotonicity of $f$ as well as Markov's inequality, we obtain
	\begin{align}
		\PP{ \sum_{i=1}^n \mu_i (x^T u_i)^2 > \theta^2 }
			&= \PP{ f \Big( \sum_{i=1}^n \mu_i \cdot t^2 (x^T u_i)^2 \Big) > f( t^2 \theta^2 ) } \nonumber \\
			&\leq \PP{ e \sum_{i=1}^n \mu_i e^{t |x^T u_i|} > e^{\sqrt{ 1 + t^2 \theta^2 } } } \nonumber \\
			&\leq \EE\Big[ \sum_{i=1}^n \mu_i e^{t |x^T u_i|} \Big] e^{ 1 - \sqrt{ 1 + t^2 \theta^2 } } \nonumber \\
			&= \Big( \sum_{i=1}^n \mu_i \EE\big[e^{t |x^T u_i|} \big] \Big) e^{ 1 - \sqrt{ 1 + t^2 \theta^2 } }. \label{eq:normF-left-tail:1}
	\end{align}
Since $e^{|a|} \leq e^a + e^{-a}$, we obtain for $0 < t < 1$ that
	$$
		\EE\big[e^{t |x^T u_i|} \big] \leq \EE\big[e^{t (x^T u_i)} \big] + \EE\big[e^{t (x^T \cdot (-u_i))} \big] \le
		2 \frac{1}{\sqrt{1-t^2}},
	$$
	where we used Corollary~\ref{corollary:gausmgf} in the last step. Plugged into~\eqref{eq:normF-left-tail:1}, it follows that
	\[
	 \PP{ \sum_{i=1}^n \mu_i (x^T u_i)^2 > \theta^2 } \le  2 e \frac{ e^{ - \sqrt{ 1 + t^2 \theta^2 } } }{\sqrt{1-t^2}} \le
	  \sqrt{2 \theta }\,e^{-\theta+2},
	\]
	where the last inequality follows from setting $t =  \sqrt{1 - 2/\theta}$.
%
%
\end{proof}

\begin{example} \label{example:numexp}
	\label{ex:small-sample}
	To illustrate the theoretical results presented above, we generate matrices with different singular value decompositions, and compare their norms with randomized estimates.
	More precisely, we consider the following seven $n \times n$ matrices:
	\change{%
	\begin{itemize}
		\item rank-one matrices $A_1 = U_1 (e_1 e_1^T)$, $A_2 = U_2 (e_1 e_1^T) V_2^T$, where $e_1 = [1, 0, \dots, 0]^T$;
		\item matrices $A_3 = U_3 D$, $A_4 = U_4 D V_4^T$, where $D$ is diagonal with $D_{ii} = e^{-i/2}$;
		\item matrices $A_5 = U_5 D$, where $D$ is diagonal with $D_{ii} = i^2$; 
		\item the matrix $A_6$ is a random orthogonal matrix.
		\item the matrix $A_7$ is a random Gaussian matrix.
	\end{itemize}
	}
	Here, $U_i$, $V_i$, \change{and $A_6$} are chosen randomly from the uniform distribution on orthogonal matrices.

	For each of these matrices, we sample $100\,000$ vectors $x = \tilde{x} \otimes \hat{x}$ for $\tilde{x}, \hat{x} \sim \normal(0, I)$ in order to estimate the probability that one of the inequalities $\|A_i\| \leq \tau \|A_i x\|_2$  or $\|A_i\| \geq \tau^{-1} \|A_i x\|_2$ fails, with $\tau \in [1, 100]$. The obtained results are shown in Figure~\ref{fig:small-sample}. We have chosen $n = 16$ and $n=196$ for $\|\cdot\|\equiv \|\cdot\|_2$ and 
	$\|\cdot\|\equiv \|\cdot\|_F$, respectively.
	
	Figure~\subref*{fig:norm2-upper} displays failure probabilities for the upper bound $\|A_i\|_2 \le \tau \|A_i x\|_2$ as well as the corresponding bound by 
	Theorem~\ref{tm:norm2-left-tail}, with $\tau = \theta$. The bound happens to be quite tight for the rank-one matrices $A_1$ and $A_2$. Although the bound is significantly less tight for the other matrices, for which the spectral and Frobenius norms are different, this demonstrates that the result of Theorem~\ref{tm:norm2-right-tail} cannot be improved significantly without taking additional properties of the matrix into account.
	
	Figure~\subref*{fig:norm2-lower} displays failure probabilities for the lower bound $\|A_i\|_2 \ge \tau^{-1} \|A_i x\|_2$ as well as the corresponding bound by 
	Theorem~\ref{tm:norm2-right-tail}, with $\tau = \sqrt{n}\, \theta$. Clearly, the bound is not sharp but it correctly captures the exponential decay of the probabilities with respect to $\theta$.
	
	Figure~\subref*{fig:normF-upper} displays failure probabilities for the upper bound $\|A_i\|_F \le \tau \|A_i x\|_2$ as well as the corresponding bound from~\eqref{eq:normF-left-tail-stable-rank}, with $\tau = \sqrt{\rho}\,\theta$. As the bound depends on the stable rank, the dashed lines shown in the figure differ for matrices with different stable rank. Solid and dashed lines of the same color and mark belong to the same matrix $A_i$. Again, the bounds are quite tight for 
	low (stable) rank but becomes increasingly loose as the stable rank increases. In fact, the bound increases with larger stable rank while estimated failure probabilities actually \emph{decrease}. 
	
    Figure~\subref*{fig:normF-lower} displays failure probabilities for the upper bound $\|A_i\|_F \le \tau^{-1} \|A_i x\|_2$ as well as the corresponding bound~\eqref{eq:normF-right-tail-stable-rank}, with $\tau = \sqrt{n/\rho}\, \theta$. Again, this bound depends on the stable rank. Although far off for matrices of low (stable) rank, it correctly captures the observation that the exponential decay becomes faster as the stable rank $\rho$ increases. In contrast, the bound from Theorem~\eqref{eq:normF-right-tail-stable-rank} does not depend on $\rho$ and is clearly preferable when $\rho$ is small or no estimate of $\rho$ is available.

\end{example}

\begin{figure} 
	\centering
	\subfloat[%
		Theorem \ref{tm:norm2-left-tail} (dashed line) vs. estimated failure probabilities for
		$\|A_i\|_2 > \tau \|A_i x\|_2$ (solid line).%
	]{%
		\label{fig:norm2-upper}\includegraphics[width=0.49\textwidth]{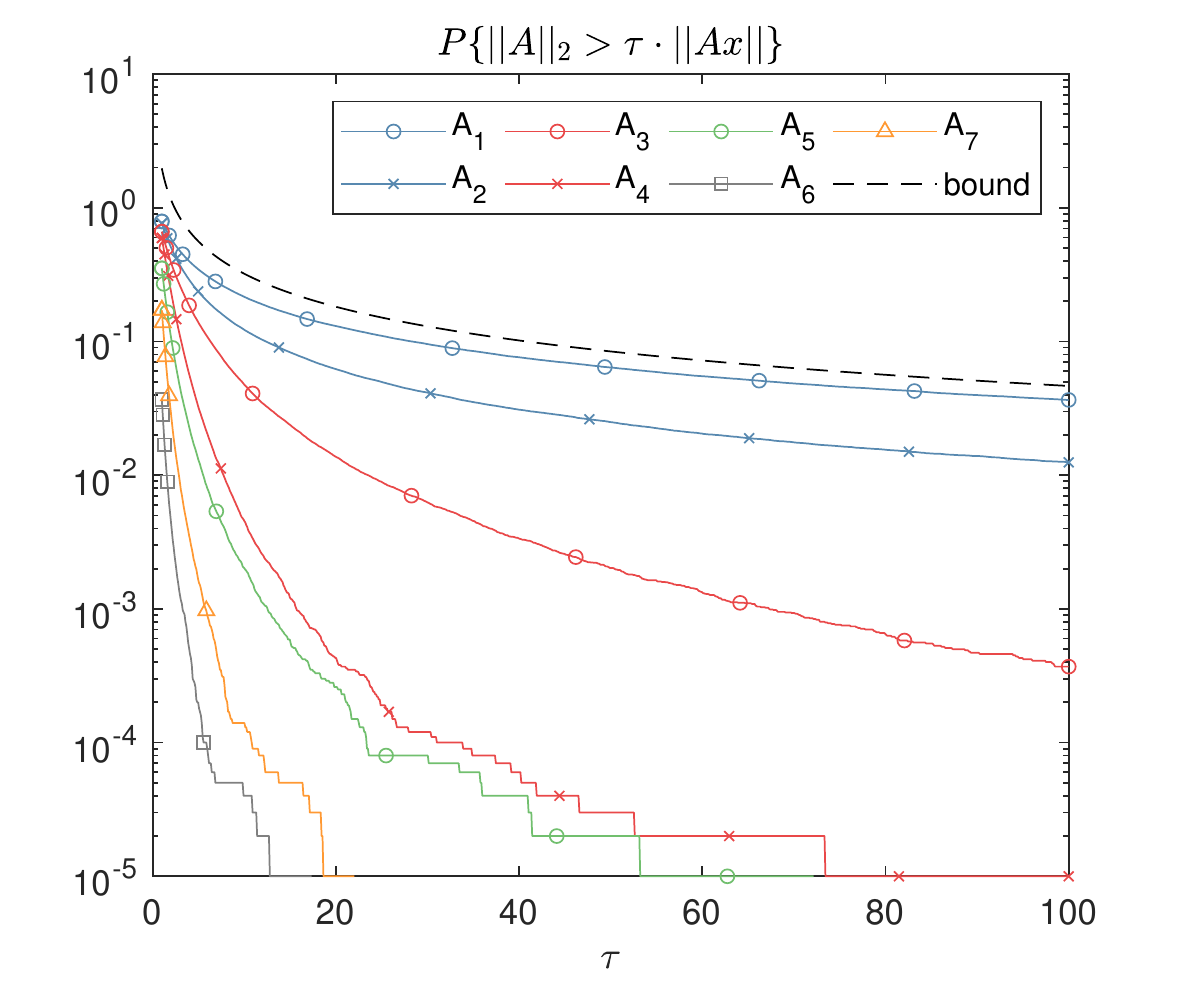}}%
	\hfill
	\subfloat[%
		Theorem \ref{tm:norm2-right-tail} (dashed line) vs. estimated failure probabilities for
		$\|A_i\|_2 < \tau^{-1} \|A_i x\|_2$ (solid line).%
	]{%
		\label{fig:norm2-lower}\includegraphics[width=0.49\textwidth]{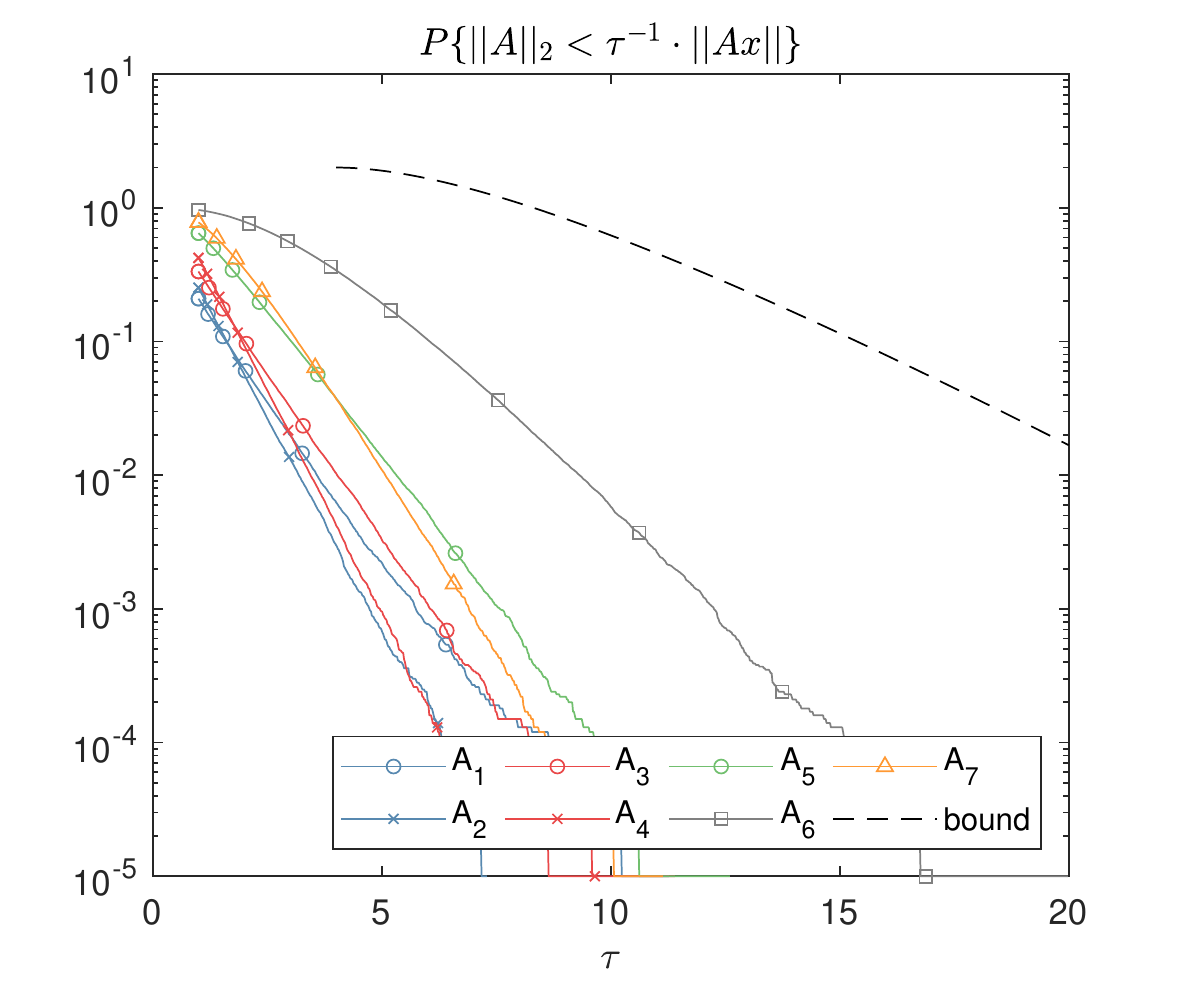}}%
	\\
	\subfloat[%
		Bound~\eqref{eq:normF-left-tail-stable-rank} (dashed lines) vs. estimated failure probabilities for
		$\|A_i\|_F > \tau \|A_i x\|_2$ (solid lines).
	]{%
		\label{fig:normF-upper}\includegraphics[width=0.49\textwidth]{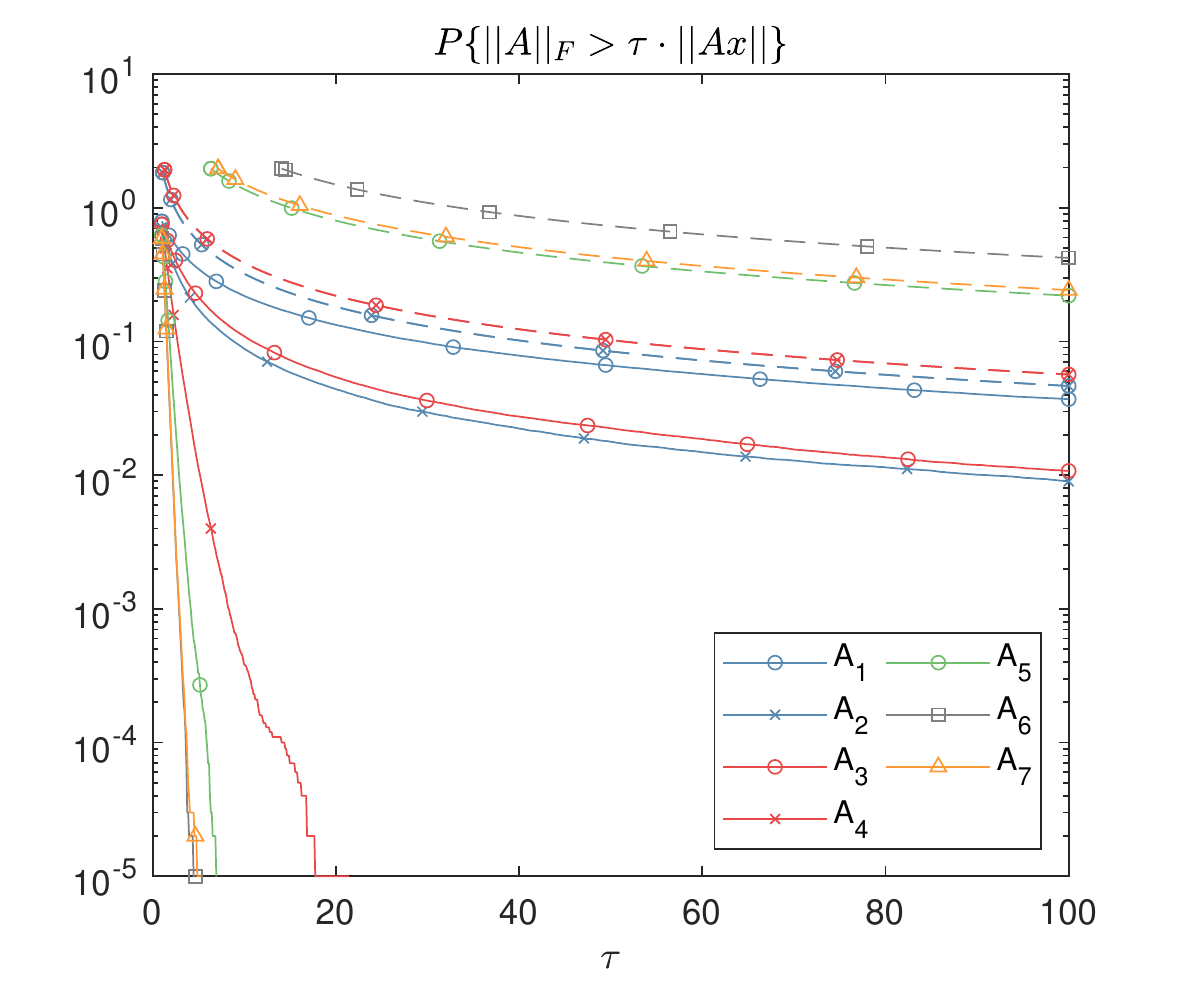}}%
	\hfill
	\subfloat[%
		Theorem~\ref{tm:normF-right-tail} (dashed black line) versus estimated failure probabilities for
		$\|A_i\|_F < \tau^{-1} \|A_i x\|_2$. Upper bounds~\eqref{eq:normF-right-tail-stable-rank} are also shown (dashed colored lines).
	]{%
		\label{fig:normF-lower}\includegraphics[width=0.49\textwidth]{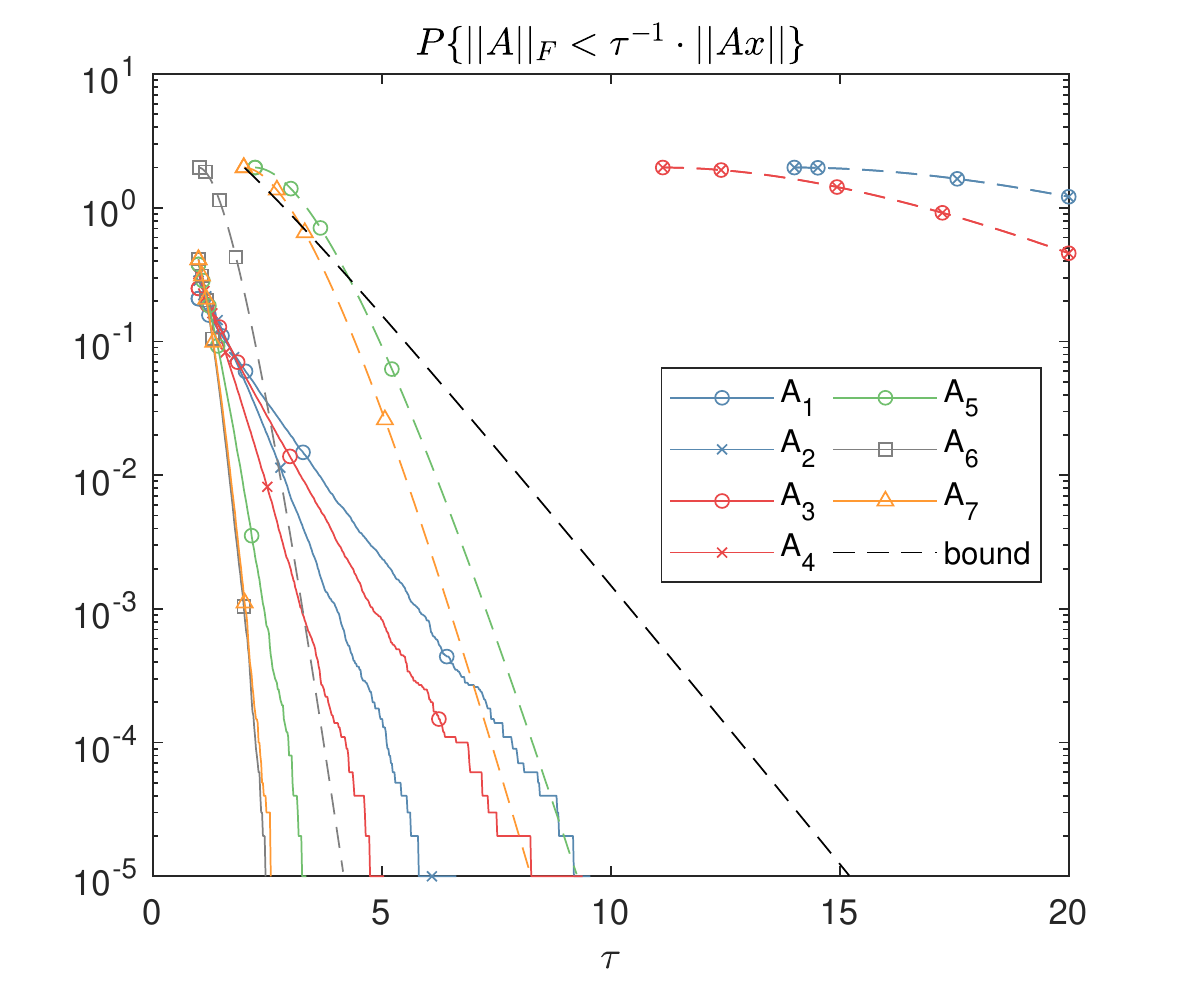}}
	\caption{%
        \label{fig:small-sample}
		Theoretical bounds on failure probabilities versus estimates; see Example \ref{ex:small-sample} for details.
		}
\end{figure}

\subsection{Small-sample estimation: Rank-one Gaussian vectors}
\label{sec:small-sample}

As discussed in the introduction, a simple way to reduce the failure probability of upper bounds is to use a maximum estimator. In our setting, this translates into
\begin{equation*} 
 \mathsf{Max}_k:=\max_{j=1, \ldots, k} \|A (\tilde{x}_j \otimes \hat{x}_j)\|_2
\end{equation*}
with independent standard Gaussian vectors $\tilde{x}_j$, $\hat{x}_j$ for $j=1, \ldots, k$.

Using Theorem~\ref{tm:norm2-left-tail} one obtains
\begin{equation}
	\label{eq:norm2-upper-max-estimator}
	\PP{ \|A\|_2 \leq \theta \cdot \mathsf{Max}_k }
		\geq 1 - \Big( \frac{2}{\pi} \left( 2 + \ln(1 + 2\theta) \right) \theta^{-1} \Big)^k.
\end{equation}
For example, for $\theta = 10$, choosing $k = 7$ is sufficient to guarantee a success probability of more than $99.9\%$.

Of course, taking the maximum for several samples increases the risk of overestimation. But this increase can be easily mitigated by accepting a slight increase of the overestimation factor.
By Theorem~\ref{tm:normF-right-tail}, the probability that $\|A (\tilde{x} \otimes \hat{x})\|_2$ overestimates $\|A\|_F$ by more than a factor $10$ is less than $0.15\%$. For the maximum estimator one has (again by Theorem~\ref{tm:normF-right-tail}) that
$$
	\PP{ \|A\|_F \ge \theta^{-1}\cdot \mathsf{Max}_k }
		\geq \big( 1 - \sqrt{2 \theta }\,e^{-\theta+2} \big)^k.
$$
For $k = 7$, the probability that $\mathsf{Max}_k$ overestimates $\|A\|_F$ by more than a factor $12.1$ is again less than $0.15\%$.

\subsection{One-sample estimation: Rank-one Rademacher vectors} \label{sec:rademacherone}

In this section, we discuss whether the results from Sections~\ref{sec:uppergauss} and~\ref{sec:lowergauss} can be extended when we choose $\tilde{x}$ and $\hat{x}$ to be Rademacher instead of standard Gaussian vectors in the rank-one vector $x = \tilde{x} \otimes \hat{x}$.


%

It turns out that it is not possible to have an upper bound of the form presented in Theorem~\ref{tm:norm2-left-tail} for Rademacher vectors \change{without further assumptions on $A$}. To see this, consider the matrix $A = uu^T \in \R^{n \times n}$ with $u = e / \sqrt{n}$, where $e$ denotes the vector of all ones.
Then $A^T A = A$ and
$$
	\PP{ \|A\|_2 > \theta \|Ax\|_2 }
		= \PP{ \|A^T A\|_2 > \theta^2 x^T A^T A x}
		= \PP{ (x^T u)^2 < \theta^{-2} }
		= \PP{ -\theta^{-1} < x^T u < \theta^{-1} }.
$$
Now $x^T u = (\tilde x\otimes \hat x)^T u = n^{-1/2} (\hat x^T e) (\tilde x^T e )$, which implies
$$
	\PP{ x^T u = 0 }
		= \PP{ (\hat x^T e) (\tilde x^T e ) = 0 }
		\geq \PP{ \hat x^T e = 0 }.
$$
For even $\hat{n}$, the sum $\hat x^T e = \hat x_1 + \cdots  + \hat x_{\hat n}$ equals zero when exactly $\hat{n}/2$ of the entries are
equal to~$1$. As this happens with probability $\binom{ \hat{n} }{ \hat{n}/2 } / 2^{\hat{n}}$, we obtain the lower bound
$$
\PP{ \|A\|_2 > \theta \|Ax\|_2 } \ge  \PP{ x^T u = 0 }
		\geq 2^{-\hat{n}} \binom{ \hat{n} }{ \hat{n}/2 }
		\approx \frac{1}{\sqrt{\hat{n} \pi/2}},
$$
for any $\theta > 1$. In particular, and in contrast to the result of Theorem~\ref{tm:norm2-left-tail}, the failure probability does not converge to zero as $\theta$ increases. Let us stress that this is not an artifact of using rank-one vectors; an analogous negative result can be obtained when using an (unstructured) Rademacher vector $x$.

On the other hand, because Rademacher vectors are bounded, the risk of overestimation becomes zero beyond a certain threshold. More specifically, it always holds that $\|\hat{x}\|_2 = \sqrt{\hat{n}}$, $\|\tilde{x}\|_2 = \sqrt{\tilde{n}}$, and hence, for all matrices $A$, by equation \eqref{eq:norm2-right-tail:1} we have
$$
	\PP{ \|Ax\|_2 > \sqrt{n} \|A\|_2 }
		\leq \PP{ \|x\|_2 > \sqrt{n}  }
		= \PP{ \|\hat{x}\|_2 \cdot \|\tilde{x}\|_2 > \sqrt{n} } = 0.
$$
This is in contrast to the result of Theorem~\ref{tm:norm2-right-tail}, which yields a small but nonzero risk for Gaussian vectors.

\section{Large-sample estimation of the trace} \label{sec:largesample}

In this section, we analyze the use of random rank-one vectors in stochastic trace estimators for estimating $\trace(B) = \|A\|_F^2$ with $B = A^T A$:
\begin{equation} \label{eq:tracestimator}
 \mathsf{Est}_k = \frac1k \sum_{i = 1}^k (\tilde x_i \otimes \hat x_i)^T B ( \tilde x_i \otimes \hat x_i ).
\end{equation}
By Lemma~\ref{lemma:expectedvalue}, we have $\EE[\mathsf{Est}_k] = \trace(B)$. The following theorem shows that $\mathsf{Est}_k$ times a modest factor is an upper bound of $\trace(B)$ with high probability for larger $k$.

\begin{theorem}\label{tm:traceest}
Let $0<\varepsilon < 1$ and consider the trace estimator $\mathsf{Est}_k$ defined in~\eqref{eq:tracestimator}, where $\tilde x_i$, $\hat x_i$ are either independent standard Gaussian or independent Rademacher random vectors. Then the bound
  \[
   \trace(B) \le \frac{1}{1-\varepsilon} \mathsf{Est}_k
  \]
  holds with probability at least $1-\exp(-k\varepsilon^2/18)$.
\end{theorem}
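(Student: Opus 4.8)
The plan is to bound the lower tail of $\mathsf{Est}_k = \frac1k\sum_{i=1}^k Z_i$, where $Z_i := \|A(\tilde x_i\otimes\hat x_i)\|_2^2 = (\tilde x_i\otimes\hat x_i)^T B\,(\tilde x_i\otimes\hat x_i)\ge 0$ and, by Lemma~\ref{lemma:expectedvalue}, $\EE[Z_i] = \|A\|_F^2 = \trace(B) =: \tau$. The claimed inequality fails precisely when $\mathsf{Est}_k < (1-\varepsilon)\tau$, so I would apply the exponential Markov inequality in the form $\PP{\mathsf{Est}_k < (1-\varepsilon)\tau} \le e^{t(1-\varepsilon)k\tau}\big(\EE[e^{-tZ_1}]\big)^k$ for $t>0$. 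The key point — and the reason a one-sided bound is available even though $Z_1$ is \emph{not} sub-exponential — is that $\EE[e^{-tZ_1}]<\infty$ for every $t\ge 0$ because $Z_1\ge 0$, and that $e^{-s}\le 1-s+\tfrac12 s^2$ for $s\ge 0$ yields $\EE[e^{-tZ_1}] \le 1 - t\tau + \tfrac12 t^2\EE[Z_1^2] \le \exp(-t\tau + \tfrac12 t^2\EE[Z_1^2])$. Substituting and minimizing over $t>0$ at $t=\varepsilon\tau/\EE[Z_1^2]$ gives $\PP{\mathsf{Est}_k<(1-\varepsilon)\tau}\le \exp\big(-k\varepsilon^2\tau^2/(2\,\EE[Z_1^2])\big)$, so everything reduces to the second-moment estimate $\EE[Z_1^2]\le 9\tau^2$, which then produces the exponent $-k\varepsilon^2/18$.

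The core of the proof is thus to bound $\EE[Z_1^2] = \EE[\|A(\tilde x\otimes\hat x)\|_2^4]$ for both the Gaussian and the Rademacher case. I would first record the elementary fourth-moment fact that for a symmetric positive semidefinite matrix $C$ and a vector $z$ with i.i.d.\ standard Gaussian or Rademacher entries,
\[
 \EE\big[(z^T C z)^2\big] = \trace(C)^2 + 2\|C\|_F^2 - c_z\sum_a C_{aa}^2 \le \trace(C)^2 + 2\|C\|_F^2 \le 3\,\trace(C)^2,
\]
with $c_z=0$ (Gaussian) or $c_z=2$ (Rademacher), where the last step uses $\|C\|_F^2\le\|C\|_2\trace(C)\le\trace(C)^2$. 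Then I would condition on $\tilde x$ and write $Z_1 = \hat x^T B_{\tilde x}\hat x$ with $B_{\tilde x}:=(\tilde x\otimes I_{\hat n})^T B\,(\tilde x\otimes I_{\hat n})\succeq 0$; the fact above applied to $\hat x$ gives $\EE[Z_1^2\mid\tilde x]\le 3\,\trace(B_{\tilde x})^2$. Finally, $\trace(B_{\tilde x}) = \trace\big(B\,(\tilde x\tilde x^T\otimes I_{\hat n})\big)$ is itself a quadratic form $\tilde x^T\tilde S\,\tilde x$ in $\tilde x$ with $\tilde S\in\R^{\tilde n\times\tilde n}$ symmetric positive semidefinite (a trace of a product of two positive semidefinite matrices is nonnegative) and $\trace(\tilde S)=\trace(B)=\tau$; applying the fourth-moment fact a second time, now to $\tilde x$, gives $\EE[\trace(B_{\tilde x})^2]\le 3\tau^2$, hence $\EE[Z_1^2] = \EE\big[\EE[Z_1^2\mid\tilde x]\big]\le 9\tau^2$.

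Combining the two parts gives the bound $\exp(-k\varepsilon^2/18)$ claimed in the theorem. The step I expect to be the main obstacle is the second-moment estimate $\EE[Z_1^2]\le 9\tau^2$: since $\|A(\tilde x\otimes\hat x)\|_2^2$ lacks the sub-exponential behaviour that standard trace-estimator analyses rely on, and the Kronecker structure destroys rotational invariance, one cannot argue as in the unstructured case; the remedy is the two-step conditioning above, which first integrates out $\hat x$ against a positive semidefinite quadratic form and then handles $\tilde x$ by the same elementary fourth-moment bound. A minor point to dispatch separately is the degenerate case $A=0$ (equivalently $\tau=0$), where the claim is trivially true; otherwise $\EE[Z_1^2]>0$ and the minimizing $t$ above is well defined.
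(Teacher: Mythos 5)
Your proof is correct, and it takes a genuinely different route from the paper's. The paper first performs a spectral decomposition $B=U\Lambda U^T$ and applies Jensen's inequality to the convex combination $\sum_j \mu_j\, t z_{ij}^2$ with weights $\mu_j=\lambda_j/\trace(B)$; this reduces the Chernoff bound to the moment generating function of a single decoupled order-two chaos $z=\hat x^T Q\tilde x$ with $\|Q\|_F=1$, whose moments $\EE[z^2]=1$, $\EE[z^4]\le 9$ are supplied by Khintchine's inequality (Rademacher) and a separate appendix lemma (Gaussian). You instead apply the Chernoff/second-moment argument directly to the full quadratic form $Z_1=x^TBx$ and reduce everything to the single estimate $\EE[Z_1^2]\le 9\,\trace(B)^2$, which you obtain by two-step conditioning together with the elementary identity $\EE[(z^TCz)^2]=\trace(C)^2+2\|C\|_F^2-c_z\sum_a C_{aa}^2\le 3\,\trace(C)^2$ for positive semi-definite $C$ (note the partial trace $\tilde S$ with $\tilde S_{pq}=\trace(B_{pq})$ is indeed positive semi-definite with $\trace(\tilde S)=\trace(B)$, as you argue). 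Both routes land on the same inequality $\exp(-kt\varepsilon+\tfrac{9}{2}kt^2)$ and hence the same exponent $-k\varepsilon^2/18$ at $t=\varepsilon/9$. Your version is more self-contained and treats the Gaussian and Rademacher cases uniformly without invoking chaos moment bounds; the paper's version isolates the decoupled-chaos moments as a reusable ingredient (they also appear in its Theorems on spectral-norm estimation) and makes visible where the constant $9$ enters as a fourth-moment bound for $\hat x^TQ\tilde x$.
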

\begin{proof}
The proof is divided into two parts. First, we follow the arguments from~\cite{Roosta2015} and use a Chernoff bound. We then arrive at the problem of bounding the moment generating function of decoupled Gaussian / Rademacher chaos, which will be discussed in the second part.

Consider the spectral decomposition $B = U \Lambda U^T$ with $U$ orthogonal and $\Lambda$ diagonal containing the eigenvalues $\lambda_1\ge \cdots \ge \lambda_n \ge 0$ on the diagonal. Letting $u_j$ denote the $j$th column of $U$ and $x_i = \tilde x_i \otimes  \hat x_i$, we obtain
\[
 \mathsf{Est}_k = \frac1k \sum_{i = 1}^k x_i^T U \Lambda U^T x_i = \frac1k \sum_{i = 1}^k \sum_{j = 1}^n \lambda_j (x_i^T \change{u_j})^2 =
 \frac1k  \sum_{j = 1}^n \lambda_j \sum_{i = 1}^k z_{ij}^2,
\]
where we set $z_{ij} := x_i^T \change{u_j}$.
The statement of the theorem is equivalent to showing that $\exp(-k\varepsilon^2/18)$ is an upper bound for
\begin{align*}
P &:= \PP{\mathsf{Est}_k < (1-\varepsilon) \trace(B)} =  \mathbb P\Big\{ \sum_{j = 1}^n \frac{\lambda_j}{\trace(B)} \sum_{i = 1}^k z_{ij}^2 < k (1-\varepsilon)\Big\}.
\end{align*}
By the Chernoff bound, it holds for arbitrary $t > 0$ that
\begin{align*}
 P & \le \exp(tk(1-\varepsilon)) \EE\Big[ \exp\Big( -\sum_{j = 1}^n \frac{\lambda_j}{\trace(B)} \sum_{i = 1}^k tz_{ij}^2 \Big) \Big] \\
 &\le \exp(tk(1-\varepsilon)) \sum_{j = 1}^n \frac{\lambda_j}{\trace(B)} \EE\Big[ \exp\Big( - \sum_{i = 1}^k tz_{ij}^2 \Big) \Big] \\
 &=  \exp(tk(1-\varepsilon)) \sum_{j = 1}^n \frac{\lambda_j}{\trace(B)} \prod_{i = 1}^k \EE\big[ \exp\big(-tz_{ij}^2\big)\big],
\end{align*}
where we used Jensen's inequality and the convexity of the exponential in the second inequality, and the independence of $z_{ij}$ for different $i$ in the equality.

It remains to bound the moment generating function $\EE\big[ \exp\big(-tz_{ij}^2\big)\big]$. To simplify the notation we let $Q\in \R^{\hat n \times \tilde n}$ denote the matrix such that $\text{vec}(Q) = u_j$ and drop indices. Then
$
 z = \hat x^T Q \tilde x
$, a random variable that is sometimes called decoupled (order-$2$) chaos.
Using that $\exp(-t\alpha^2) \le 1-t\alpha^2 + \frac12 t^2 \alpha^4$ holds for any fixed $\alpha\in \R$, we obtain
\[
 \EE\big[ \exp(-tz^2)\big] \leq 1- t \EE[z^2] + \frac{t^2}{2} \EE[z^4].
\]
For both, Rademacher and Gaussian random vectors $\hat{x}$ and $\tilde{x}$, we have that
\[
 \EE[z^2] = \|Q\|_F^2, \quad \EE[z^4]  \le 9  \|Q\|_F^4.
\]
For the Rademacher case, this follows from Khintchine inequalities; see, e.g.,~\cite[Section 6.8]{Rauhut2010}.
For the Gaussian case, see Lemma~\ref{lemma:gaussianchaos} in the appendix.
Because of $\|\change{u_j}\|_2 = 1$, the matrix $\change{Q}$ has Frobenius norm $1$ and we thus arrive at
\begin{align*}
 P &\le \exp(tk(1-\varepsilon)) \Big(1- t + \frac{9}{2} t^2\Big)^k.
\end{align*}
Taking the logarithm and applying Taylor expansion, we obtain
\[
 \frac1k \log P \le t(1-\varepsilon) + \log \Big(1- t + \frac{9}{2} t^2\Big) \le t(1-\varepsilon) + \Big( - t + \frac{9}{2} t^2 \Big).
\]
For $t = \varepsilon/9$, the right-hand side equals $-\varepsilon^2/18$, which concludes the proof.
\end{proof}

\change{When $\tilde n = 1$ or $\hat n = 1$ and $\tilde x_i$, $\hat x_i$ are Rademacher vectors, the Kronecker product $\tilde x_i \otimes \hat x_i$ is again a Rademacher vector and Theorem~\ref{tm:traceest} becomes a result on the standard stochastic trace estimator. Compared to the bound $\exp(-k( \varepsilon^2/4 - \varepsilon^3/6))$ on the failure probability established in~\cite{Roosta2015} for this case, the bound of Theorem~\ref{tm:traceest} is only modestly worse. Compared to the bound from~\cite{Cortinovis2020}, it can become significantly worse when the stable rank of $A$ is known a priori to be large.}

Deriving a lower bound estimate for $\mathsf{Est}_k$ turns out to be more difficult. The following theorem only consider the case of Rademacher vectors; it is shown that $n^{-1}\cdot \mathsf{Est}_k$ times a modest factor is a lower bound of $\trace(B)$ with high probability.

\begin{theorem}\label{tm:tracelowerrademacher}
Let $\varepsilon > 0$ and consider the trace estimator $\mathsf{Est}_k$ defined in~\eqref{eq:tracestimator} for  independent Rademacher random vectors $\tilde x_i$, $\hat x_i$. Then the bound
\[
   \trace(B) \ge \frac{1}{\change{1+\varepsilon}} \mathsf{Est}_k
\]
holds with probability at least $1-\exp(-k\change{\varepsilon/(n-1)})$, provided that $\change{n-1} \ge 48\cdot \varepsilon^{-1}$.
\end{theorem}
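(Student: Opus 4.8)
The plan is to follow the proof of Theorem~\ref{tm:traceest} as closely as possible, applying a Chernoff bound to the \emph{upper} tail of $\mathsf{Est}_k$. Writing $x_i = \tilde x_i\otimes\hat x_i$, taking the spectral decomposition $B=U\Lambda U^T$ with $\lambda_1\ge\cdots\ge\lambda_n\ge0$, and setting $z_{ij}:=x_i^Tu_j$, the bound $\trace(B)\ge\mathsf{Est}_k/(1+\varepsilon)$ fails precisely when $\sum_{j=1}^n\frac{\lambda_j}{\trace(B)}\sum_{i=1}^k z_{ij}^2 > k(1+\varepsilon)$. For $t>0$, Markov's inequality together with Jensen's inequality applied to the convex function $\exp$ with the weights $\lambda_j/\trace(B)$ (exactly the manoeuvre in the proof of Theorem~\ref{tm:traceest}, only with the sign of the exponent reversed) and independence over $i$ reduce this, just as there, to bounding the moment generating function $\EE[\exp(tz^2)]$ of the decoupled order-$2$ Rademacher chaos $z=\hat x^TQ\tilde x$ for an arbitrary matrix $Q$ with $\|Q\|_F=1$ — but now at a \emph{positive} argument.

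I expect this moment generating function to be the real obstacle, and it is where the Rademacher assumption becomes indispensable. In Theorem~\ref{tm:traceest} the negative-argument MGF was handled by the globally valid inequality $\exp(-t\alpha^2)\le 1-t\alpha^2+\tfrac12 t^2\alpha^4$; no analogue holds for $\exp(+t\alpha^2)$, and, worse, $z^2$ fails to be sub-exponential (precisely the difficulty flagged in the introduction), so $\EE[\exp(tz^2)]$ cannot be controlled uniformly in $t$. The saving fact is that Rademacher vectors have deterministic norm, so $|z|=|\hat x^TQ\tilde x|\le\|\hat x\|_2\,\|Q\|_2\,\|\tilde x\|_2\le\sqrt{\hat n\tilde n}=\sqrt n$, i.e.\ $z^2\le n$ almost surely. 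For $0\le w\le n$ and $t>0$ one has $\exp(tw)-1-tw=\sum_{p\ge2}(tw)^p/p!\le (w^2/n^2)\sum_{p\ge2}(tn)^p/p! = (w^2/n^2)(e^{tn}-1-tn)$, using $w^{p-2}\le n^{p-2}$; taking $w=z^2$, taking expectations, and using $\EE[z^2]=\|Q\|_F^2=1$ and the fourth-moment bound $\EE[z^4]\le 9\|Q\|_F^4=9$ (the Khintchine estimate already invoked in the proof of Theorem~\ref{tm:traceest}) gives
\[
  \EE\big[\exp(tz^2)\big]\;\le\;1+t+\frac{9}{n^2}\big(e^{tn}-1-tn\big),
\]
uniformly over all $Q$ with $\|Q\|_F=1$.

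Inserting this into the Chernoff bound and using $\log(1+x)\le x$ yields, per sample,
\[
  \tfrac1k\log\PP{\mathsf{Est}_k>(1+\varepsilon)\trace(B)}\;\le\;-t\varepsilon+\frac{9}{n^2}\big(e^{tn}-1-tn\big).
\]
It then remains to pick $t$. Choosing $t=2/n$ keeps $t$ of order $1/n$, makes $e^{tn}-1-tn=e^2-3$ an absolute constant, and turns the right-hand side into $-\tfrac{2\varepsilon}{n}+\tfrac{9(e^2-3)}{n^2}$; an elementary estimate shows this is at most $-\tfrac{\varepsilon}{n-1}$ exactly when $\varepsilon(n-2)\ge 9(e^2-3)(n-1)/n$, which follows from the stated hypothesis $\varepsilon(n-1)\ge 48$ (note $9(e^2-3)<48$; the cases of very small $n$ are trivial since then the failure probability is $0$). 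Raising to the $k$-th power gives the claimed $1-\exp(-k\varepsilon/(n-1))$. Optimizing $t$ over the admissible range would shrink the constant $48$ somewhat but not alter the shape of the bound.

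Finally, a word on why the conclusion is so much weaker than Theorem~\ref{tm:traceest}: everything here rests on the crude deterministic cap $z^2\le n$, which is all one has without sub-exponentiality and which pins $t$ to the range $O(1/n)$. This single feature is responsible simultaneously for the extra factor $1/(n-1)$, the \emph{linear} (rather than quadratic) dependence on $\varepsilon$ in the exponent, and the side condition $\varepsilon(n-1)\ge 48$. For rank-one Gaussian vectors even the cap $z^2\le n$ fails, which is why that case must be treated separately — with yet further losses — in Theorem~\ref{tm:tracelower}.
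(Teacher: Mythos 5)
Your proof is correct, and it rests on exactly the same mechanism as the paper's: the deterministic cap $z^2\le n$ available for Rademacher chaos, together with $\EE[z^2]=\|Q\|_F^2=1$ and the Khintchine bound $\EE[z^4]\le 9$, fed into an exponential Markov bound. Only the packaging differs. The paper aggregates each sample into the bounded, zero-mean variable $X_i=\sum_j\tfrac{\lambda_j}{\trace(B)}z_{ij}^2-1$, notes $|X_i|\le n-1$ and $\EE[X_i^2]\le 8$, and invokes Bernstein's inequality as a black box; the hypothesis $n-1\ge 48\,\varepsilon^{-1}$ is then used to absorb the variance term $8k$ into the range term $(n-1)k\varepsilon/3$ in Bernstein's denominator, yielding $\exp(-k\varepsilon/(n-1))$. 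You instead apply Jensen's inequality across $j$ first (exactly as in the proof of Theorem~\ref{tm:traceest}) and hand-roll the moment generating function bound $\EE[e^{tz^2}]\le 1+t+\tfrac{9}{n^2}(e^{tn}-1-tn)$, then fix $t=2/n$; this is in effect a re-derivation of Bernstein's inequality specialized to this setting, and your closing numerics check out: $9(e^2-3)\approx 39.5<48$ makes the required inequality $48\,n(n-2)\ge 9(e^2-3)(n-1)^2$ hold for $n\ge 4$, while for $n\le 3$ the hypothesis forces $\varepsilon\ge 24$ so that $\mathsf{Est}_k\le n\,\trace(B)\le(1+\varepsilon)\trace(B)$ almost surely and the failure probability is indeed zero. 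Neither route is stronger than the other and both land on the identical bound; the paper's is shorter, while yours makes more transparent that the cap $z^2\le n$ is the sole source of the $1/(n-1)$ penalty, the linear dependence on $\varepsilon$, and the side condition --- a diagnosis that matches the paper's own remarks following Theorem~\ref{tm:tracelower}.
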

\begin{proof} Along the lines of the first part of the proof of Theorem~\ref{tm:traceest}, it can be shown that the statement of the theorem is equivalent to showing that $\exp(-k\varepsilon)$ is an upper bound on
\[
 \mathbb P\Big\{ \sum_{j = 1}^n \frac{\lambda_j}{\trace(B)} \sum_{i = 1}^k z_{ij}^2 > k(\varepsilon + 1)\Big\} =
 \mathbb P\Big\{ \sum_{i = 1}^k X_i > k  \varepsilon\Big\}.
\]
Here,
\[
 X_i = \sum_{j = 1}^n \frac{\lambda_j}{\trace(B)}  z_{ij}^2 - 1.
\]
are zero-mean independent random variables that are bounded; using $|z_{ij}| \le |\hat x_i|^T |U_j| |\tilde x_i| \le \sqrt{n}$ with $\text{vec}(U_j) = u_j$ one obtains
$|X_i| \le n-1$. Using that $\EE[z_{ij}^2] = 1$ and $\EE[z_{ij}^4] = 9$, it can be shown that $\EE[X_i^2] \le 8$.
Plugging these bounds into Bernstein's inequality completes the proof:
\[
  \mathbb P\Big\{ \sum_{i = 1}^k X_i > k  \varepsilon\Big\} \le \exp\Big[- \frac{  k^2\varepsilon^2 / 2 }{ 8k +  (n-1) k\varepsilon / 3 } \Big] \le \exp( - k\change{\varepsilon/(n-1)} ) ,
\]
where we used the imposed condition on \change{$n$} in the second inequality.
\end{proof}

\newcommand{\Est}{\mathsf{Est}_k}

\change{%
The proof technique of Theorem~\ref{tm:tracelowerrademacher} does not extend to the Gaussian case.
For $k = 1$, Vershynin~\cite{Ver19} has derived two-sided bounds for the more general setting that the vectors $\tilde x_1,\hat x_1$ have i.i.d.\@ entries from a sub-Gaussian distribution. In the following, we will use the techniques from~\cite{Ver19} to establish lower bounds that, one the one hand, apply to general $k\ge 1$ and, on the other hand, provide specific constants for Gaussian and Rademacher vectors. For this purpose, we will make use of the following result on coupled second-order Gaussian and Rademacher chaos, which can be extracted from the proofs of~Lemma 1 in \cite{Laurent2000} 
and Theorem 8 in~\cite{Cortinovis2020}, respectively.

\begin{lemma} \label{lemma:chaos}
Let $B \in \R^{n\times n}$ be symmetric positive semi-definite. Then 
\[
 \log \EE\big[\exp\!\big( t (x^T B x - \trace B) \big) \big] \le \frac{c t^2 \|B\|_F^2 }{1-2c t\|B\|_2}, \quad \text{for} \quad 0\le t < \frac{1}{2c\|B\|_2},
\]
holds with $c = 1$ when $x$ is a standard Gaussian random vector and with $c = 2$ when $x$ is a Rademacher vector.
\end{lemma}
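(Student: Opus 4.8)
The plan is to prove the two bounds separately; in both cases the strategy is to reduce $\log \EE[\exp(t(x^T B x - \trace B))]$ to a sum of scalar terms controlled by elementary calculus. The Gaussian bound is exactly the estimate underlying \cite[Lemma 1]{Laurent2000}, and the Rademacher bound the one underlying \cite[Theorem 8]{Cortinovis2020}. For the Gaussian case, write the spectral decomposition $B = U \diag(\lambda_1,\dots,\lambda_n) U^T$ with $\lambda_i \ge 0$, so that $\sum_i \lambda_i^2 = \|B\|_F^2$ and $\lambda_1 = \|B\|_2$. Since $x \sim \normal(0,I_n)$ is rotationally invariant, $U^T x \sim \normal(0,I_n)$, and hence $x^T B x - \trace B$ has the same law as $\sum_{i} \lambda_i (g_i^2 - 1)$ with i.i.d.\ $g_i \sim \normal(0,1)$. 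Using $\EE[e^{s(g_i^2-1)}] = e^{-s}(1-2s)^{-1/2}$ for $s < 1/2$, one obtains for $0 \le t < 1/(2\|B\|_2)$ that
\[
  \log \EE\big[\exp(t(x^T B x - \trace B))\big] \;=\; \sum_{i=1}^n \Big( -\lambda_i t - \tfrac12 \log(1-2\lambda_i t) \Big).
\]
It then remains to apply the elementary inequality $-u - \log(1-u) \le \tfrac{u^2}{2(1-u)}$, valid for $u \in [0,1)$ (the difference vanishes at $u=0$ and has nonnegative derivative $\tfrac{u^2}{2(1-u)^2}$), with $u = 2\lambda_i t$, and then $1 - 2\lambda_i t \ge 1 - 2\|B\|_2 t$, to bound the right-hand side by $\sum_i \tfrac{\lambda_i^2 t^2}{1-2\lambda_i t} \le \tfrac{t^2 \|B\|_F^2}{1 - 2t\|B\|_2}$, which is the claim with $c=1$.

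For the Rademacher case, since $x_i^2 = 1$ we have $x^T B x - \trace B = x^T E x$, where $E := B - \diag(B)$ is symmetric with zero diagonal. Clearly $\|E\|_F \le \|B\|_F$. Moreover, writing $D := \diag(B)$, positive semidefiniteness of $B$ gives $0 \le B_{ii} \le \|B\|_2$, hence $0 \preceq D \preceq \|B\|_2 I$, and therefore both $E = B - D \preceq B \preceq \|B\|_2 I$ and $-E = D - B \preceq D \preceq \|B\|_2 I$, i.e.\ $\|E\|_2 \le \|B\|_2$. Consequently it suffices to bound the moment generating function of the zero-diagonal Rademacher chaos $x^T E x$ in terms of $\|E\|_F$ and $\|E\|_2$.

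That bound is obtained along the lines of the proof of \cite[Theorem 8]{Cortinovis2020}, via decoupling. Introduce i.i.d.\ Bernoulli$(1/2)$ selectors $\delta_i$ and set $I = \{i : \delta_i = 1\}$; then $x^T E x = 4\,\EE_\delta[x_I^T E_{I,I^c} x_{I^c}]$, so Jensen's inequality reduces the task to bounding $\EE_x[\exp(4t\,x_I^T E_{I,I^c} x_{I^c})]$ for each fixed split. Conditioning on $x_{I^c}$ turns $4t\,x_I^T E_{I,I^c} x_{I^c}$ into a Rademacher sum, whose conditional moment generating function is at most $\exp(8t^2 \|E_{I,I^c} x_{I^c}\|_2^2)$ by $\cosh(a) \le e^{a^2/2}$; and $\|E_{I,I^c} x_{I^c}\|_2^2 = x_{I^c}^T M x_{I^c}$ with $M := E_{I^c,I} E_{I,I^c} \succeq 0$, $\trace M \le \|E\|_F^2$, $\|M\|_2 \le \|E\|_2^2$. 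The residual quadratic form in the independent Rademacher vector $x_{I^c}$ is then linearized by the Gaussian-integral identity $e^{s\,y^T M y} = \EE_g[\exp(\sqrt{2s}\,g^T M^{1/2} y)]$, which, after Fubini and another application of $\cosh(a) \le e^{a^2/2}$, bounds it by $\EE_g[\exp(s\,g^T M g)]$, to which the Gaussian estimate of the first paragraph applies (with trace correction $s\,\trace M$). Collecting the contributions and simplifying the resulting scalar expression yields a bound of the stated form. This assembly is the main obstacle: a coarse use of the decoupling inequality loses more than an absolute constant — both through the decoupling factor and through the trace correction $s\,\trace M$ — so obtaining the precise value $c=2$ requires the careful accounting of \cite[Theorem 8]{Cortinovis2020}, in which the estimate $\|E\|_2 \le \|B\|_2$ derived above keeps the constant from degrading further; by contrast, the Gaussian case is entirely routine.
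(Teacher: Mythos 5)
Your Gaussian half is complete and correct: diagonalizing $B$, using rotational invariance to reduce to $\sum_i \lambda_i(g_i^2-1)$, the exact identity $\EE[e^{s(g^2-1)}]=e^{-s}(1-2s)^{-1/2}$, and the elementary inequality $-u-\log(1-u)\le \tfrac{u^2}{2(1-u)}$ together give precisely $\sum_i \lambda_i^2t^2/(1-2\lambda_i t)\le t^2\|B\|_F^2/(1-2t\|B\|_2)$. This is exactly the Laurent--Massart computation that the paper itself points to for the case $c=1$ (the paper gives no proof of the lemma; it only states that the bound can be extracted from the proofs of Lemma 1 in \cite{Laurent2000} and Theorem 8 in \cite{Cortinovis2020}), so for the Gaussian case you have in fact supplied more detail than the paper does, and correctly.

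The Rademacher half, however, contains a genuine gap. The reduction to the off-diagonal part $E=B-\diag(B)$ and the norm bounds $\|E\|_F\le\|B\|_F$ and $\|E\|_2\le\|B\|_2$ (via $0\preceq \diag(B)\preceq\|B\|_2 I$) are correct and useful. But the chain you then sketch --- decoupling with the factor $4$, Hoeffding's lemma, the Gaussian-comparison trick, and finally the Gaussian MGF bound applied to $g^T M g$ with $s=8t^2$ --- cannot yield $c=2$: already the leading-order term is $8t^2\trace M$, and even using the best available bound $\trace M=\|E_{I,I^c}\|_F^2\le\tfrac12\|E\|_F^2$ this is $4t^2\|B\|_F^2$, twice the required $2t^2\|B\|_F^2$, before any higher-order terms are considered. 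The loss is structural (the decoupling constant enters squared through Hoeffding's lemma), so it is not a matter of ``careful accounting'' within this route; the constant $c=2$ requires the different argument actually carried out in \cite{Cortinovis2020}. Since you ultimately defer to that reference, the Rademacher case is established only by citation --- which matches what the paper does, but means your proposal does not constitute an independent proof of that half of the lemma.
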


In the following, we will use the weaker bound $2 c t^2 \|B\|_2 \cdot \trace B$, which holds for  $t \le 1 / (4c\|B\|_2)$.

\begin{theorem}
    \label{tm:tracelower}
    Consider the trace estimator $\mathsf{Est}_k$ defined in~\eqref{eq:tracestimator} and suppose that $\tilde n \le \hat n$.
    Then the bound
    \[
       \trace(B) \ge \frac{1}{1 + \varepsilon} \mathsf{Est}_k
    \]
    holds for independent standard Gaussian random vectors $\tilde x_i$, $\hat x_i$ with probability at least
    \begin{equation} \label{eq:gaussianbound}
     1 - \exp\Big(- \frac{k\rho\varepsilon^2}{50 \hat{n} } \Big) - k \frac{1}{5^{\hat{n}/2}}, \quad \text{if} \quad 0 < \varepsilon < \frac{25}{16},
    \end{equation}
    and for independent Rademacher vectors $\tilde x_i$, $\hat x_i$ with probability at least
        \begin{equation} \label{eq:rademacherbound}
     1 - \exp\Big(- \frac{k\rho\varepsilon^2}{52 \hat{n} } \Big), \quad \text{if} \quad 0 < \varepsilon < \frac{13}{4}.
        \end{equation}
Here, $\rho = \trace{B} / \|B\|_2$ is the stable rank of $A$.
\end{theorem}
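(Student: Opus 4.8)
The plan is to follow the Chernoff-bound strategy used in the proof of Theorem~\ref{tm:traceest}, but now for the \emph{upper} tail of $\mathsf{Est}_k$, and to handle the rank-one structure by a conditioning argument in the spirit of Vershynin~\cite{Ver19}. Writing $x_i=\tilde x_i\otimes\hat x_i$ and $z_i=x_i^TBx_i\ge0$, the asserted inequality fails exactly when $\sum_{i=1}^k z_i>k(1+\varepsilon)\trace(B)$, so for every $t>0$ independence across $i$ gives
\[
 \PP{\mathsf{Est}_k>(1+\varepsilon)\trace(B)}\le e^{-tk(1+\varepsilon)\trace(B)}\prod_{i=1}^k\EE\big[e^{tz_i}\big].
\]
The difficulty relative to Theorem~\ref{tm:traceest} is that $x_i$ is not a standard Gaussian or Rademacher vector, so Lemma~\ref{lemma:chaos} cannot be applied to $z_i$ directly.

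To get around this, I would condition on $\hat x_i$. Grouping the entries of $x_i$ by the outer ($\tilde n$-dimensional) factor and writing $B=(B_{kl})_{k,l=1}^{\tilde n}$ with $B_{kl}\in\R^{\hat n\times\hat n}$, one has $z_i=\tilde x_i^TC_i\tilde x_i$ with $C_i=\big(\hat x_i^TB_{kl}\hat x_i\big)_{k,l}\in\R^{\tilde n\times\tilde n}$. Since $w^TC_iw=(w\otimes\hat x_i)^TB(w\otimes\hat x_i)$, the matrix $C_i$ is symmetric positive semidefinite, $\|C_i\|_2\le\|B\|_2\|\hat x_i\|_2^2$, and $\trace(C_i)=\hat x_i^TM\hat x_i$ with $M=\sum_{k=1}^{\tilde n}B_{kk}$, which satisfies $M\succeq0$ and $\trace M=\trace B$. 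Now $z_i$ is a genuine second-order chaos in the Gaussian/Rademacher vector $\tilde x_i$, so the weaker form of Lemma~\ref{lemma:chaos} applies conditionally on $\hat x_i$ once $\|C_i\|_2$ is controlled. For Rademacher vectors $\|\hat x_i\|_2^2=\hat n$, hence $\|C_i\|_2\le\hat n\|B\|_2$ deterministically; for Gaussian vectors I would intersect with the event $E_i=\{\|\hat x_i\|_2^2\le4\hat n\}$, whose complement has probability at most $(4e^{-3})^{\hat n/2}<5^{-\hat n/2}$ by Lemma~\ref{lem:chi_bound}, so that $\|C_i\|_2\le4\hat n\|B\|_2$ on $E_i$. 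On the relevant event, and for $t$ below an explicit fraction of $1/(\hat n\|B\|_2)$, this yields
\[
 \EE_{\tilde x_i}\big[e^{tz_i}\mid\hat x_i\big]\le\exp\big(s\,\hat x_i^TM\hat x_i\big),\qquad s:=t+8c\hat n t^2\|B\|_2,
\]
with $c=1$ for Gaussian and $c=2$ for Rademacher vectors (in the latter case the factor $4\hat n$ may be replaced by $\hat n$ throughout, improving the constants).

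It then remains to take the expectation over $\hat x_i$. For Gaussian vectors a union bound over $i$ peels off the events $E_i^c$ and produces the additive term $k\,5^{-\hat n/2}$ in~\eqref{eq:gaussianbound}, while on $\bigcap_iE_i$ — and directly in the Rademacher case — we are left with $\prod_i\EE_{\hat x_i}\big[\exp(s\,\hat x_i^TM\hat x_i)\big]$. Applying Lemma~\ref{lemma:chaos} a second time, now to the quadratic form $\hat x_i^TM\hat x_i$ in $\hat x_i$ and using $\trace M=\trace B$ together with $\|M\|_2\le\tilde n\|B\|_2\le\hat n\|B\|_2$, gives for $s$ again below an explicit fraction of $1/(\hat n\|B\|_2)$
\[
 \EE_{\hat x_i}\big[e^{s\,\hat x_i^TM\hat x_i}\big]\le\exp\big(s\,\trace B+2cs^2\hat n\|B\|_2\,\trace B\big).
\]
Here the bound $\|M\|_2\le\tilde n\|B\|_2$ comes from $v^TMv=\sum_{k=1}^{\tilde n}(e_k\otimes v)^TB(e_k\otimes v)$ and the orthogonality of the vectors $e_k\otimes v$, and this is exactly the step that uses $\tilde n\le\hat n$. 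Combining the two applications, writing $\rho=\trace B/\|B\|_2$, and taking logarithms, the failure probability is bounded by $k\,5^{-\hat n/2}$ (Gaussian only) plus $\exp\big(k\,\trace B\,[-t(1+\varepsilon)+s+2cs^2\hat n\|B\|_2]\big)$, and it only remains to optimize $t$: parametrizing $t$ as a dimensionless multiple $\beta$ of $1/(c\hat n\|B\|_2)$ turns the bracket into a quadratic in $\beta$ whose minimum is negative and of order $\varepsilon^2/(c\hat n\|B\|_2)$; tracking the admissible range of $\beta$ — which is precisely what forces $\varepsilon<25/16$ for $c=1$ and $\varepsilon<13/4$ for $c=2$ — and keeping the constants yields the exponents $k\rho\varepsilon^2/(50\hat n)$ and $k\rho\varepsilon^2/(52\hat n)$.

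I expect the main obstacle to be the interaction of the two chaos bounds. The conditioning is what makes Lemma~\ref{lemma:chaos} applicable at all, but it pushes the ``outer'' quadratic form $\hat x_i^TM\hat x_i$ into the estimate, and the partial trace $M$ can genuinely have operator norm as large as $\tilde n\|B\|_2$ — for instance $B=\sum_{k=1}^{\tilde n}(e_k\otimes w)(e_k\otimes w)^T$ with a fixed unit vector $w$ — so a factor of roughly $\hat n$ in the exponent is unavoidable and the restriction $\tilde n\le\hat n$ is natural. The second nuisance is specific to the Gaussian case: $\|C_i\|_2$ is a priori unbounded, so the truncation $E_i$, the extra term $k\,5^{-\hat n/2}$, and the upper restriction on $\varepsilon$ are all intrinsic to this route. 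Once this structure is set up, extracting the precise constants $50$ and $52$ and the thresholds $25/16$, $13/4$ is a matter of carefully bookkeeping the admissible sizes of $t$ and $s$.
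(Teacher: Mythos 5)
Your proposal is correct and follows essentially the same route as the paper's proof: conditioning on $\hat x_i$ so that Lemma~\ref{lemma:chaos} can be applied twice (first to the conditional quadratic form $\tilde x_i^T C_i \tilde x_i$ with $\|C_i\|_2\le\|\hat x_i\|_2^2\|B\|_2$, then to the partial-trace form $\hat x_i^T M\hat x_i$ with $\|M\|_2\le\tilde n\|B\|_2$), using the same truncation $\|\hat x_i\|_2^2\le 4\hat n$ in the Gaussian case, and arriving at the same constants $25/2$ and $13$ in the quadratic exponent and hence the same thresholds $25/16$ and $13/4$. The only cosmetic difference is that you merge the two moment-generating-function bounds by iterated conditional expectation sample by sample, whereas the paper stacks the $k$ samples into a single block-diagonal quadratic form and invokes Lemma 4.1 of~\cite{Ver19} for the merging step.
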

\begin{proof}
We first consider the Gaussian case. As the proof follows closely the arguments in~\cite{Ver19}, we will keep it relatively brief. The main idea of~\cite{Ver19} is to use marginalization to separate $\tilde x_i$ from $\hat x_i$. In order to express this conveniently, we let $\tilde{\bf x}=(\tilde{x}_1, \ldots, \tilde{x}_k)$, $\hat{\bf x}=(\hat{x}_1, \ldots, \hat{x}_k)$, and set $f(\tilde{\bf x}, \hat{\bf x}) :=  \Est$.

    Considering the event $\mathcal{E}=\{(\tilde{\bf x}, \hat{\bf x}) \;:\; \|\hat{x}_j\| \leq 2 \hat{n}^{1/2}, \; \text{ for all } j=1, \ldots, k \}$, we have that
    \begin{align}
        \PP{ \Est - \trace{B} \geq \theta }
            &\leq \PP{ \Est - \trace{B} \geq \theta \text{ and } \mathcal{E} } + \PP{\mathcal{E}^c} \nonumber \\
            &= \PP{e^{t (\Est - \trace{B}) }\geq e^{t \theta} \text{ and } \mathcal{E} } + \PP{\mathcal{E}^c} \nonumber \\
            &= \PP{e^{t (\Est - \trace{B}) } \cdot \mathbf{1}_\mathcal{E} (\tilde{\bf x}, \hat{\bf x}) \geq e^{t\theta} } + \PP{\mathcal{E}^c} \nonumber \\
            &\leq e^{-t \theta} \EE[e^{t (\Est - \trace{B}) } \cdot \mathbf{1}_\mathcal{E}(\tilde{\bf x}, \hat{\bf x}) ] + \PP{\mathcal{E}^c},
            \label{eq:large:gaussnew}
    \end{align}
    where $\mathbf{1}$ denotes the indicator function. By Lemma~\ref{lem:chi_bound}, $
        \PP{\mathcal{E}^c} \leq k(0.2)^{\hat{n}/2}
    $.

    The exponent in~\eqref{eq:large:gaussnew} is separated via conditional expectation:
    \begin{align*}
     \Est - \trace{B} &= f(\tilde{\bf x}, \hat{\bf x}) - \EE [f(\tilde{\bf x}, \hat{\bf x})] 
     = f(\tilde{\bf x}, \hat{\bf x}) - \EE_{\tilde{\bf x}} [f(\tilde{\bf x}, \hat{\bf x})] + \EE_{\tilde{\bf x}} [f(\tilde{\bf x}, \hat{\bf x})] - \EE [f(\tilde{\bf x}, \hat{\bf x})] \\
     &= f(\tilde{\bf x}, \hat{\bf x}) - \EE_{\tilde{\bf x}} [f(\tilde{\bf x}, \hat{\bf x})] + \hat f (\hat{\bf x}) - \EE_{\hat{\bf x}} [\hat f (\hat{\bf x})], \quad \hat f (\hat{\bf x}):= \EE_{\tilde{\bf x}} [f(\tilde{\bf x}, \hat{\bf x})].
    \end{align*}
    We now consider $\hat{\bf x}$ fixed such that $\|\hat{x}_j\|_2 \leq 2\hat{n}^{1/2}$ for $j=1, \dots, k$. Viewing $\tilde{\bf x}$ as a Gaussian vector of length $k\tilde n$  we can write
    \[
    f(\tilde{\bf x}, \hat{\bf x}) = \tilde{\bf x}^T \tilde B \tilde{\bf x}, \quad
    \tilde B := \frac 1k \text{diag}\big( (I \otimes \hat x_1)^T B ( I \otimes \hat x_1 ), \ldots, (I \otimes \hat x_k)^T B ( I \otimes \hat x_k ) \big).
    \]
    Noting that $\trace \tilde B = \EE_{\tilde{\bf x}} [f(\tilde{\bf x}, \hat{\bf x})]$ and $\|\tilde B\|_2 \le \frac4k \hat{n} \|B\|_2$, Lemma~\ref{lemma:chaos} applied to $\tilde{\bf x}^T \tilde B \tilde{\bf x}$  implies for all $0 \le t \leq \frac{k}{16 \hat{n} \|B\|_2}$ that
    \[
     \log \EE_{\tilde{\bf x}} \big[\exp\!\big( t (f(\tilde{\bf x}, \hat{\bf x}) - \EE_{\tilde{\bf x}} [f(\tilde{\bf x}, \hat{\bf x}) ] \big) \big] \le 
     \frac{8 t^2 \hat n }{k}  \|B\|_2 \cdot \EE_{\tilde{\bf x}} [f(\tilde{\bf x}, \hat{\bf x})]
    \]
    or, equivalently,
    \[
     \log \EE_{\tilde{\bf x}}\big[\exp\!\big( t f(\tilde{\bf x}, \hat{\bf x}) - t_1 \EE_{\tilde{\bf x}} [f(\tilde{\bf x}, \hat{\bf x}) ] \big) \big] \le 
0
    \]
    with $t_1 = t + \frac{8t^2 \hat{n}}{k} \|B\|_2$.
    Because $\hat f(\hat{\bf x}) = \frac1k \sum_{i = 1}^k \hat x_i^T \hat B \hat x_i$, where $\hat B$ is obtained from $B$ by partitioning it into $\hat n\times \hat n$ blocks and summing up the diagonal blocks, we obtain in an analogous fashion
    \[
     \log \EE_{\hat{\bf x}}\big[\exp\!\big( t_1 (\hat f(\hat{\bf x}) - \EE_{\hat{\bf x}} [\hat f(\hat{\bf x} ) ] \big) \big] \le 
     \frac{2 t_1^2 \tilde n }{k}  \|B\|_2 \cdot \trace B.
    \]
    This requires $0 \le t_1 \leq \frac{k}{4 \tilde{n} \|B\|_2}$, which is implied by $\tilde n\le \hat n$ and the condition on $t$. Lemma 4.1 in~\cite{Ver19} now allows us to merge both bounds and obtain 
    \[
    \log \EE\big[e^{t \Est - t_1 \trace{B}) } \cdot \mathbf{1}_\mathcal{E}(\tilde{\bf x}, \hat{\bf x}) \big] \le \frac{2 t_1^2 \tilde n }{k}  \|B\|_2 \cdot \trace B.
    \]
    Using $t_1\le \frac32 t$, this implies 
    $
    \log \EE\big[e^{t ( \Est - \trace{B}) } \cdot \mathbf{1}_\mathcal{E}(\tilde{\bf x}, \hat{\bf x}) \big] \le \frac{25 t^2 \hat n }{2k}  \|B\|_2 \cdot \trace B,
    $
which, when plugged into \eqref{eq:large:gaussnew}, gives
    $$
        \PP{ \Est - \trace{B} \geq \theta }
            \leq e^{-t \theta + t^2 \cdot \frac{25}{2k} \hat{n} \|B\|_2 \cdot \trace B} + k(0.2)^{\hat{n}/2}.
    $$
    Choosing the minimizing $t = \frac{\theta k}{25 \hat{n} \|B\|_2 \cdot \trace B}$, which is valid for $\theta \leq 25/16 \cdot \trace B$, and setting
    $\varepsilon = \theta / \trace{B}$ yields the desired result~\eqref{eq:gaussianbound}.
    
    The result~\eqref{eq:rademacherbound} for Rademacher vectors is proven in the same fashion, with the simplification that $\|\hat{x}_j\|_2 = \hat n^{-1/2}$ holds with probability $1$.
    \end{proof}
    
    A few remarks on the results of Theorem~\ref{tm:tracelower} are in order. Although the second term in~\eqref{eq:gaussianbound} increases with $k$, it can be expected to  remain negligible for all practical purposes; norm and trace estimates are primarily of interest for larger values of $\hat n$. In the Rademacher case, the bound~\eqref{eq:rademacherbound} of Theorem~\ref{tm:tracelower} features the factor $\rho/(50 \hat{n})$ in the exponent, which appears to be more favorable than the corresponding factor $1/n$ in Theorem~\ref{tm:tracelowerrademacher}. On the other hand, even for small values of $k$ a rough lower bound with high probability can be obtained via Theorem~\ref{tm:tracelowerrademacher} by choosing $\varepsilon$ sufficiently large. This is not possible with the results of Theorem~\ref{tm:tracelower} because of the imposed restrictions on $\varepsilon$.}


\section{Numerical Experiments} \label{sec:applications}

\subsection{Performance of standard and rank-one estimators} \label{sec:testmatrices}

The purpose of this section is to numerically assess the impact of using rank-one random vectors instead of unstructured random vectors on the performance of the trace estimator $\mathsf{Est}_k$; see~\eqref{eq:tracestimator}. For this purpose, the following $8$ examples have been chosen to illustrate different aspects of the estimators.
\begin{description}
 \item[{\sf Ones},] matrix of all ones; $n = 2\,500$, $\tilde n = \hat n = 50$, estimation of $\trace(A) = 2\,500$.
 \item[{\sf Rank-one,}] matrix $v v^T$ where $v = \mathrm{vec}(I_{\tilde n})$ with the identity matrix $I_{\tilde n}$;
 $n = 2\,500$, $\tilde n = \hat n = 50$, estimation of $\trace(A) = 50$.
 \item[{\sf ACTIVSg2000,}] from the SuiteSparse Matrix Collection~\cite{Davis2011} originating from a synthetic electric grid model~\cite{Birchfield2017};
 $n = 4\,000$, $\tilde n = 80$, $\hat n = 50$, estimation of $\|A^{-1}\|_F^2 = \trace( A^{-T} A^{-1}  ) \approx 1.5\times 10^{4}$.
 \item[{\sf ACTIVSg10K},] same source as {\sf ACTIVSg2000} but now $n = 20\,000$, $\tilde n = 200$, $\hat n = 100$, estimation of $\|A^{-1}\|_F^2 \approx 1.3\times 10^{5}$.
 \item[{\sf CFD,}] from the SuiteSparse Matrix Collection~\cite{Davis2011} (matrix Rothberg/cfd1) and originating from the discretization of a fluid dynamics problem; $n = 70\,656$, $\tilde n = 276$, $\hat n = 256$, estimation of $\trace(A) = 70\,656$. 
 \item[{\sf CFDinv,}] identical with {\sf CFD} but estimation of $\trace(A^{-1}) \approx 6.0 \times 10^5$ instead. 
 \item[{\sf Laplace,}] matrix from second-order finite difference discretization of Poisson equation on unit square; 
 $n = 2\,500$, $\tilde n = \hat n = 50$, estimation of $\trace(A^{-1}) \approx 0.61$.
 \item[{\sf Convdiff,}] matrix from finite difference discretization of convection-diffusion equation on unit square (matrix from~\cite[Sec. 7.2]{Kressner2010} with $c_s = 1$);
 $n = 2\,500$, $\tilde n = \hat n = 50$, estimation of $\|A^{-1}\|_F^2 \approx 0.0042$.
\end{description}
In all examples for which the trace is estimated the involved matrix is symmetric positive semi-definite. The matrices {\sf Ones} and {\sf Rank-one} both have rank one but the stable rank of their (only) singular vector reshaped as a matrix is very different. The discussion in Section~\ref{sec:rademacherone} has singled out {\sf Ones} as a bad example when estimating with Rademacher vectors. {\sf CFD} is an example used in~\cite{Avron2011}. For both, {\sf Laplace} and {\sf Convdiff}, the matrix $A$ can be represented in the form
$C_1 \otimes I_{\hat n} + I_{\tilde n} \change{\otimes} C_2$ for smaller, sparse matrices $C_1,C_2$. Such matrices are of particular interest for our new rank-one estimators because the application of $A^{-1}$ (or $A^{-T}$) corresponds to the solution of a matrix Sylvester equation and such an equation can be solved much more efficiently when the right-hand side has low rank~\cite{Simoncini2016}.

\begin{figure}
\thisfloatpagestyle{empty}
\captionsetup[subfloat]{farskip=0pt,captionskip=0pt}
	\centering	\rotatebox[origin = l]{90}{\parbox{130pt}{\centering {\sf Ones}}}
	\subfloat{\includegraphics[width=0.4\textwidth]{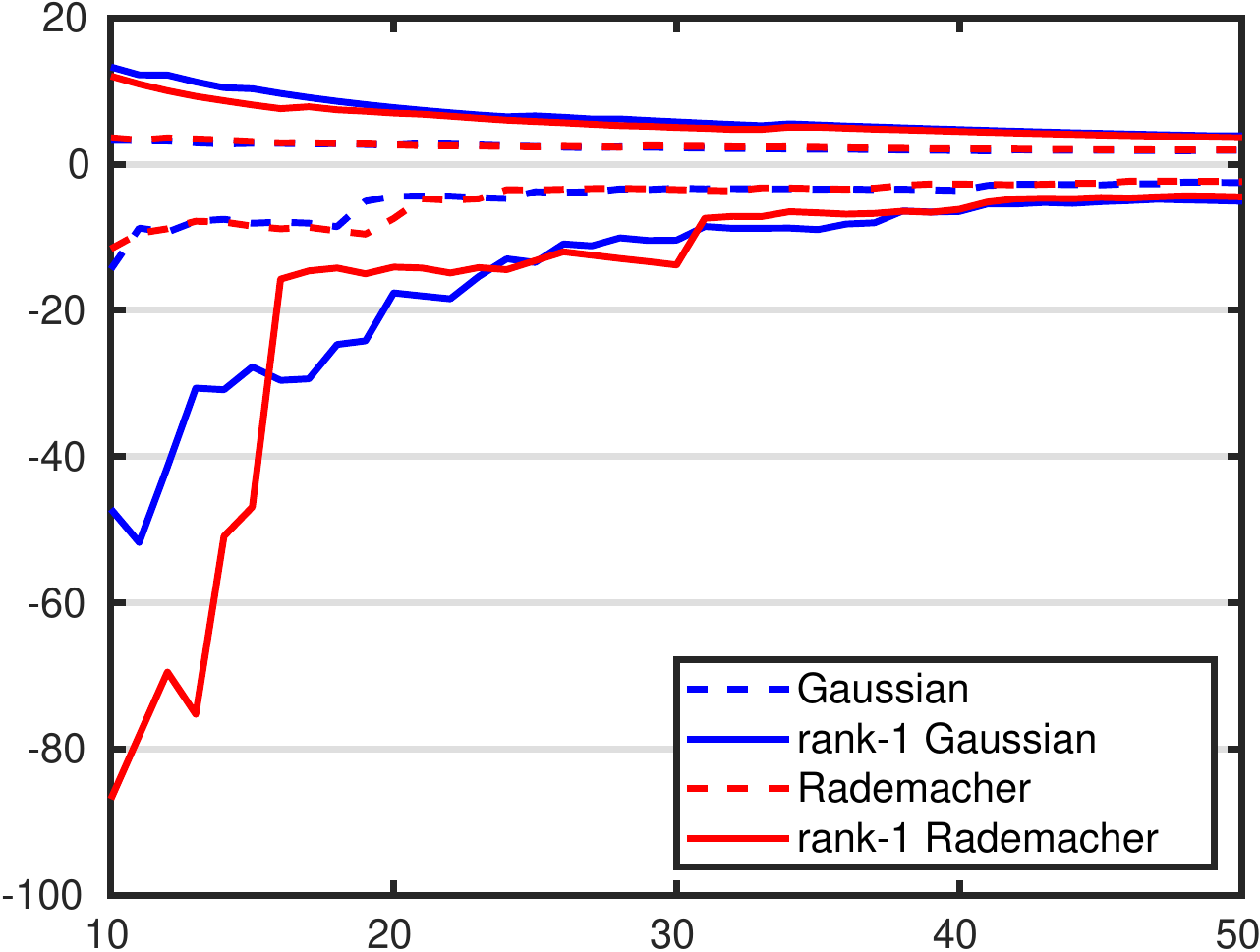}}
	\ \ 
	\rotatebox[origin = l]{90}{\parbox{130pt}{\centering {\sf Rank one}}}
	\subfloat{\includegraphics[width=0.4\textwidth]{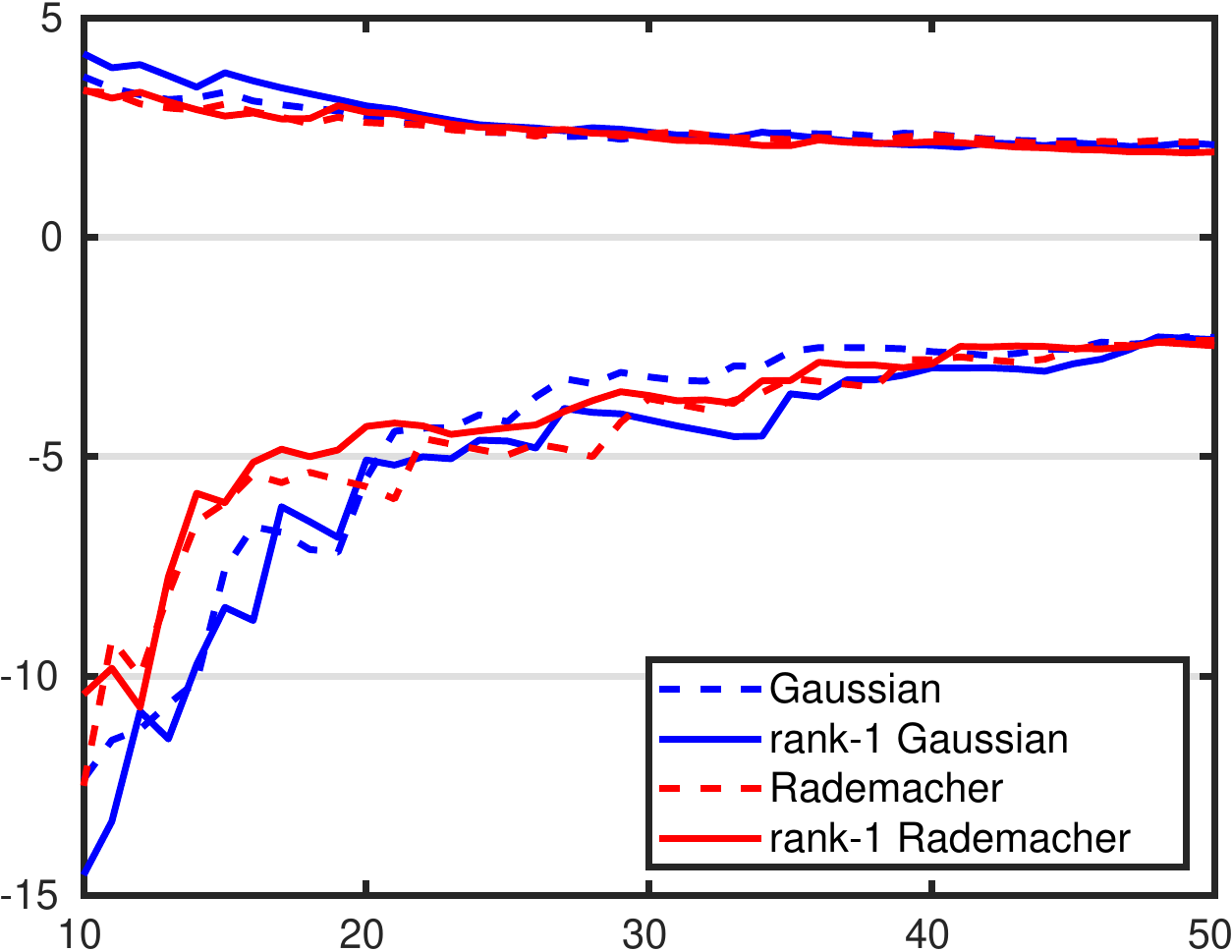}} \\ 
	\rotatebox[origin = l]{90}{\parbox{130pt}{\centering {\sf ACTIVSg2000}}}
	\subfloat{\includegraphics[width=0.4\textwidth]{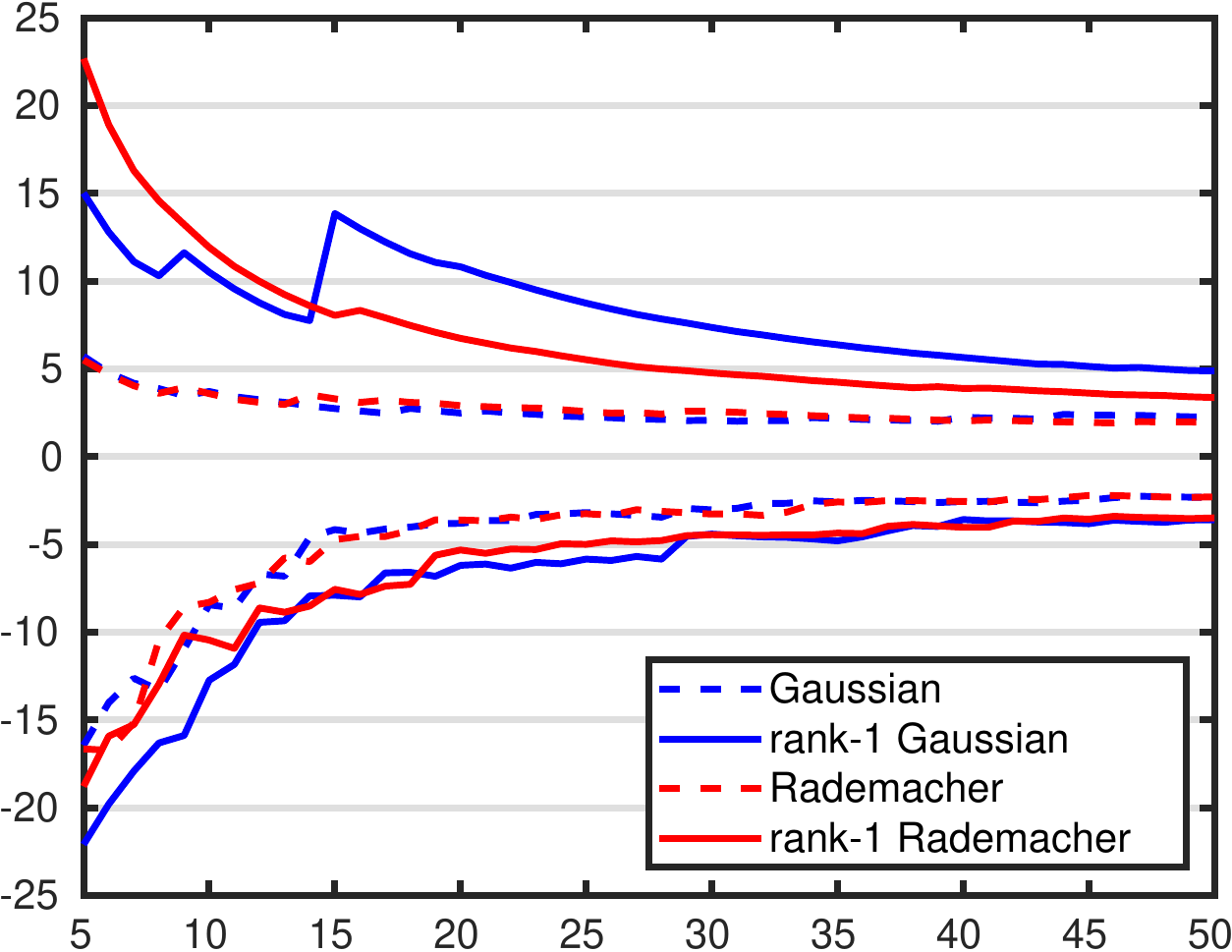}}
	\ \ 
	\rotatebox[origin = l]{90}{\parbox{130pt}{\centering {\sf ACTIVSg10K}}}
	\subfloat{\includegraphics[width=0.4\textwidth]{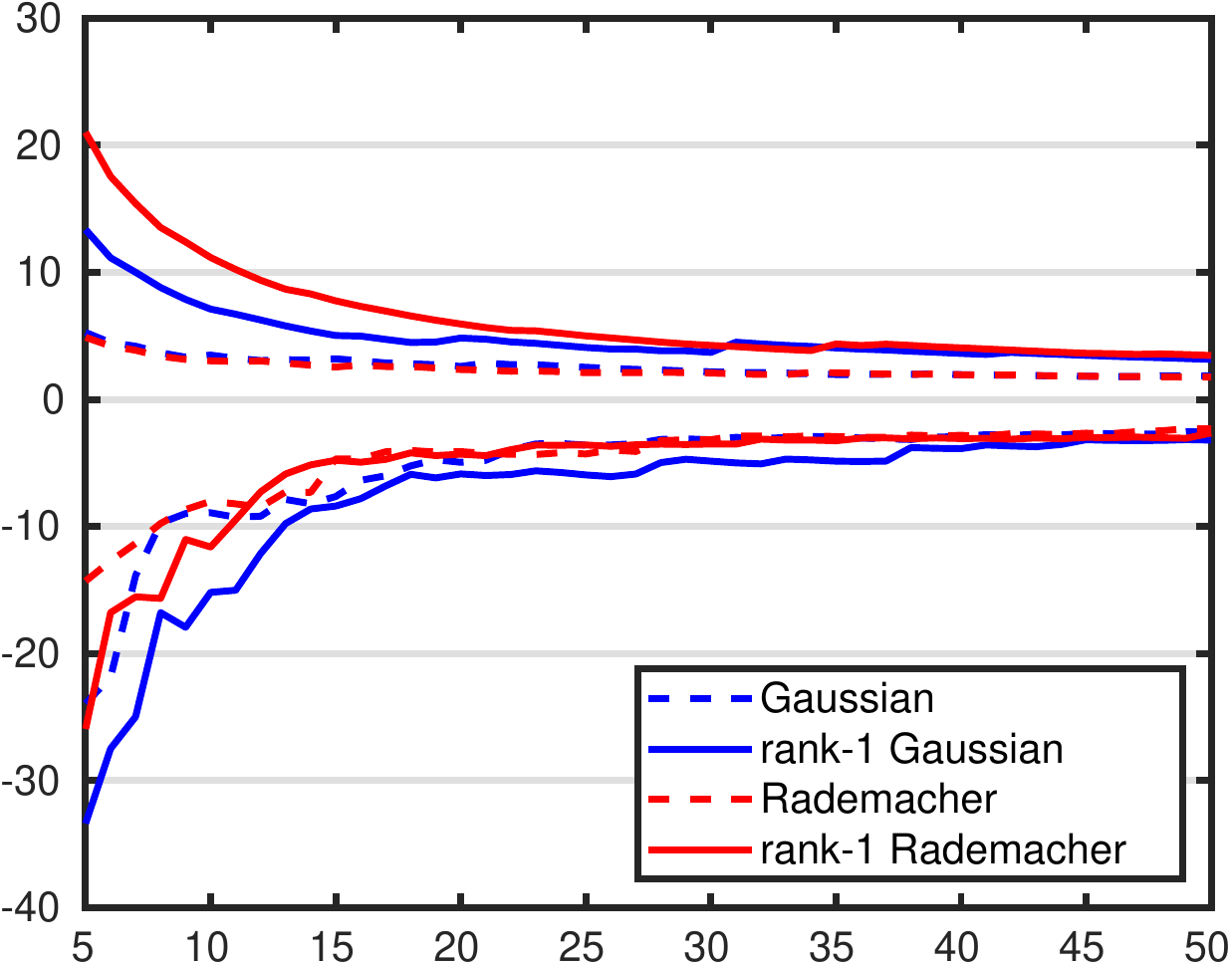}} \\
	\rotatebox[origin = l]{90}{\parbox{130pt}{\centering {\sf CFD}}}
    \subfloat{\includegraphics[width=0.4\textwidth]{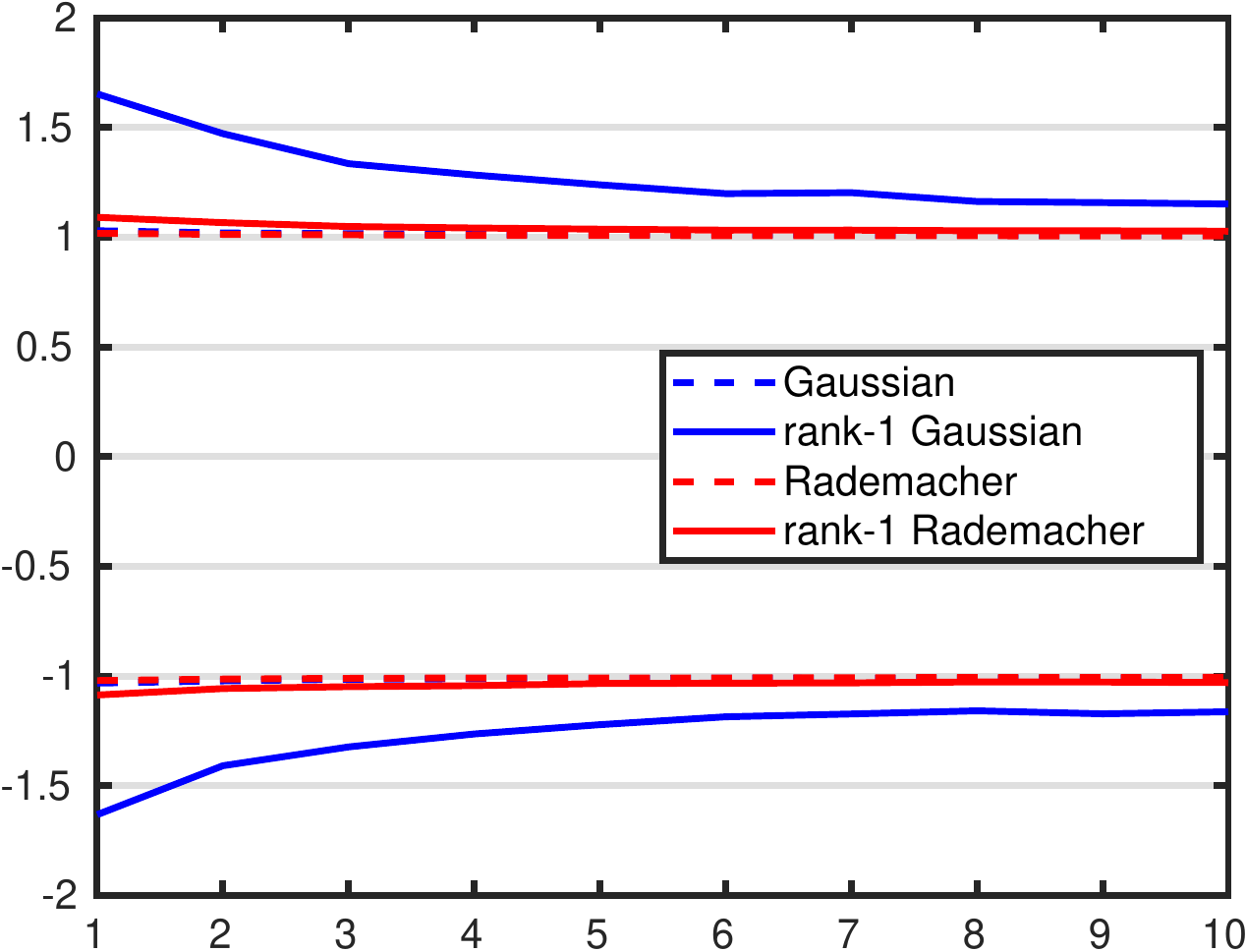}}  
    \ \ 
    \rotatebox[origin = l]{90}{\parbox{130pt}{\centering {\sf CFDinv}}}
	\subfloat{\includegraphics[width=0.4\textwidth]{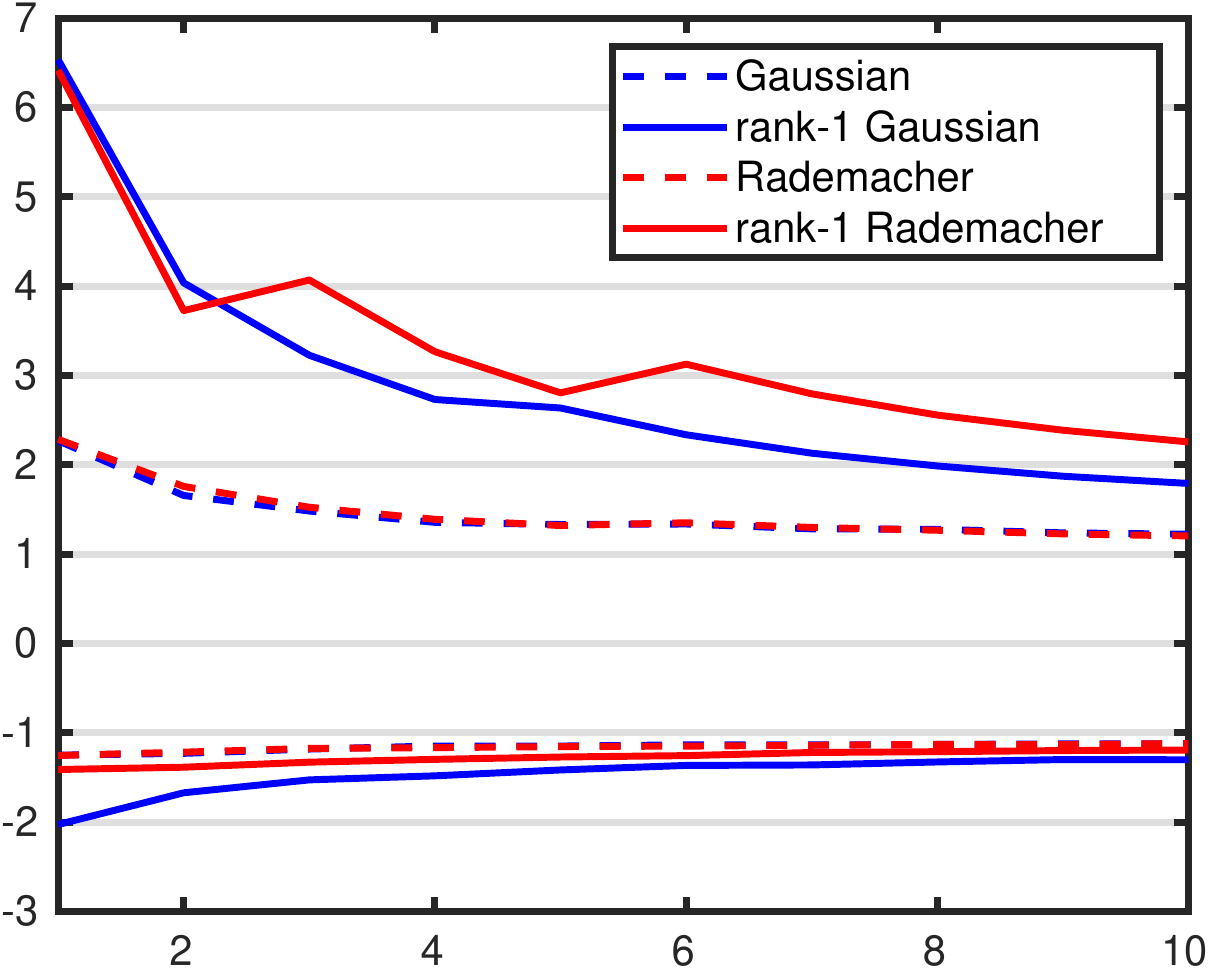}} \\ 
	\rotatebox[origin = l]{90}{\parbox{130pt}{\centering {\sf Laplace}}}
	\subfloat{\includegraphics[width=0.4\textwidth]{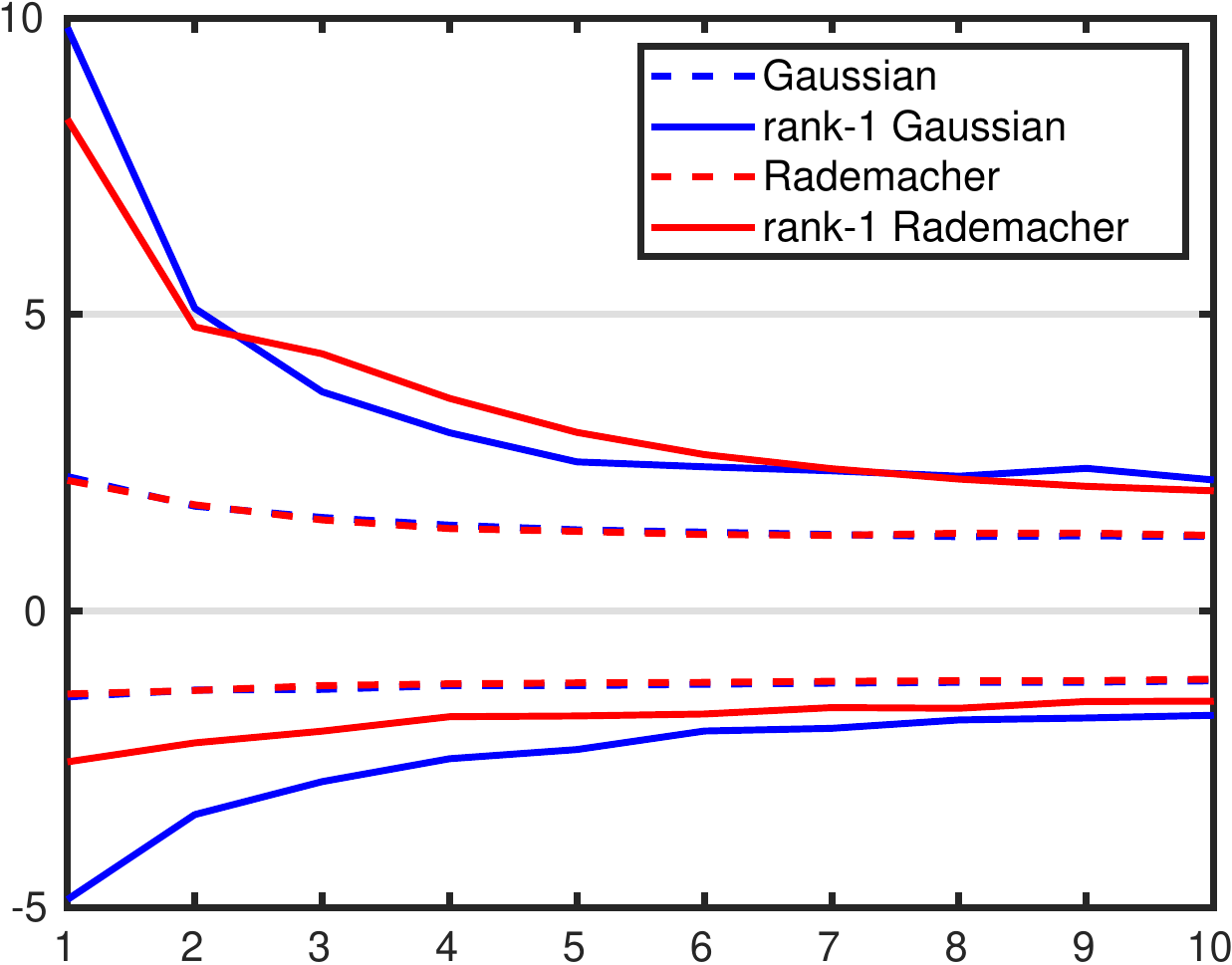}} 
	\ \ 
	\rotatebox[origin = l]{90}{\parbox{130pt}{\centering {\sf Convdiff}}} 
	\subfloat{\includegraphics[width=0.4\textwidth]{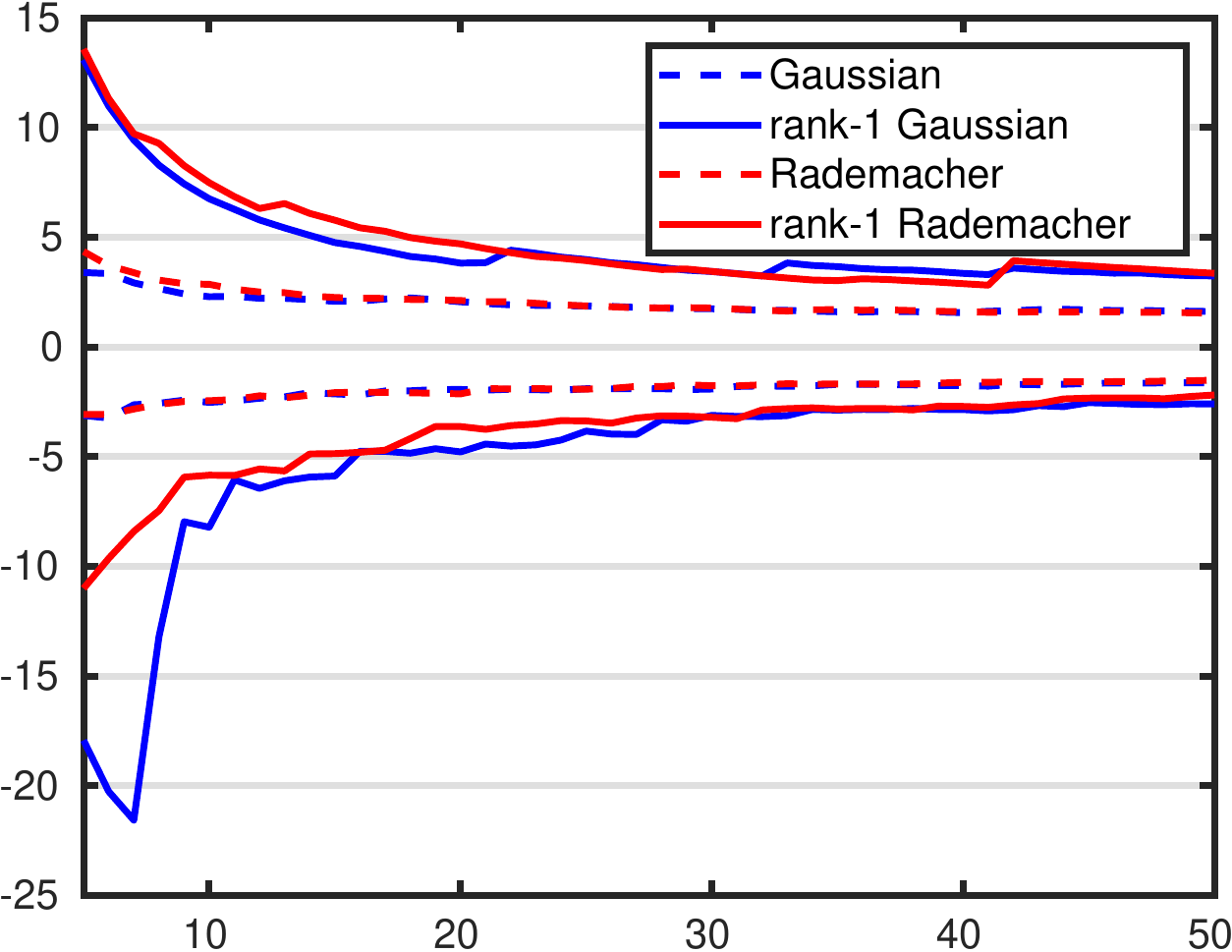}}  \\ %
\caption{Performance of trace estimators for 8 different matrices. See Section~\ref{sec:testmatrices} for details. \label{fig:testmatrices}}
\end{figure}

When the trace of $A$ itself is estimated, we used its exact trace as reference value $\mathsf{Exact}$.
When the trace or Frobenius norm of $A^{-1}$ are estimated, we used $\mathsf{Est}_{1\,000}$ with standard Gaussian random vectors as reference value $\mathsf{Exact}$. For each matrix and each type of random vector, we repeated $10\,000$ times the computation of $\mathsf{Est}_{k}$ with $k$ ranging from $1$ to up to $50$ and computed the minimum of $-\mathsf{Exact} / \mathsf{Est}_{k}$ and the maximum of $\mathsf{Est}_{k} / \mathsf{Exact}$ across all $10\,000$ runs for each $k$. The lower and upper curves in each plot of Figure~\ref{fig:testmatrices} display the minima/maxima vs. $k$ for four different types of random vectors: standard Gaussian, rank-one Gaussian, Rademacher, and rank-one Rademacher vectors. For example, for the matrix $\mathsf{Ones}$ and $k = 20$, it can be seen that all curves stay between $-20$ and $10$, which implies that for all $10\,000$ samples of $\mathsf{Est}_{k}$, the bounds
$
\mathsf{Est}_{k}/10 \le \mathsf{Exact} \le 20 \cdot \mathsf{Est}_{k}
$
were satisfied. For nearly all configurations, using rank-one random vectors instead of unstructured random vectors only has a modest impact on these worst-case under- and overestimation factors. Additional data is given in Appendix~\ref{sec:detaileddata}, which shows that the inequalities
\[
  \mathsf{Est}_{10}/30 \le \mathsf{Exact} \le 30 \cdot \mathsf{Est}_{10}
\]
are nearly always satisfied across all matrices and all types of random vectors.


\newcommand{\fre}{Fr\'{e}chet }
\newcommand{\df}[1]{Df\{#1\}}

\subsection{\fre derivative norm estimation}
\label{sec:frechet}

In this section we describe an application in which random vectors of the form $\tilde{x} \otimes \hat{x}$ are exploited to significantly speed up computation. Given a matrix function $f(A)$ \change{for} a matrix $A \in \mathbb{R}^{n \times n}$, our goal is to estimate the operator norm of the \fre derivative, which is a linear map $\df{A} : \mathbb{R}^{n \times n} \to \mathbb{R}^{n \times n}$ uniquely defined by the property $f(A+X) = f(A)+ \df{A}(X)+\mathcal{O}(\|X\|_2^2)$.
\change{This quantity measures the first-order sensitivity of the matrix function $f(A)$ under perturbation of $A$~\cite{MR2486857,MR3080997,HighamFOM}.}

It is well-known \cite[Section 3.2]{HighamFOM} that under certain conditions on the smoothness of $f$,
\begin{equation}
	\label{eq:frechet_block12}
	f \left( \mb{cc} A & X \\ 0 & A \me \right) =
		\mb{cc} f(A) & \df{A}(X) \\ 0 & f(A) \me.
\end{equation}
By vectorizing the matrices in its domain and range, the action of $\df{A}$ can be represented by an $n^2 \times n^2$ matrix $K_A$ such that $\text{vec}(\df{A}(X)) = K_A x$, where $x = \text{vec}(X)$. This yields
$$
	\|\df{A}\| \change{:= \sup_{\|X\|_F=1} \|\df{A}(X)\|_F = \sup_{\|x\|_2=1} \|K_A x\|_2 = \|K_A\|_2} = \sqrt{ \lam_{\max} (K_A^T K_A)}.
$$
Therefore, to compute $\|\df{A}\|$, one may apply the power method to the matrix $K_A^T K_A$ to compute its largest eigenvalue. The power method requires evaluating $K_A x$ and $K_A^T x$ for several vectors $x$, i.e., evaluating $\df{A}{X}$ and $\df{A^T}{X}$ for several matrices $X$, where $\text{vec}(X) = x$. This is executed via \eqref{eq:frechet_block12}: the matrix function $f$ is evaluated at a $2n \times 2n$ matrix, and the top right block is read from the resulting matrix. For larger $n$, such computation may be very demanding.

If we are only interested in an upper bound for $\|\df{A}\|$ instead of its exact value, we can apply techniques discussed in Section \ref{sec:small-sample}: let $x_j = \tilde{x}_j \otimes \hat{x}_j$, where $\tilde{x}_j$ and $\hat{x}_j$ are standard Gaussian vectors of length $n$, for $j=1, \ldots, k$. Using the maximum estimator
\begin{equation}
	\label{eq:frechet-maxest}
	\mathsf{Max}_k
		= \max_{j=1, \ldots, k} \|K_A (\tilde{x}_j \otimes \hat{x}_j)\|_2
		= \max_{j=1, \ldots, k} \|\df{A} (\hat{x}_j \tilde{x}_j^T)\|_F
\end{equation}
and applying \eqref{eq:norm2-upper-max-estimator} then guaranties the following:
$$
	\PP{ \|\df{A}\| \leq \theta \cdot \mathsf{Max}_k }
		= \PP{ \|K_A\|_2 \leq \theta \cdot \mathsf{Max}_k }
		\geq 1 - \Big( \frac{2}{\pi} \left( 2 + \ln(1 + 2\theta) \right) \theta^{-1} \Big)^k.
$$
Computing the matrix-vector products in \eqref{eq:frechet-maxest} reduces to evaluating $\df{A} (X_j)$ for rank-one matrices $X_j=\hat{x}_j \tilde{x}_j^T$. This can be done far more efficiently than evaluating $\df{A}(X)$ for general matrices $X$, by using Algorithm \ref{alg:frechet-rank-one}, slightly adapted from \cite{Kressner18Bivariate}.
Algorithm \ref{alg:frechet-rank-one} also needs to evaluate the function $f$, but for matrices of sizes at most $2\ell \times 2\ell$, where the final dimension $\ell$ of the Krylov subspace is significantly smaller than $n$. This is the source of the significant speedup when compared to the power method. Also note that only the first iteration of the power method can benefit from Algorithm \ref{alg:frechet-rank-one}, as the later iterates are generally not matrices of rank one.

\begin{algorithm2e}[h]
	\caption{Arnoldi method for approximating $\df{A}(cd^T)$}
	\label{alg:frechet-rank-one}

	\For{$\ell=1, 2, \ldots$}{%
		Perform one step of the Arnoldi method to obtain an orthonormal basis $U_{\ell}$ of the Krylov subspace $\mathcal{K}_{\ell}(A, c)$ and $G_{\ell}=U_{\ell}^T A U_{\ell}$, $\tilde{c}=U_{\ell}^T c$.

		Perform one step of the Arnoldi method to obtain an orthonormal basis $V_{\ell}$ of the Krylov subspace $\mathcal{K}_{\ell}(A^T, d)$ and $H_{\ell}=V_{\ell}^T A^T V_{\ell}$, $\tilde{d}=V_{\ell}^T d$.

		Compute $F_{\ell} = f\left( \mb{cc} G_{\ell} & \tilde{c}\tilde{d}^T \\ 0 & H_{\ell}^T \me \right)$ and set $X_{\ell} = F_{\ell}(1\!:\!{\ell}, {\ell}+1\!:\!2{\ell})$.

		\If{converged}{Stop the loop.}
	}

	Return $U_{\ell} X_{\ell} V_{\ell}^T$.
\end{algorithm2e}

To illustrate this difference, we ran both the power method and the $99.9\%$ confidence maximum estimator ($\theta=10$, $k=7$) to estimate $\|\df{A}\|$, where $f$ is the matrix exponential, and $A = -0.01( I_n \otimes T_n + T_n \otimes I_n)$. Here $T_n$ is the tridiagonal matrix with $2/(n-1)^2$ on the main diagonal and $-1/(n-1)^2$ on the first upper and lower suddiagonal, i.e., the 1D discrete Laplacian on $[0, 1]$. Running in Matlab R2019b on an Intel i5 4690K processor, we obtain the following results:

\begin{center}
	\begin{tabular}{|c||c|c||c|c|c|}
		\hline
		$n$ & time(power method) & $\|\df{A}\|$ & time($\mathsf{Max}_k$) & $\theta \cdot \mathsf{Max}_k$ & maximum $\ell$\\ \hline
		 10 &    0.12            &    0.86   &     0.05    &    147.10   &  20 \\
		 20 &    1.81            &    0.85   &     0.10    &    151.00   &  30 \\
		 30 &   16.49            &    0.78   &     0.38    &    115.44   &  45 \\
		 40 &   74.06            &    0.80   &     1.18    &    102.13   &  55 \\
		 50 &  275.52            &    0.82   &     2.95    &     93.38   &  70 \\ \hline
	\end{tabular}
\end{center}

Note that $A$ is an $n^2 \times n^2$ matrix.
We ran $7$ iterations of the power method; the third column shows the approximation of $\|\df{A}\|$ it reported. The fifth column shows the $99.9\%$ confidence upper bound for $\|\df{A}\|$ as reported by the maximum estimator. In the last column is the maximum dimension $\ell$ of all Krylov subspaces needed for the computation of $\mathsf{Max}_k$. We stop the Arnoldi iteration once $\|F_\ell - \smb F_{\ell-1} & 0 \\ 0 & 0 \sme\| < 10^{-8}$, as suggested in \cite[Section 2.3]{BecKreSch17}. All times are given in seconds. While the upper bounds provided by the maximum estimators are, in this case, about $100$ times larger than the actual norm of the \fre derivative, this may be sufficient as a rough estimate. Such an estimate can be rapidly computed by using random vectors studied in this paper.

\section{Conclusions}

In this work we have provided theoretical and experimental evidence that rank-one random vectors are suited for norm  and trace estimation. While their performance is consistently worse compared to unstructured vectors, this can be easily mitigated by, e.g., increasing the constants or increasing the number of samples in the stochastic trace estimator.

It is tempting to ask whether the results of this paper have a meaningful extension to higher-order tensors, \change{that is, norm and trace estimation with Kronecker products of $d$ vectors.} 
\change{Without further assumptions on $A$, the techniques used in this work will lead to estimates of the success probability that vanish exponentially fast as $d$ increases. This is explicit in the results from~\cite{Ver19} but also the Khinchine inequalities~\cite{Latala2006} and, more generally, moments of order-$d$ chaos exhibit exponential growth.}

\begin{paragraph}{Acknowledgments.}
 This manuscript concludes a long journey and we gratefully acknowledge the numerous discussions with colleagues on the aims and techniques of this work, including Robert Dalang, Hrvoje Planini\'{c}, Holger Rauhut, and Andr\'e Uschmajew. \change{We also thank the referees for their careful reading and constructive remarks.}
 
 This work was supported by the  SNSF research project \emph{Low-rank
updates of matrix functions and fast eigenvalue solvers}, the Croatian Science Foundation under the grant HRZZ-6268 - \emph{Randomized low rank algorithms and applications to parameter dependent problems},
 and the COST action CA18232 - \emph{Mathematical models for interacting dynamics on networks}.
\end{paragraph}

\bibliographystyle{plain}
\bibliography{refs}

\begin{thebibliography}{10}

\bibitem{MR2486857}
Awad~H. Al-Mohy and Nicholas~J. Higham.
\newblock Computing the {F}r\'{e}chet derivative of the matrix exponential,
  with an application to condition number estimation.
\newblock {\em SIAM J. Matrix Anal. Appl.}, 30(4):1639--1657, 2008/09.

\bibitem{MR3080997}
Awad~H. Al-Mohy, Nicholas~J. Higham, and Samuel~D. Relton.
\newblock Computing the {F}r\'{e}chet derivative of the matrix logarithm and
  estimating the condition number.
\newblock {\em SIAM J. Sci. Comput.}, 35(4):C394--C410, 2013.

\bibitem{Avron2011}
Haim Avron and Sivan Toledo.
\newblock Randomized algorithms for estimating the trace of an implicit
  symmetric positive semi-definite matrix.
\newblock {\em J. ACM}, 58(2):Art. 8, 17, 2011.

\bibitem{BecKreSch17}
Bernhard Beckermann, Daniel Kressner, and Marcel Schweitzer.
\newblock Low-rank updates of matrix functions.
\newblock {\em SIAM J. Matrix Anal. Appl.}, 39(1):539--565, 2018.

\bibitem{Birchfield2017}
Adam~Barlow Birchfield, Ti~Xu, Kathleen~M. Gegner, Komal~S. Shetye, and
  Thomas~J. Overbye.
\newblock Grid structural characteristics as validation criteria for synthetic
  networks.
\newblock {\em IEEE Transactions on Power Systems}, 32(4):3258--3265, 2017.

\bibitem{Boucheron2013}
St\'{e}phane Boucheron, G\'{a}bor Lugosi, and Pascal Massart.
\newblock {\em Concentration inequalities}.
\newblock Oxford University Press, Oxford, 2013.

\bibitem{Cai2015}
T.~Tony Cai and Anru Zhang.
\newblock R{OP}: matrix recovery via rank-one projections.
\newblock {\em Ann. Statist.}, 43(1):102--138, 2015.

\bibitem{Chen2015}
Yuxin Chen, Yuejie Chi, and Andrea~J. Goldsmith.
\newblock Exact and stable covariance estimation from quadratic sampling via
  convex programming.
\newblock {\em IEEE Trans. Inform. Theory}, 61(7):4034--4059, 2015.

\bibitem{Cortinovis2020}
Alice Cortinovis and Daniel Kressner.
\newblock On randomized trace estimates for indefinite matrices with an
  application to determinants.
\newblock {arXiv:2005.10009}, 2020.

\bibitem{Davis2011}
Timothy~A. Davis and Yifan Hu.
\newblock The {U}niversity of {F}lorida sparse matrix collection.
\newblock {\em ACM Trans. Math. Software}, 38(1):Art. 1, 25, 2011.
\newblock See also \url{https://sparse.tamu.edu/}.

\bibitem{Dixon83}
John~D. Dixon.
\newblock Estimating extremal eigenvalues and condition numbers of matrices.
\newblock {\em SIAM J. Numer. Anal.}, 20(4):812--814, 1983.

\bibitem{Golub2013}
Gene~H. Golub and Charles~F. Van~Loan.
\newblock {\em Matrix computations}.
\newblock Johns Hopkins Studies in the Mathematical Sciences. Johns Hopkins
  University Press, Baltimore, MD, fourth edition, 2013.

\bibitem{Grasedyck2013}
Lars Grasedyck, Daniel Kressner, and Christine Tobler.
\newblock A literature survey of low-rank tensor approximation techniques.
\newblock {\em GAMM-Mitt.}, 36(1):53--78, 2013.

\bibitem{Gratton2018}
Serge Gratton and David Titley-Peloquin.
\newblock Improved bounds for small-sample estimation.
\newblock {\em SIAM J. Matrix Anal. Appl.}, 39(2):922--931, 2018.

\bibitem{Gudmundsson1995}
T.~Gudmundsson, C.~S. Kenney, and A.~J. Laub.
\newblock Small-sample statistical estimates for matrix norms.
\newblock {\em SIAM J. Matrix Anal. Appl.}, 16(3):776--792, 1995.

\bibitem{Tropp11}
N.~Halko, P.~G. Martinsson, and J.~A. Tropp.
\newblock Finding structure with randomness: probabilistic algorithms for
  constructing approximate matrix decompositions.
\newblock {\em SIAM Rev.}, 53(2):217--288, 2011.

\bibitem{Higham2002}
Nicholas~J. Higham.
\newblock {\em Accuracy and Stability of Numerical Algorithms}.
\newblock SIAM, Philadelphia, PA, second edition, 2002.

\bibitem{HighamFOM}
Nicholas~J. Higham.
\newblock {\em Functions of Matrices}.
\newblock SIAM, Philadelphia, PA, 2008.

\bibitem{Hochstenbach2013}
Michiel~E. Hochstenbach.
\newblock Probabilistic upper bounds for the matrix two-norm.
\newblock {\em J. Sci. Comput.}, 57(3):464--476, 2013.

\bibitem{Horn2013}
R.~A. Horn and C.~R. Johnson.
\newblock {\em Matrix analysis}.
\newblock Cambridge University Press, Cambridge, second edition, 2013.

\bibitem{Hutchinson1990}
M.~F. Hutchinson.
\newblock A stochastic estimator of the trace of the influence matrix for
  {L}aplacian smoothing splines.
\newblock {\em Comm. Statist. Simulation Comput.}, 19(2):433--450, 1990.

\bibitem{Jin2019}
Ruhui Jin, Tamara~G. Kolda, and Rachel Ward.
\newblock Faster {J}ohnson-{L}indenstrauss transforms via {K}ronecker products.
\newblock {arXiv:1909.04801}, 2019.

\bibitem{Kliesch2016}
Martin Kliesch, Richard Kueng, Jens Eisert, and David Gross.
\newblock Improving compressed sensing with the diamond norm.
\newblock {\em IEEE Trans. Inform. Theory}, 62(12):7445--7463, 2016.

\bibitem{Kressner18Bivariate}
Daniel Kressner.
\newblock A {K}rylov subspace method for the approximation of bivariate matrix
  functions.
\newblock In {\em Structured matrices in numerical linear algebra}, volume~30
  of {\em Springer INdAM Ser.}, pages 197--214. Springer, Cham, 2019.

\bibitem{KressnerPerisa}
Daniel Kressner and Lana Peri\v{s}a.
\newblock Recompression of {H}adamard products of tensors in {T}ucker format.
\newblock {\em SIAM J. Sci. Comput.}, 39(5):A1879--A1902, 2017.

\bibitem{Kressner2010}
Daniel Kressner and Christine Tobler.
\newblock Krylov subspace methods for linear systems with tensor product
  structure.
\newblock {\em SIAM J. Matrix Anal. Appl.}, 31(4):1688--1714, 2010.

\bibitem{Kuczynski1992}
J.~Kuczy\'{n}ski and H.~Wo\'{z}niakowski.
\newblock Estimating the largest eigenvalue by the power and {L}anczos
  algorithms with a random start.
\newblock {\em SIAM J. Matrix Anal. Appl.}, 13(4):1094--1122, 1992.

\bibitem{Kueng2017}
Richard Kueng, Holger Rauhut, and Ulrich Terstiege.
\newblock Low rank matrix recovery from rank one measurements.
\newblock {\em Appl. Comput. Harmon. Anal.}, 42(1):88--116, 2017.

\bibitem{Langville2004}
Amy~N. Langville and William~J. Stewart.
\newblock The {K}ronecker product and stochastic automata networks.
\newblock {\em J. Comput. Appl. Math.}, 167(2):429--447, 2004.

\bibitem{Latala2006}
Rafa\l{} Lata\l{}a.
\newblock Estimates of moments and tails of {G}aussian chaoses.
\newblock {\em Ann. Probab.}, 34(6):2315--2331, 2006.

\bibitem{Laurent2000}
B.~Laurent and P.~Massart.
\newblock Adaptive estimation of a quadratic functional by model selection.
\newblock {\em Ann. Statist.}, 28(5):1302--1338, 2000.

\bibitem{Rauhut2010}
Holger Rauhut.
\newblock Compressive sensing and structured random matrices.
\newblock In {\em Theoretical foundations and numerical methods for sparse
  recovery}, volume~9 of {\em Radon Ser. Comput. Appl. Math.}, pages 1--92.
  Walter de Gruyter, Berlin, 2010.

\bibitem{Roosta2015}
Farbod Roosta-Khorasani and Uri Ascher.
\newblock Improved bounds on sample size for implicit matrix trace estimators.
\newblock {\em Found. Comput. Math.}, 15(5):1187--1212, 2015.

\bibitem{Shephard1991}
N.~G. Shephard.
\newblock From characteristic function to distribution function: a simple
  framework for the theory.
\newblock {\em Econometric Theory}, 7(4):519--529, 1991.

\bibitem{Simoncini2016}
Valeria Simoncini.
\newblock Computational methods for linear matrix equations.
\newblock {\em SIAM Rev.}, 58(3):377--441, 2016.

\bibitem{Sun2018}
Yiming Sun, Yang Guo, Joel~A. Tropp, and Madeleine Udell.
\newblock Tensor random projection for low memory dimension reduction.
\newblock In {\em NeurIPS Workshop on Relational Representation Learning},
  2018.

\bibitem{Tropp2015}
Joel~A. Tropp.
\newblock Convex recovery of a structured signal from independent random linear
  measurements.
\newblock In {\em Sampling theory, a renaissance}, Appl. Numer. Harmon. Anal.,
  pages 67--101. Birkh\"{a}user/Springer, Cham, 2015.

\bibitem{Ver19}
Roman Vershynin.
\newblock Concentration inequalities for random tensors.
\newblock {arXiv:1905.00802}, 2019.

\end{thebibliography}

\appendix

\section{Appendix}

The following lemma provides a Chernoff bound for chi-square distributions. This result is certainly well known; we include it for completeness.
\begin{lemma}
	\label{lem:chi_bound}
	Let $X$ be a random variable having a chi-square distribution with $k$ degrees of freedom. For $\theta > 1$, it holds that
	$$
		\PP{X > k\theta} \leq (\theta e^{1-\theta})^{k/2}.
	$$
\end{lemma}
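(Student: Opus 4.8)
The plan is to apply a standard Chernoff bound based on the moment generating function of the chi-square distribution. Recall that if $X$ has a chi-square distribution with $k$ degrees of freedom, then $\EE[e^{sX}] = (1-2s)^{-k/2}$ for every $s < 1/2$. For any $s \in (0, 1/2)$, Markov's inequality applied to the nonnegative random variable $e^{sX}$ gives
\[
 \PP{X > k\theta} = \PP{e^{sX} > e^{sk\theta}} \le e^{-sk\theta}\, \EE[e^{sX}] = e^{-sk\theta} (1-2s)^{-k/2}.
\]
This bound is valid for all $s \in (0,1/2)$, so the next step is to minimize the right-hand side over $s$.

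Taking the logarithm, I would minimize $g(s) := -sk\theta - \tfrac{k}{2}\ln(1-2s)$ over $s \in (0,1/2)$. Differentiating, $g'(s) = -k\theta + \tfrac{k}{1-2s}$, which vanishes precisely when $1-2s = 1/\theta$, i.e. at $s^\star = \tfrac{\theta-1}{2\theta}$. Since $\theta > 1$, we indeed have $s^\star \in (0, 1/2)$, so this is an admissible choice; a quick check of the second derivative (or convexity of $g$) confirms it is the minimizer.

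Finally, substituting $s = s^\star$ into the bound: $e^{-s^\star k\theta} = e^{-k(\theta-1)/2}$ and $(1-2s^\star)^{-k/2} = \theta^{k/2}$, so
\[
 \PP{X > k\theta} \le \theta^{k/2} e^{-k(\theta-1)/2} = \bigl(\theta e^{1-\theta}\bigr)^{k/2},
\]
which is the claimed inequality. I do not anticipate any real obstacle here; the only points requiring a line of justification are recalling the chi-square moment generating function and verifying that the optimal $s$ stays within its region of validity, both of which are immediate consequences of the assumption $\theta > 1$.
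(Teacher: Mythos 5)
Your proof is correct and follows essentially the same route as the paper: a Chernoff bound using the chi-square moment generating function $(1-2s)^{-k/2}$, optimized at $s=\tfrac{\theta-1}{2\theta}$ (the paper writes this as $t=k/2-k/(2\theta)$ with $t=ks$, which is the identical choice). The only difference is cosmetic parametrization, plus your explicit verification that this $s$ is the minimizer, which the paper omits.
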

\begin{proof}
	For $t>0$, we let $M_X(t) = (1-2t)^{-k/2}$ denote the moment generating function of $X$. By the Markov inequality,
	\begin{align*}
		\PP{X > k\theta}
			&= \PP{tX/k > t\theta}
			= \PP{e^{tX/k} > e^{t\theta}} \\
			&\leq \EE[ e^{tX/k} ] \cdot e^{-t \theta}
			= M_{X}(t/k) e^{-t \theta}
			= (1 - 2t/k)^{-k/2} e^{-t \theta},
	\end{align*}
	which holds for $t/k < 1/2$. Choosing $t = k/2 - k/{2\theta}$ implies
	$$
		\PP{X > k\theta}
			\leq \theta^{k/2} e^{k/2(1 - \theta)}
			= (\theta e^{1-\theta})^{k/2}.
	$$
\end{proof}

The following two results on the moments and the moment generating function of decoupled second-order Gaussian chaos are closely related to  existing results by Lata\l{}a~\cite{Latala2006}; see also the monograph~\cite{Boucheron2013}. We include these results for the convenience of the reader.
\begin{lemma} \label{lemma:gaussianchaos}
	Let $Q \in \R^{\hat n \times \tilde n}$, and let $\hat{x} \sim \normal(0, I_{\hat n})$, $\tilde{x} \sim \normal(0, I_{\tilde n})$ . For $Z = \hat x^T Q \tilde x$ it holds that
	\[
	 \EE[Z^2] = \|Q\|_F, \qquad \EE[Z^4] = 3(2 \|Q\|_{(4)}^4 + \|Q\|_F^4) \le 9\|Q\|_F^4,
	\]
	where $\|\cdot \|_{(4)}$ denotes the Schatten-$4$ norm~\cite[Sec. 7.4]{Horn2013} of a matrix.
	For any even $k$, we have
	\begin{equation} \label{eq:momentboundsgauss}
	 \EE[Z^k] \le \big( (k-1)!! \big)^2 \|Q\|_F^k,
    \end{equation}
	where $(k-1)!! = (k-1)(k-3) \cdots 3 \cdot 1$ denotes the double factorial. For odd $k$, $\EE[Z^k] = 0$.

\end{lemma}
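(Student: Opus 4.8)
The plan is to compute the relevant moments of $Z = \hat x^T Q \tilde x$ by conditioning on $\tilde x$ and exploiting the fact that, conditionally, $Z$ is a one-dimensional Gaussian. First I would write, using the SVD $Q = \hat V \Sigma \tilde V^T$ with $\Sigma = \diag(\sigma_1,\ldots,\sigma_r)$, that $Z = \sum_{i=1}^r \sigma_i \hat x_i \tilde x_i$ where $\hat x_i = \hat v_i^T \hat x$ and $\tilde x_i = \tilde v_i^T \tilde x$ are independent $\normal(0,1)$ variables (orthogonal invariance of the Gaussian, exactly as in the proof of Theorem~\ref{tm:norm2-left-tail}). Conditioned on $\tilde x$, the variable $Z$ is $\normal(0,\sigma^2)$ with $\sigma^2 = \sum_i \sigma_i^2 \tilde x_i^2$, so $\EE[Z^k \mid \tilde x] = 0$ for odd $k$ and $\EE[Z^k \mid \tilde x] = (k-1)!!\,\sigma^k$ for even $k$. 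Taking the expectation over $\tilde x$ reduces everything to computing $\EE\big[ \big(\sum_i \sigma_i^2 \tilde x_i^2\big)^{k/2}\big]$.

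For $k = 2$: $\EE[Z^2] = \EE[\sigma^2] = \sum_i \sigma_i^2 = \|Q\|_F^2$ (note the statement's right-hand side $\|Q\|_F$ is a typo for $\|Q\|_F^2$, as is clear from Lemma~\ref{lemma:expectedvalue}). For $k = 4$: $\EE[Z^4] = 3\,\EE[\sigma^4] = 3\,\EE\big[\big(\sum_i \sigma_i^2 \tilde x_i^2\big)^2\big] = 3\big(\sum_{i\ne j}\sigma_i^2\sigma_j^2 \EE[\tilde x_i^2]\EE[\tilde x_j^2] + \sum_i \sigma_i^4 \EE[\tilde x_i^4]\big) = 3\big(\sum_{i\ne j}\sigma_i^2\sigma_j^2 + 3\sum_i \sigma_i^4\big) = 3\big((\sum_i\sigma_i^2)^2 + 2\sum_i\sigma_i^4\big) = 3\big(\|Q\|_F^4 + 2\|Q\|_{(4)}^4\big)$, using $\sum_i\sigma_i^4 = \|Q\|_{(4)}^4$. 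The bound $\EE[Z^4] \le 9\|Q\|_F^4$ then follows from $\|Q\|_{(4)}^4 = \sum_i\sigma_i^4 \le (\sum_i\sigma_i^2)^2 = \|Q\|_F^4$.

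For the general even-$k$ bound~\eqref{eq:momentboundsgauss}, I would again use $\EE[Z^k] = (k-1)!!\,\EE[\sigma^k]$ and bound $\EE[\sigma^k] = \EE\big[\big(\sum_i\sigma_i^2\tilde x_i^2\big)^{k/2}\big] \le (k-1)!!\,\big(\sum_i\sigma_i^2\big)^{k/2} = (k-1)!!\,\|Q\|_F^k$. The cleanest way to get the inequality $\EE\big[\big(\sum_i\sigma_i^2\tilde x_i^2\big)^{k/2}\big] \le (k-1)!!\,(\sum_i\sigma_i^2)^{k/2}$ is to write $\sum_i\sigma_i^2\tilde x_i^2 = \|\Sigma^{1/2}\tilde x\|_2^2 \overset{d}{=} \|\Sigma^{1/2}\|_2^2 \cdot (\text{something})$ — actually more directly, observe that $\sum_i\sigma_i^2\tilde x_i^2$ is stochastically dominated, in the sense of moments, by $(\sum_i\sigma_i^2)\cdot g^2$ for a single standard Gaussian $g$: since $\xi \mapsto \xi^{k/2}$ is convex for $k\ge 2$ and $\sum_i \frac{\sigma_i^2}{\sum_j\sigma_j^2}\tilde x_i^2$ is a convex combination, Jensen gives $\big(\sum_i\frac{\sigma_i^2}{\sum_j\sigma_j^2}\tilde x_i^2\big)^{k/2} \le \sum_i\frac{\sigma_i^2}{\sum_j\sigma_j^2}\tilde x_i^{k}$, whence $\EE\big[\big(\sum_i\sigma_i^2\tilde x_i^2\big)^{k/2}\big] \le (\sum_i\sigma_i^2)^{k/2}\,\EE[g^k] = (\sum_i\sigma_i^2)^{k/2}(k-1)!!$. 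Combining, $\EE[Z^k]\le \big((k-1)!!\big)^2\|Q\|_F^k$. The odd case is immediate from the conditional argument. The one step needing a little care is the Jensen/convexity manipulation for general $k$, together with correctly recalling the Gaussian moment formula $\EE[g^k] = (k-1)!!$; neither is a genuine obstacle, so the main "work" is really just the bookkeeping in the $k=4$ computation.
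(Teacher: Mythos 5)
Your proof is correct and follows essentially the same strategy as the paper's: condition on one of the two Gaussian vectors so that $Z$ becomes a centered normal with random variance $\sum_i \sigma_i^2 \tilde x_i^2$, which reduces everything to $\EE[Z^k] = (k-1)!!\,\EE\big[\big(\sum_i\sigma_i^2\tilde x_i^2\big)^{k/2}\big]$ for even $k$; the $k=2$ and $k=4$ computations then match the paper's line by line (and you are right that $\EE[Z^2]=\|Q\|_F$ in the statement is a typo for $\|Q\|_F^2$). The one place you genuinely diverge is the general even-$k$ bound: the paper expands $\big(\sum_i\sigma_i^2\hat x_i^2\big)^{k/2}$ as a multinomial sum and bounds each mixed moment via the double-factorial inequality $(2p-1)!!\,(2q-1)!! \le (2(p+q)-1)!!$, whereas you apply Jensen's inequality to the convex function $\xi\mapsto\xi^{k/2}$ with weights $\mu_i = \sigma_i^2/\|Q\|_F^2$ to get $\EE\big[\big(\sum_i\mu_i\tilde x_i^2\big)^{k/2}\big] \le \sum_i\mu_i\,\EE[\tilde x_i^k] = (k-1)!!$ directly. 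Both yield the same constant $\big((k-1)!!\big)^2\|Q\|_F^k$; your Jensen route avoids the combinatorial bookkeeping and is arguably the cleaner of the two, at the cost of obscuring that the paper's multinomial identity gives exact expressions (as used for $k=4$) before any inequality is invoked. Your treatment of odd $k$ via the conditional Gaussian is also fine and marginally more direct than the paper's global symmetry argument $Z \overset{d}{=} -Z$.
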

\begin{proof}
For the second moment, we obtain
\[
 \EE[Z^2] = \EE_{\hat x}\big[ \EE_{\tilde x} \big[ (\hat x^T Q \tilde x)^2 \vert \hat x \big] \big] =
 \EE_{\hat x}\big[ \|Q^T \hat x\|^2_2 \big]  = \|Q\|_F^2,
\]
where the second equality follows from the fact that for $y = Q^T \hat x$ with fixed $\hat x$, the random variable $y^T \tilde x$ is normal with zero mean and variance $\|y\|_2^2$. Noting that the fourth moment of such a normal random variable is $3\|y\|_2^4$,
an analogous argument shows $\EE[Z^4] = 3 \cdot \EE\big[ \|Q^T \hat x\|^4_2 \big]$. To proceed from here, we may
assume -- without loss of generality -- that $\hat n \le \tilde n$ and that $Q$ is a diagonal matrix with the singular values $\sigma_1\ge \cdots \ge \sigma_{\hat n}\ge 0$ on the diagonal; see, e.g., the proof of Theorem~\ref{tm:norm2-left-tail}. This gives
\begin{align*}
 \EE\big[ \|Q^T \hat x\|^4_2 \big] &= \EE\big[ \big( \sigma_1^2 \hat x_1^2 + \cdots + \sigma_{\hat n}^2 \hat x_{\hat n}^2\big)^2 \big] = \sum_{ij} \sigma_i^2 \sigma_j^2  \EE[\hat x_i^2\hat x_j^2] \\
 &= \sum_i \sigma_i^4 \EE[\hat x_i^4] +  \sum_{i \not= j} \sigma_i^2 \sigma_j^2  \EE[\hat x_i^2] \cdot \EE[\hat x_j^2]
 = 3 \sum_i \sigma_i^4 + \sum_{i \not= j} \sigma_i^2 \sigma_j^2 \\
 &= 2 \sum_i \sigma_i^4 + \sum_{ij} \sigma_i^2 \sigma_j^2 = 2 \|Q\|_{(4)}^4 + \|Q\|_F^4,
\end{align*}
which establishes the claimed expression for $\EE[Z^4]$. The upper bound $9\|Q\|_F^4$ follows from $\|Q\|_{(4)} \le \|Q\|_F$.

For general even $k$, the statement and proof of~\eqref{eq:momentboundsgauss} is contained in the proof of Lemma 7.1 in~\cite{Cai2015}. The proof that follows is slightly simpler. We first note that the $k$th moment of a centered normal random variable with variance $\sigma^2$ is given by
$(k-1)!! \sigma^k$. In turn, $\EE[Z^k] = (k-1)!!\cdot \EE\big[ \|Q^T \hat x\|^k_2 \big]$. We proceed as above and obtain
\[
 \EE\big[ \|Q^T \hat x\|^k_2 \big]  = \sum_{i_1,\ldots,i_{k/2}} \sigma^2_{i_1} \cdots \sigma^2_{i_{k/2}} \EE[ \hat x_{i_1}^2 \cdots \hat x_{k/2}^2].
\]
Using that $\EE[ \hat x_{i}^{2p} \hat x_{j}^{2q}] = (2p-1)!! \cdot (2q-1)!! \le (2(p+q)-1)!! = \EE[ \hat x_{i}^{2(p+q)}]$ for any $p,q\in \mathbb N$ and $i\not=j$, we obtain
\[
\EE\big[ \|Q^T \hat x\|^k_2 \big] \le (k-1)!! \sum_{i_1,\ldots,i_{k/2}} \sigma^2_{i_1} \cdots \sigma^2_{i_{k/2}} = (k-1)!! \cdot \|Q\|_F^k,
\]
which concludes the proof of~\eqref{eq:momentboundsgauss}.

The statement on odd $k$ follows from the symmetry of the distribution: $Z$ and $-Z = \hat x^T Q (-\tilde x)$ have the same distribution and hence $\EE\big[Z^k] =
\EE\big[(-Z)^k] = \EE\big[{-Z^k}] = -\EE\big[Z^k]$. This shows $\EE\big[Z^k] = 0$.
\end{proof}

\begin{corollary} \label{corollary:gausmgf}
For $Z$ as in Lemma~\ref{lemma:gaussianchaos} with $\|Q\|_F  = 1$, the moment generating function exists and is bounded by
\[
 \EE\big[ \exp(tZ) \big] \le \frac{1}{\sqrt{1-t^2}}
\]
for $|t| < 1$.
\end{corollary}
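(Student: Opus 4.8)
The plan is to obtain the bound by expanding $\exp(tZ)$ into its Taylor series and controlling the resulting moment series by means of the moment estimates already established in Lemma~\ref{lemma:gaussianchaos}. Since $\|Q\|_F = 1$, that lemma gives $\EE[Z^{2m}] \le \big((2m-1)!!\big)^2$ for each $m\ge 0$, and $\EE[Z^k] = 0$ for odd $k$.

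First I would justify that $\EE[\exp(tZ)]$ is finite for $|t|<1$ and that $\sum_{k\ge 0}\frac{t^k}{k!}\EE[Z^k]$ converges absolutely, so that term-by-term integration is legitimate. For the even terms this is immediate from the moment bound. For the odd terms, a Cauchy--Schwarz step gives $\EE[|Z|^{2m+1}] \le \big(\EE[Z^{2m}]\big)^{1/2}\big(\EE[Z^{2m+2}]\big)^{1/2} \le (2m-1)!!\,(2m+1)!!$; combined with the elementary identities $(2m)! = 2^m m!\,(2m-1)!!$ and $(2m+1)! = (2m)!!\,(2m+1)!!$ one sees that the odd part of $\sum_k \frac{|t|^k}{k!}\EE[|Z|^k]$ is dominated by $\sum_{m\ge 0}|t|^{2m+1}$ (using $\frac{(2m-1)!!}{(2m)!!}\le 1$) and hence converges for $|t|<1$; the even part is handled in the same spirit.

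Once the interchange is justified, the odd moments drop out by symmetry and we are left with
\[
 \EE[\exp(tZ)] \;=\; \sum_{m=0}^{\infty} \frac{t^{2m}}{(2m)!}\,\EE[Z^{2m}] \;\le\; \sum_{m=0}^{\infty} \frac{\big((2m-1)!!\big)^2}{(2m)!}\, t^{2m}.
\]
The last step is to recognize the right-hand side as the binomial series of $(1-t^2)^{-1/2}$: using $(2m)! = 2^m m!\,(2m-1)!!$ one has $\frac{((2m-1)!!)^2}{(2m)!} = \frac{(2m-1)!!}{2^m m!} = \tfrac{1}{4^m}\binom{2m}{m}$, and $\sum_{m\ge 0}\binom{2m}{m}\big(t^2/4\big)^m = (1-t^2)^{-1/2}$ by the generalized binomial theorem, yielding the claim.

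The only real technical subtlety — the ``hard part,'' such as it is — is the rigorous justification of the term-by-term integration, i.e.\ the absolute convergence of $\sum_k \frac{|t|^k}{k!}\EE[|Z|^k]$; the Cauchy--Schwarz bound on the odd absolute moments is the key device there. The remaining manipulations are a routine computation with double factorials.
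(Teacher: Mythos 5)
Your proof is correct and follows essentially the same route as the paper's: expand the moment generating function, drop the odd moments by symmetry, apply the even-moment bound $\EE[Z^{2m}]\le\big((2m-1)!!\big)^2$ from Lemma~\ref{lemma:gaussianchaos}, and sum the resulting binomial series to get $(1-t^2)^{-1/2}$. The only difference is that you explicitly justify the term-by-term integration via a Cauchy--Schwarz bound on the odd absolute moments, a step the paper silently omits; this is a welcome addition but does not change the argument.
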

\begin{proof}
Using the result of Lemma~\ref{lemma:gaussianchaos}, it follows that
\begin{align*}
 \EE\big[ \exp(tZ) \big] & = \sum_{k = 0}^\infty \frac{1}{k!} \EE\big[ Z^k \big] t^k = \sum_{k = 0}^\infty \frac{1}{(2k)!} \EE\big[ Z^{2k} \big] t^{2k} \\
 &\le \sum_{k = 0}^\infty \frac{\big( (k-1)!! \big)^2}{(2k)!}  t^{2k}
 = \sum_{k = 0}^\infty \frac{1}{4^k} {2k \choose k}  t^{2k} = \frac{1}{\sqrt{1-t^2}},
\end{align*}
where the last step follows from Taylor expansion.
\end{proof}

\pagebreak

\section{Detailed data on performance of estimators} \label{sec:detaileddata}

The following tables provide detailed data on the performance of trace/Frobenius norm estimation with the stochastic trace estimator using rank-one/unstructured Gaussian/Rademacher vectors.
\begin{center}
Legend:
{\sf G}$=$Gaussian,
{\sf G1}$=$rank-one Gaussian,
{\sf R}$=$Rademacher,
{\sf R1}$=$rank-one Rademacher.
\end{center}
For each value of $\theta$ and $k$ the upper value shows the ratio of events that the estimator times $\theta$ does not provide an upper bound and 
the lower value shows the ratio of events that the estimator divided by $\theta$ does not provide a lower bound. For example, when $A$ is the matrix of all ones,
the inequality $\trace(A) \le 8\cdot \mathsf{Est}_5$ fails for only $33$ out of $10\,000$ events
while 
the inequality $\trace(A) \ge 1/8\cdot \mathsf{Est}_5$ fails for $1\,201$ out of $10\,000$ events when using rank-one Gaussian vectors.\\[0.3cm]%
\begin{tabular}{cc} 
\!\!\!\!\!\!\!\!Trace of matrix of all ones & Trace of $vv^T$ with vectorized identity matrix $v$ \\
 \small 
\!\!\!\!\!\!\!\!\begin{tabular}{ccrrrrrr} \hline
& & \multicolumn{1}{c}{\!\!$\theta = 1.2$\!\!} & \multicolumn{1}{c}{$\theta = 2$} & \multicolumn{1}{c}{$\theta = 4$} & \multicolumn{1}{c}{$\theta = 8$} & \multicolumn{1}{c}{$\theta = 30$} \\ \hline
 \multirow{8}{*}{\rotatebox{90}{$k = 1$}} &
 \multirow{2}{*}{\sf G}
    &$0.2758$    &$0.1619$    &$0.0467$    &$0.0053$    &$0.0000$ \\
   &&$0.6376$    &$0.5187$    &$0.3867$    &$0.2796$    &$0.1504$ \\  \cline{2-7}
 & \multirow{2}{*}{\sf G1}
    &$0.1814$    &$0.1244$    &$0.0637$    &$0.0248$    &$0.0016$ \\
   &&$0.7683$    &$0.6959$    &$0.5919$    &$0.4937$    &$0.3332$ \\ \cline{2-7}
 & \multirow{2}{*}{\sf R}
    &$0.2726$    &$0.1587$    &$0.0423$    &$0.0058$    &$0.0000$ \\
   &&$0.6282$    &$0.5136$    &$0.3791$    &$0.2644$    &$0.1361$ \\ \cline{2-7}
 & \multirow{2}{*}{\sf R1}
    &$0.1952$    &$0.1332$    &$0.0521$    &$0.0210$    &$0.0014$ \\
   &&$0.7401$    &$0.6759$    &$0.6100$    &$0.4950$    &$0.3406$ \\ \hline
 \multirow{8}{*}{\rotatebox{90}{$k = 5$}} &
 \multirow{2}{*}{\sf G}
    &$0.3063$    &$0.0728$    &$0.0011$    &$0.0000$    &$0.0000$ \\
   &&$0.4742$    &$0.2248$    &$0.0606$    &$0.0143$    &$0.0005$ \\ \cline{2-7}
 & \multirow{2}{*}{\sf G1}
    &$0.2594$    &$0.1264$    &$0.0303$    &$0.0033$    &$0.0000$ \\
   &&$0.6236$    &$0.4493$    &$0.2495$    &$0.1201$    &$0.0196$ \\ \cline{2-7}
 & \multirow{2}{*}{\sf R}
    &$0.3029$    &$0.0742$    &$0.0013$    &$0.0000$    &$0.0000$ \\
   &&$0.4733$    &$0.2262$    &$0.0605$    &$0.0124$    &$0.0006$ \\ \cline{2-7}
 & \multirow{2}{*}{\sf R1}
    &$0.2667$    &$0.1294$    &$0.0303$    &$0.0039$    &$0.0000$ \\
   &&$0.6114$    &$0.4367$    &$0.2357$    &$0.1061$    &$0.0193$ \\ \hline
 \multirow{8}{*}{\rotatebox{90}{$k = 10$}} &
 \multirow{2}{*}{\sf G}
    &$0.2912$    &$0.0275$    &$0.0000$    &$0.0000$    &$0.0000$ \\
   &&$0.3982$    &$0.1079$    &$0.0078$    &$0.0001$    &$0.0000$ \\ \cline{2-7}
 & \multirow{2}{*}{\sf G1}
    &$0.2807$    &$0.1053$    &$0.0155$    &$0.0009$    &$0.0000$ \\
   &&$0.5481$    &$0.3153$    &$0.1079$    &$0.0271$    &$0.0009$ \\ \cline{2-7}
 & \multirow{2}{*}{\sf R}
    &$0.2873$    &$0.0298$    &$0.0000$    &$0.0000$    &$0.0000$ \\
   &&$0.3930$    &$0.1069$    &$0.0117$    &$0.0004$    &$0.0000$ \\ \cline{2-7}
 & \multirow{2}{*}{\sf R1}
    &$0.2749$    &$0.1024$    &$0.0121$    &$0.0008$    &$0.0000$ \\
   &&$0.5507$    &$0.3142$    &$0.1085$    &$0.0240$    &$0.0005$ \\ \hline
\end{tabular} \!\!&\!\!  \small 
\begin{tabular}{ccrrrrrr} \hline
& & \multicolumn{1}{c}{\!\!$\theta = 1.2$\!\!} & \multicolumn{1}{c}{$\theta = 2$} & \multicolumn{1}{c}{$\theta = 4$} & \multicolumn{1}{c}{$\theta = 8$} & \multicolumn{1}{c}{$\theta = 30$} \\ \hline
 \multirow{8}{*}{\rotatebox{90}{$k = 1$}} &
 \multirow{2}{*}{\sf G}
    &$0.2810$    &$0.1598$    &$0.0455$    &$0.0045$    &$0.0000$ \\
   &&$0.6266$    &$0.5083$    &$0.3736$    &$0.2676$    &$0.1334$ \\  \cline{2-7}
 & \multirow{2}{*}{\sf G1}
    &$0.2729$    &$0.1570$    &$0.0490$    &$0.0058$   &$ 0.0000$ \\
   &&$0.6411$    &$0.5220$    &$0.3869$    &$0.2780$   &$ 0.1456$ \\ \cline{2-7}
 & \multirow{2}{*}{\sf R}
    &$0.3249$    &$0.1244$    &$0.0342$    &$0.0040$   &$ 0.0000$ \\
   &&$0.6751$    &$0.5124$    &$0.3190$    &$0.3190$   &$ 0.1102$ \\ \cline{2-7}
 & \multirow{2}{*}{\sf R1}
    &$0.3256$    &$0.1196$    &$0.0318$    &$0.0020$    &$0.0000$ \\
   &&$0.6744$    &$0.5137$    &$0.3250$    &$0.3250$    &$0.1115$ \\ \hline
 \multirow{8}{*}{\rotatebox{90}{$k = 5$}} &
 \multirow{2}{*}{\sf G}
    &$0.3122$    &$0.0795$    &$0.0017$    &$0.0000$    &$0.0000$ \\
   &&$0.4669$    &$0.2215$    &$0.0578$    &$0.0115$    &$0.0006$ \\ \cline{2-7}
 & \multirow{2}{*}{\sf G1}
    &$0.2992$    &$0.0781$    &$0.0016$    &$0.0000$    &$0.0000$ \\
   &&$0.4914$    &$0.2317$    &$0.0670$    &$0.0150$    &$0.0006$ \\ \cline{2-7}
 & \multirow{2}{*}{\sf R}
    &$0.2995$    &$0.0725$    &$0.0008$    &$0.0000$    &$0.0000$ \\
   &&$0.4756$    &$0.2257$    &$0.0603$    &$0.0120$    &$0.0008$ \\ \cline{2-7}
 & \multirow{2}{*}{\sf R1}
    &$0.2973$    &$0.0750$    &$0.0012$    &$0.0000$    &$0.0000$ \\
   &&$0.4847$    &$0.2361$    &$0.0666$    &$0.0166$    &$0.0012$ \\ \hline
 \multirow{8}{*}{\rotatebox{90}{$k = 10$}} &
 \multirow{2}{*}{\sf G}
    &$0.2779$    &$0.0285$    &$0.0000$    &$0.0000$    &$0.0000$ \\
   &&$0.4111$    &$0.1103$    &$0.0110$    &$0.0006$    &$0.0000$ \\ \cline{2-7}
 & \multirow{2}{*}{\sf G1}
    &$0.2872$    &$0.0332$    &$0.0000$    &$0.0000$    &$0.0000$ \\
   &&$0.4137$    &$0.1141$    &$0.0091$    &$0.0006$    &$0.0000$ \\ \cline{2-7}
 & \multirow{2}{*}{\sf R}
    &$0.2800$    &$0.0289$    &$0.0000$    &$0.0000$    &$0.0000$ \\
   &&$0.4069$    &$0.1072$    &$0.0089$    &$0.0005$    &$0.0000$ \\ \cline{2-7}
 & \multirow{2}{*}{\sf R1}
    &$0.2771$    &$0.0269$    &$0.0000$    &$0.0000$    &$0.0000$ \\
   &&$0.4144$    &$0.1003$    &$0.0097$    &$0.0004$    &$0.0000$ \\ \hline
\end{tabular}
\end{tabular}
$\ $ \\%
\begin{tabular}{cc} 
\!\!\!\!\!\!\!\!Frobenius norm of inverse of ACTIVSg2000 & Frobenius norm of inverse of ACTIVSg10K \\
 \small 
\!\!\!\!\!\!\!\!
\begin{tabular}{ccrrrrrr} \hline
& & \multicolumn{1}{c}{\!\!$\theta = 1.2$\!\!} & \multicolumn{1}{c}{$\theta = 2$} & \multicolumn{1}{c}{$\theta = 4$} & \multicolumn{1}{c}{$\theta = 8$} & \multicolumn{1}{c}{$\theta = 30$} \\ \hline
 \multirow{8}{*}{\rotatebox{90}{$k = 1$}} &
 \multirow{2}{*}{\sf G}
   &$0.2619$   &$0.1439$   &$0.0396$   &$0.0041$   &$0.0000$\\
&  &$0.6451$   &$0.5144$   &$0.3550$   &$0.2284$   &$0.0017$\\  \cline{2-7}
 & \multirow{2}{*}{\sf G1}
   &$0.1977$   &$0.1264$   &$0.0554$   &$0.0192$   &$0.0008$\\
&  &$0.7399$   &$0.6385$   &$0.4919$   &$0.3303$   &$0.0367$\\ \cline{2-7}
 & \multirow{2}{*}{\sf R}
   &$0.2687$   &$0.1507$   &$0.0390$   &$0.0028$   &$0.0000$\\
&  &$0.6352$   &$0.5044$   &$0.3502$   &$0.2189$   &$0.0004$\\ \cline{2-7}
 & \multirow{2}{*}{\sf R1}
   &$0.1922$   &$0.1202$   &$0.0519$   &$0.0159$   &$0.0002$\\
&  &$0.7370$   &$0.6151$   &$0.4414$   &$0.2808$   &$0.0007$\\ \hline
 \multirow{8}{*}{\rotatebox{90}{$k = 5$}} &
 \multirow{2}{*}{\sf G}
   &$0.2978$   &$0.0637$   &$0.0006$   &$0.0000$   &$0.0000$\\
&  &$0.4724$   &$0.2079$   &$0.0430$   &$0.0039$   &$0.0000$\\ \cline{2-7}
 & \multirow{2}{*}{\sf G1}
   &$0.2521$   &$0.1113$   &$0.0210$   &$0.0014$   &$0.0000$\\
&  &$0.6027$   &$0.3786$   &$0.1405$   &$0.0309$   &$0.0000$\\ \cline{2-7}
 & \multirow{2}{*}{\sf R}
   &$0.2949$   &$0.0635$   &$0.0005$   &$0.0000$   &$0.0000$\\
&  &$0.4668$   &$0.2064$   &$0.0426$   &$0.0036$   &$0.0000$\\ \cline{2-7}
 & \multirow{2}{*}{\sf R1}
   &$0.2503$   &$0.1025$   &$0.0183$   &$0.0013$   &$0.0000$\\
&  &$0.5962$   &$0.3607$   &$0.1124$   &$0.0149$   &$0.0000$\\ \hline
 \multirow{8}{*}{\rotatebox{90}{$k = 10$}} &
 \multirow{2}{*}{\sf G}
   &$0.2702$   &$0.0229$   &$0.0000$   &$0.0000$   &$0.0000$\\
&  &$0.4067$   &$0.0991$   &$0.0050$   &$0.0002$   &$0.0000$\\  \cline{2-7}
 & \multirow{2}{*}{\sf G1}
   &$0.2694$   &$0.0816$   &$0.0074$   &$0.0003$   &$0.0000$\\  
&  &$0.5263$   &$0.2398$   &$0.0415$   &$0.0018$   &$0.0000$\\ \cline{2-7}
 & \multirow{2}{*}{\sf R}
   &$0.2702$   &$0.0234$   &$0.0000$   &$0.0000$   &$0.0000$\\
&  &$0.4063$   &$0.0946$   &$0.0052$   &$0.0001$   &$0.0000$\\ \cline{2-7}
 & \multirow{2}{*}{\sf R1}
   &$0.2654$   &$0.0775$   &$0.0065$   &$0.0001$   &$0.0000$\\
&  &$0.5244$   &$0.2244$   &$0.0285$   &$0.0012$   &$0.0000$\\ \hline
\end{tabular}
\!\!&\!\!  \small \begin{tabular}{ccrrrrrr} \hline
& & \multicolumn{1}{c}{\!\!$\theta = 1.2$\!\!} & \multicolumn{1}{c}{$\theta = 2$} & \multicolumn{1}{c}{$\theta = 4$} & \multicolumn{1}{c}{$\theta = 8$} & \multicolumn{1}{c}{$\theta = 30$} \\ \hline
 \multirow{8}{*}{\rotatebox{90}{$k = 1$}} &
 \multirow{2}{*}{\sf G}
    &$0.2717$   &$0.1507$   &$0.0394$   &$0.0036$   &$0.0000$ \\
   &&$0.6389$   &$0.5135$   &$0.3560$   &$0.2129$   &$0.0195$ \\  \cline{2-7}
 & \multirow{2}{*}{\sf G1}
    &$0.2108$   &$0.1362$   &$0.0583$   &$0.0180$   &$0.0003$ \\
   &&$0.7271$   &$0.6289$   &$0.4922$   &$0.3469$   &$0.1060$ \\ \cline{2-7}
 & \multirow{2}{*}{\sf R}
    &$0.2750$   &$0.1480$   &$0.0360$   &$0.0027$   &$0.0000$ \\
   &&$0.6262$   &$0.4961$   &$0.3356$   &$0.1929$   &$0.0167$ \\ \cline{2-7}
 & \multirow{2}{*}{\sf R1}
    &$0.2135$   &$0.1231$   &$0.0435$   &$0.0122$   &$0.0001$ \\
   &&$0.6937$   &$0.5324$   &$0.3667$   &$0.2369$   &$0.0468$ \\ \hline
 \multirow{8}{*}{\rotatebox{90}{$k = 5$}} &
 \multirow{2}{*}{\sf G}
    &$0.2848$   &$0.0641$   &$0.0006$   &$0.0000$   &$0.0000$ \\
   &&$0.4787$   &$0.2116$   &$0.0415$   &$0.0038$   &$0.0000$ \\ \cline{2-7}
 & \multirow{2}{*}{\sf G1}
    &$0.2678$   &$0.1053$   &$0.0171$   &$0.0012$   &$0.0000$  \\
   &&$0.5802$   &$0.3629$   &$0.1340$   &$0.0313$   &$0.0001$ \\ \cline{2-7}
 & \multirow{2}{*}{\sf R}
    &$0.2869$   &$0.0538$   &$0.0002$   &$0.0000$   &$0.0000$ \\
   &&$0.4678$   &$0.1963$   &$0.0382$   &$0.0036$   &$0.0000$ \\ \cline{2-7}
 & \multirow{2}{*}{\sf R1}
    &$0.2615$   &$0.0947$   &$0.0118$   &$0.0011$   &$0.0000$ \\
   &&$0.5599$   &$0.2825$   &$0.0531$   &$0.0066$   &$0.0000$ \\ \hline
 \multirow{8}{*}{\rotatebox{90}{$k = 10$}} &
 \multirow{2}{*}{\sf G}
    &$0.2532$   &$0.0187$   &$0.0000$   &$0.0000$   &$0.0000$ \\
   &&$0.4246$   &$0.1004$   &$0.0035$   &$0.0001$   &$0.0000$ \\ \cline{2-7}
 & \multirow{2}{*}{\sf G1}
    &$0.2680$   &$0.0690$   &$0.0047$   &$0.0001$   &$0.0000$ \\
   &&$0.5135$   &$0.2339$   &$0.0400$   &$0.0027$   &$0.0000$ \\ \cline{2-7}
 & \multirow{2}{*}{\sf R}
    &$0.2514$   &$0.0148$   &$0.0000$   &$0.0000$   &$0.0000$ \\
   &&$0.4060$   &$0.0867$   &$0.0032$   &$0.0001$   &$0.0000$ \\ \cline{2-7}
 & \multirow{2}{*}{\sf R1}
    &$0.2502$   &$0.0569$   &$0.0037$   &$0.0000$   &$0.0000$ \\
   &&$0.5015$   &$0.1669$   &$0.0094$   &$0.0001$   &$0.0000$ \\ \hline
\end{tabular}
\end{tabular}

$\ $ \\%
\centering \begin{tabular}{cc} 
Trace of CFD & Trace of inverse of CFD \\
 \small
\begin{tabular}{ccrrrrrr} \hline
& & \multicolumn{1}{c}{\!\!$\theta = 1.2$\!\!} & \multicolumn{1}{c}{$\theta = 2$} & \multicolumn{1}{c}{$\theta = 4$} & \multicolumn{1}{c}{$\theta = 8$} &\!\!\!\!\!\!  \\ \hline
 \multirow{8}{*}{\rotatebox{90}{$k = 1$}} &
 \multirow{2}{*}{\sf G}
    &$0.0000$   &$0.0000$   &$0.0000$   &$0.0000$ \\
   &&$0.0000$   &$0.0000$   &$0.0000$   &$0.0000$ \\  \cline{2-7}
 & \multirow{2}{*}{\sf G1}
    &$0.0602$   &$0.0000$   &$0.0000$   &$0.0000$ \\
   &&$0.0826$   &$0.0000$   &$0.0000$   &$0.0000$ \\ \cline{2-7}
 & \multirow{2}{*}{\sf R}
    &$0.0000$   &$0.0000$   &$0.0000$   &$0.0000$ \\
   &&$0.0000$   &$0.0000$   &$0.0000$   &$0.0000$ \\ \cline{2-7}
 & \multirow{2}{*}{\sf R1}
    &$0.0000$   &$0.0000$   &$0.0000$   &$0.0000$ \\
   &&$0.0000$   &$0.0000$   &$0.0000$   &$0.0000$ \\ \hline
 \multirow{8}{*}{\rotatebox{90}{$k = 5$}} &
 \multirow{2}{*}{\sf G}
    &$0.0000$   &$0.0000$   &$0.0000$   &$0.0000$ \\
   &&$0.0000$   &$0.0000$   &$0.0000$   &$0.0000$ \\ \cline{2-7}
 & \multirow{2}{*}{\sf G1}
    &$0.0008$   &$0.0000$   &$0.0000$   &$0.0000$ \\
   &&$0.0006$   &$0.0000$   &$0.0000$   &$0.0000$ \\ \cline{2-7}
 & \multirow{2}{*}{\sf R}
    &$0.0000$   &$0.0000$   &$0.0000$   &$0.0000$ \\
   &&$0.0000$   &$0.0000$   &$0.0000$   &$0.0000$ \\ \cline{2-7}
 & \multirow{2}{*}{\sf R1}
    &$0.0000$   &$0.0000$   &$0.0000$   &$0.0000$ \\
   &&$0.0000$   &$0.0000$   &$0.0000$   &$0.0000$ \\ \hline
 \multirow{8}{*}{\rotatebox{90}{$k = 10$}} &
 \multirow{2}{*}{\sf G}
    &$0.0000$   &$0.0000$   &$0.0000$   &$0.0000$ \\
   &&$0.0000$   &$0.0000$   &$0.0000$   &$0.0000$ \\ \cline{2-7}
 & \multirow{2}{*}{\sf G1}
    &$0.0000$   &$0.0000$   &$0.0000$   &$0.0000$ \\
   &&$0.0000$   &$0.0000$   &$0.0000$   &$0.0000$ \\ \cline{2-7}
 & \multirow{2}{*}{\sf R}
    &$0.0000$   &$0.0000$   &$0.0000$   &$0.0000$ \\
   &&$0.0000$   &$0.0000$   &$0.0000$   &$0.0000$ \\ \cline{2-7}
 & \multirow{2}{*}{\sf R1}
    &$0.0000$   &$0.0000$   &$0.0000$   &$0.0000$ \\
   &&$0.0000$   &$0.0000$   &$0.0000$   &$0.0000$ \\ \hline
\end{tabular}
\!\!&\!\!  \small 
\begin{tabular}{ccrrrrrr} \hline
& & \multicolumn{1}{c}{\!\!$\theta = 1.2$\!\!} & \multicolumn{1}{c}{$\theta = 2$} & \multicolumn{1}{c}{$\theta = 4$} & \multicolumn{1}{c}{$\theta = 8$} &\!\!\!\!\!\! \\ \hline
 \multirow{8}{*}{\rotatebox{90}{$k = 1$}} &
 \multirow{2}{*}{\sf G}
    &$0.0666$   &$0.0003$   &$0.0000$   &$0.0000$ \\
   &&$0.0061$   &$0.0000$   &$0.0000$   &$0.0000$ \\  \cline{2-7}
 & \multirow{2}{*}{\sf G1}
    &$0.1306$   &$0.0147$   &$0.0008$   &$0.0001$ \\
   &&$0.2645$   &$0.0000$   &$0.0000$   &$0.0000$ \\ \cline{2-7}
 & \multirow{2}{*}{\sf R}
    &$0.0668$   &$0.0001$   &$0.0000$   &$0.0000$ \\
   &&$0.0047$   &$0.0000$   &$0.0000$   &$0.0000$ \\ \cline{2-7}
 & \multirow{2}{*}{\sf R1}
    &$0.1109$   &$0.0126$   &$0.0007$   &$0.0000$ \\
   &&$0.1596$   &$0.0000$   &$0.0000$   &$0.0000$ \\ \hline
 \multirow{8}{*}{\rotatebox{90}{$k = 5$}} &
 \multirow{2}{*}{\sf G}
    &$0.0040$   &$0.0000$   &$0.0000$   &$0.0000$ \\
   &&$0.0000$   &$0.0000$   &$0.0000$   &$0.0000$ \\ \cline{2-7}
 & \multirow{2}{*}{\sf G1}
    &$0.0732$   &$0.0007$   &$0.0000$   &$0.0000$ \\
   &&$0.0508$   &$0.0000$   &$0.0000$   &$0.0000$ \\ \cline{2-7}
 & \multirow{2}{*}{\sf R}
    &$0.0049$   &$0.0000$   &$0.0000$   &$0.0000$ \\
   &&$0.0000$   &$0.0000$   &$0.0000$   &$0.0000$ \\ \cline{2-7}
 & \multirow{2}{*}{\sf R1}
    &$0.0618$   &$0.0007$   &$0.0000$   &$0.0000$ \\
   &&$0.0052$   &$0.0000$   &$0.0000$   &$0.0000$ \\ \hline
 \multirow{8}{*}{\rotatebox{90}{$k = 10$}} &
 \multirow{2}{*}{\sf G}
    &$0.0002$   &$0.0000$   &$0.0000$   &$0.0000$ \\
   &&$0.0000$   &$0.0000$   &$0.0000$   &$0.0000$ \\ \cline{2-7}
 & \multirow{2}{*}{\sf G1}
    &$0.0344$   &$0.0001$   &$0.0000$   &$0.0000$ \\
   &&$0.0102$   &$0.0000$   &$0.0000$   &$0.0000$ \\ \cline{2-7}
 & \multirow{2}{*}{\sf R}
    &$0.0001$   &$0.0000$   &$0.0000$   &$0.0000$ \\
   &&$0.0000$   &$0.0000$   &$0.0000$   &$0.0000$ \\ \cline{2-7}
 & \multirow{2}{*}{\sf R1}
    &$0.0302$   &$0.0000$   &$0.0000$   &$0.0000$ \\
   &&$0.0001$   &$0.0000$   &$0.0000$   &$0.0000$ \\ \hline
\end{tabular}
\end{tabular}

$\ $ \\%
\centering \begin{tabular}{cc} 
Trace of inverse of Laplace & Frobenius norm of inverse of Convdiff \\
 \small
\begin{tabular}{ccrrrrrr} \hline
& & \multicolumn{1}{c}{\!\!$\theta = 1.2$\!\!} & \multicolumn{1}{c}{$\theta = 2$} & \multicolumn{1}{c}{$\theta = 4$} & \multicolumn{1}{c}{$\theta = 8$} &\!\!\!\!\!\! \\ \hline
 \multirow{8}{*}{\rotatebox{90}{$k = 1$}} &
 \multirow{2}{*}{\sf G}
    &$0.0996$   &$0.0006$   &$0.0000$   &$0.0000$    \\
   &&$0.0801$   &$0.0000$   &$0.0000$   &$0.0000$ \\  \cline{2-7}
 & \multirow{2}{*}{\sf G1}
    &$0.2521$   &$0.0488$   &$0.0030$   &$0.0000$  \\
   &&$0.4555$   &$0.0919$   &$0.0011$   &$0.0001$   \\ \cline{2-7}
 & \multirow{2}{*}{\sf R}
    &$0.0975$   &$0.0005$   &$0.0000$   &$0.0000$    \\
   &&$0.0718$   &$0.0000$   &$0.0000$   &$0.0000$    \\ \cline{2-7}
 & \multirow{2}{*}{\sf R1}
    &$0.2196$   &$0.0346$   &$0.0017$   &$0.0001$    \\
   &&$0.4267$   &$0.0145$   &$0.0000$   &$0.0000$    \\ \hline
 \multirow{8}{*}{\rotatebox{90}{$k = 5$}} &
 \multirow{2}{*}{\sf G}
    &$0.0096$   &$0.0000$   &$0.0000$   &$0.0000$    \\
   &&$0.0004$   &$0.0000$   &$0.0000$   &$0.0000$    \\ \cline{2-7}
 & \multirow{2}{*}{\sf G1}
    &$0.1764$   &$0.0042$   &$0.0000$   &$0.0000$    \\
   &&$0.2590$   &$0.0002$   &$0.0000$   &$0.0000$    \\ \cline{2-7}
 & \multirow{2}{*}{\sf R}
    &$0.0083$   &$0.0000$   &$0.0000$   &$0.0000$    \\
   &&$0.0001$   &$0.0000$   &$0.0000$   &$0.0000$    \\ \cline{2-7}
 & \multirow{2}{*}{\sf R1}
    &$0.1483$   &$0.0021$   &$0.0000$   &$0.0000$    \\
   &&$0.1857$   &$0.0000$   &$0.0000$   &$0.0000$    \\ \hline
 \multirow{8}{*}{\rotatebox{90}{$k = 10$}} &
 \multirow{2}{*}{\sf G}
    &$0.0006$   &$0.0000$   &$0.0000$   &$0.0000$    \\
   &&$0.0000$   &$0.0000$   &$0.0000$   &$0.0000$    \\ \cline{2-7}
 & \multirow{2}{*}{\sf G1}
    &$0.1362$   &$0.0003$   &$0.0000$   &$0.0000$    \\
   &&$0.1464$   &$0.0000$   &$0.0000$   &$0.0000$    \\ \cline{2-7}
 & \multirow{2}{*}{\sf R}
    &$0.0005$   &$0.0000$   &$0.0000$   &$0.0000$    \\
   &&$0.0000$   &$0.0000$   &$0.0000$   &$0.0000$    \\ \cline{2-7}
 & \multirow{2}{*}{\sf R1}
    &$0.0970$   &$0.0000$   &$0.0000$   &$0.0000$    \\
   &&$0.0865$   &$0.0000$   &$0.0000$   &$0.0000$    \\ \hline
\end{tabular}
\!\!&\!\!  \small 
\begin{tabular}{ccrrrrrr} \hline
& & \multicolumn{1}{c}{\!\!$\theta = 1.2$\!\!} & \multicolumn{1}{c}{$\theta = 2$} & \multicolumn{1}{c}{$\theta = 4$} & \multicolumn{1}{c}{$\theta = 8$} & \multicolumn{1}{c}{$\theta = 30$} \\ \hline
 \multirow{8}{*}{\rotatebox{90}{$k = 1$}} &
 \multirow{2}{*}{\sf G}
    &$0.2679$   &$0.1042$   &$0.0144$   &$0.0006$   &$0.0000$ \\
   &&$0.5677$   &$0.2909$   &$0.0343$   &$0.0002$   &$0.0000$ \\  \cline{2-7}
 & \multirow{2}{*}{\sf G1}
    &$0.2218$   &$0.1232$   &$0.0456$   &$0.0121$   &$0.0001$ \\
   &&$0.6945$   &$0.5452$   &$0.3344$   &$0.1575$   &$0.0093$ \\ \cline{2-7}
 & \multirow{2}{*}{\sf R}
    &$0.2666$   &$0.1034$   &$0.0136$   &$0.0004$   &$0.0000$ \\
   &&$0.5697$   &$0.2954$   &$0.0390$   &$0.0003$   &$0.0000$ \\ \cline{2-7}
 & \multirow{2}{*}{\sf R1}
    &$0.2165$   &$0.1200$   &$0.0443$   &$0.0115$   &$0.0004$ \\
   &&$0.6901$   &$0.5364$   &$0.3246$   &$0.1453$   &$0.0062$ \\ \hline
 \multirow{8}{*}{\rotatebox{90}{$k = 5$}} &
 \multirow{2}{*}{\sf G}
    &$0.2584$   &$0.0188$   &$0.0000$   &$0.0000$   &$0.0000$ \\
   &&$0.3684$   &$0.0330$   &$0.0000$   &$0.0000$   &$0.0000$ \\ \cline{2-7}
 & \multirow{2}{*}{\sf G1}
    &$0.2665$   &$0.0919$   &$0.0125$   &$0.0004$   &$0.0000$ \\
   &&$0.5517$   &$0.2783$   &$0.0455$   &$0.0017$   &$0.0000$ \\ \cline{2-7}
 & \multirow{2}{*}{\sf R}
    &$0.2593$   &$0.0182$   &$0.0000$   &$0.0000$   &$0.0000$ \\
   &&$0.3731$   &$0.0372$   &$0.0000$   &$0.0000$   &$0.0000$ \\ \cline{2-7}
 & \multirow{2}{*}{\sf R1}
    &$0.2728$   &$0.0909$   &$0.0120$   &$0.0007$   &$0.0000$ \\
   &&$0.5371$   &$0.2570$   &$0.0406$   &$0.0010$   &$0.0000$ \\ \hline
 \multirow{8}{*}{\rotatebox{90}{$k = 10$}} &
 \multirow{2}{*}{\sf G}
    &$0.2198$   &$0.0031$   &$0.0000$   &$0.0000$   &$0.0000$ \\
   &&$0.2696$   &$0.0023$   &$0.0000$   &$0.0000$   &$0.0000$ \\ \cline{2-7}
 & \multirow{2}{*}{\sf G1}
    &$0.2763$   &$0.0688$   &$0.0039$   &$0.0001$   &$0.0000$ \\
   &&$0.4686$   &$0.1431$   &$0.0066$   &$0.0000$   &$0.0000$ \\ \cline{2-7}
 & \multirow{2}{*}{\sf R}
    &$0.2268$   &$0.0033$   &$0.0000$   &$0.0000$   &$0.0000$ \\
   &&$0.2641$   &$0.0037$   &$0.0000$   &$0.0000$   &$0.0000$ \\ \cline{2-7}
 & \multirow{2}{*}{\sf R1}
    &$0.2752$   &$0.0634$   &$0.0036$   &$0.0000$   &$0.0000$ \\
   &&$0.4618$   &$0.1340$   &$0.0033$   &$0.0000$   &$0.0000$ \\ \hline
\end{tabular}
\end{tabular}


\end{document}